\documentclass{amsart}%
\usepackage{amsmath}
\usepackage{amsfonts}
\usepackage{amssymb}
\usepackage{graphicx}%
\usepackage{color}
\setcounter{MaxMatrixCols}{30}
\usepackage{amsfonts}%
\usepackage{amsmath}%

\usepackage{amscd}
\usepackage{amsthm}
\usepackage{epsfig}
\usepackage{amstext}
\usepackage[all]{xy}
\usepackage{graphicx}%
\setcounter{MaxMatrixCols}{30}%
\usepackage{amssymb}%
\usepackage{graphicx}

\usepackage{makecell}

\providecommand{\U}[1]{\protect\rule{.1in}{.1in}}
\newtheorem{theorem}{Theorem}[section]

\newtheorem{corollary}[theorem]{Corollary}

\newtheorem{definition}[theorem]{Definition}

\newtheorem{lemma}[theorem]{Lemma}

\newtheorem{proposition}[theorem]{Proposition}
\newtheorem{remark}[theorem]{Remark}

\begin{document}
\title{Central amalgamation of groups and the RFD property}

\date{\today}

\author{Tatiana Shulman}

\address{Tatiana Shulman
\newline Institute of Mathematics of the Polish Academy of Sciences, Poland.}
\email{tshulman@impan.pl}

\subjclass[2010]{46L05, 46L09, 47A67, 20E26}


\begin{abstract} It is an old and challenging topic to investigate for which discrete groups $G$ the full group C*-algebra $C^*(G)$ is residually finite-dimensional (RFD). In particular not much is known about how the RFD property behaves under fundamental constructions, such as amalgamated free products and HNN-extensions.
In [CS19] it was proved that central amalgamated free products of virtually abelian groups are RFD. In this paper we prove that this holds much beyond this case.  Our method is based on showing  a   certain approximation property for characters induced from central subgroups. In particular it allows us to prove that free products of polycyclic-by-finite groups amalgamated over finitely generated central subgroups are RFD.

On the other hand we prove that the class of RFD C*-algebras (and groups) is not closed under central amalgamated free products. Namely  we give an example of RFD groups (in fact finitely generated amenable RF groups) whose central amalgamated free product is not RFD, moreover it is  not even maximally almost periodic. This answers a question of Khan and Morris [KM82].

\end{abstract}

\maketitle

\section{Introduction}

Let $G$ be a discrete group. When can we say that $G$ has many finite-dimensional representations? One way is to require that

\medskip

{\it finite-dimensional representations separate points of $G$}.

\medskip

\noindent In this case $G$ is called {\it maximally almost periodic} (MAP) which for finitely generated groups
is equivalent to being {\it residually finite} (RF).
Another way is to require for the full group C*-algebra $C^*(G)$ that

\medskip

{\it finite-dimensional representations separate points of $C^*(G)$}.

\medskip

\noindent Groups with this property are called {\it residually finite-dimensional} (RFD).

These two options are not the same. While RFD is clearly not weaker than MAP (since $G$ embeds into $C^*(G)$), they are not equivalent. In \cite{Bekka} Bekka constructed the first examples of groups   which are MAP but not RFD and later proved that $SL_3(\mathbb Z)$ is also such an example (\cite{Bekka2}). On the other hand for amenable groups these two notions coincide (\cite{BekkaLouvet}).  For some MAP groups proving/disproving RFD  is extremely hard -- as proved by Kirchberg (\cite{Kirchberg}, see also [\cite{CourtneySherman}, Th. 6.7]) the famous Connes Embedding Problem is equivalent to the question of whether or not $F_2\times F_2$ is RFD (while it is well known to be MAP).

Even besides the Connes Embedding Problem, exploring the RFD property for various classes of groups is a topic of much attention as it is important for our understanding of unitary
duals of groups (see e.g. \cite{LZuk}, \cite{LS}, \cite{Valette}). Moreover finding new examples of RFD groups has also become of interest  due to its relevance with problems of finding decidability algorithms for groups \cite{FNT}.

So far not many non-amenable examples of RFD groups are known (some examples can be found in \cite{ChoiFree},  \cite{LS},  \cite{DonTanya2}), so that amenable MAP groups remain the "main source" of groups with this property. For finding new non-amenable RFD groups it is therefore important to study how the RFD property behaves under group-theoretical constructions, such as amalgamated free products and HNN-extensions.

There are many examples of general  RFD  (not necessarily group) C*-algebras. Moreover some important problems about C*-algebras can be reduced to the case of RFD C*-algebras (see e.g. \cite{Dadarlat}). In this sense we can say that RFD C*-algebras form a rich class.  However when it comes to proving permanence properties for this class, it quickly becomes challenging, which  for instance can be seen in the study of how the RFD property behaves under amalgamated free products of C*-algebras.   In a way this is parallel to the study of how the RF property of finitely generated groups behaves under amalgamation of groups, which was a topic of great attention of group theorists for decades (see the survey \cite{survey}).
 It is interesting  to compare  how developments have proceeded so far on both sides.
In group theory the first result of this kind is the residual finiteness (RF) of finitely generated free groups proved in 1930 by Levi \cite{Levi}. Correspondingly the RFD property for free groups was proved by Choi in \cite{ChoiFree}. In 50s Gruenberg proved  that a free
  product of RF groups is RF \cite{Gru}.     On the C*-algebraic side, a question of whether free products of RFD C*-algebras
   (and hence groups) is RFD was open for long time, with
 only partial results known, until in 2008 it was solved in the positive by Exel and Loring \cite{ExelLoring}. Further, Baumslag
 showed that free products of RF groups amalgamated over a finite subgroup are RF \cite{Baumslag}.
 The corresponding result for RFD  C*-algebras fails.
 Nonetheless, necessary and sufficient conditions have been given for when amalgamated products of two separable RFD C*-algebras
 amalgamated over finite-dimensional C*-subalgebras are RFD, first for matrix algebras by Brown and Dykema  \cite{BD},
 then for finite-dimensional C*-algebras by Armstrong, Dykema, Exel, and Li \cite{ADEL}, and finally for all separable RFD
 C*-algebras by Li and Shen  \cite{LiShen}.

  Moving beyond finite amalgamating
subgroups, group theorists found a plethora of cases when amalgamated free products of RF groups are RF.
On the C*-side, the first and, until recently, only result involving an infinite-dimensional amalgamating
C*-subalgebra was a result of Korchagin stating that amalgamated free products of commutative C*-algebras are RFD \cite{Korchagin}.
On the other hand,  in \cite{Baumslag}, Baumslag proved that the amalgamated product of polycyclic groups over a  subgroup that is central in one of the groups must be RF, and in \cite{KhanMorris} Khan and Morris proved that the amalgamated product of two MAP topological groups over a common compact central subgroup is MAP.  These results, and especially Korchagin's result, indicated that the next natural class of amalgamated free products of C*-algebras and groups to
 test for being RFD is the central ones, that is the ones  amalgamated  over central C*-subalgebras (subgroups, respectively).

 In the recent work
 \cite{KristinTanya} K. Courtney and the author  proved that a central amalgamated free product of C*-algebras is RFD provided
 that the C*-algebras are {\it strongly RFD} meaning that all their quotients are RFD. (Warning: for group C*-algebras it is not the same as all quotients of the group being RFD).  For group C*-algebras strong RFD is a rare property (see a discussion of this property for groups in \cite{KristinTanya}). Therefore it is important to investigate how essential the requirement of strong RFD actually is. In particular whether it is in fact  necessary in the case when a central amalgamating subgroup is non-trivial.


 In this paper we prove that the strong RFD assumption is not necessary at all and that the RFD property is preserved by central amalgamated free products of a much bigger classes of groups.
For that we introduce a new technique which
   is based on obtaining two  results on full group C*-algebras, both of independent interest. The first one gives, under certain assumptions,  a simultaneous approximation of traces induced from a common central subgroup,
 by traces of finite-dimensional representations. Approximation of traces on (group) C*-algebras by the usual, that is matricial, traces  often plays an important role.  In particular approximation of traces by traces of finite-dimensional approximate representations is one of the key components in the classification programm (\cite{TWW}, \cite{Sch}, \cite{Gabe}).    Approximation of traces by traces of actual finite-dimensional
 representations is important at the study of the Hilbert-Schmidt stability of C*-algebras and groups (\cite{HadShTracial}, \cite{DonTanya2}). Thus the simultaneous approximation result which we obtain in this paper has potential applications at the study of the Hilbert-Schmidt stability of amalgamated free products. In this paper we apply it to study the RFD-property. The second auxiliary result we prove is a certain   generalization of the fact that for amenable groups their reduced and full C*-algebras coincide. This result will allow us to use arguments involving Voiculescu' Theorem to show that certain representations obtained by the GNS-construction separate points of central amalgamated free products of full C*-algebras of amenable groups.

Here is a particular case of our results on the RFD property of amalgamated free products:

\medskip

\noindent {\bf Theorem.}  {\it Let $G_1$ and $G_2$ be polycycic-by-finite groups and let $C$ be a central subgroup in both. Then the amalgamated free product $G_1\ast_C G_2$ is RFD.}

\medskip

The same technique allows us to prove similar results for central HNN-extensions, that is for HNN-extensions relative to the identity isomorphism of   an associated central subgroup, which are exactly those HNN-extensions in which the associated subgroup is central. In fact we give here a complete characterization of when a central HNN-extension of a finitely generated amenable group  relative to a finitely generated central subgroup is RFD.

\medskip

\noindent {\bf Theorem}: {\it Let $G$ be a finitely generated amenable group and let $C$ be its finitely generated central subgroup. Then the following are equivalent:

(i) the HNN-extension $\langle G, t\;|\; t^{-1}Ct = C, \; id\rangle$ is RF;

(ii) there exists a $C$-filtration of $G$;

(iii) the HNN-extension $\langle G, t\;|\; t^{-1}Ct = C, \; id\rangle$ is RFD.}

\medskip

Nevertheless, we show that quite surprisingly  in general  central amalgamated free
products of  RFD C*-algebras, even of RFD groups, need not be RFD.

\medskip

\noindent {\bf Theorem.} {\it There exists a finitely-generated amenable RFD group $G$ and its central subgroup $C$ such that $G\ast_C G$ is not MAP, and hence is not RF and is not RFD.}

\medskip

Recall that for locally compact groups maximal almost periodicity (MAP) is defined in the same way as for discrete ones with the only change that representations are assumed to be continuous (e.g. \cite{KhanMorris}, \cite{Spronk}). In [\cite{KhanMorris}, p. 428] there was posed a question of whether
 free products of locally compact MAP groups amalgamated over a common closed central subgroup are MAP. Since discrete groups are locally compact and all their subgroups are closed, our theorem above answers this question in the negative.

\medskip

The paper is organized as follows. Section 2 is preliminary and contains necessary information on C*-algebras and groups.
In section 3 we prove auxiliary results on existence of compatible filtrations  for RF groups with a common subgroup.    In section 4 we prove the mentioned above approximation property for traces, which will be (one of) the main ingredient(s) for obtaining positive results on RFD property of central amalgamated free products. Section 5 contains another mentioned above result which generalizes the fact that for amenable groups their reduced and full C*-algebras coincide.
Section 6 contains our main positive results on residual finite-dimensionality of central amalgamated free products. Section 7 is devoted to HNN-extensions. It contains the mentioned above characterization of when a central HNN-extension of a finitely generated amenable group  relative to a finitely generated central subgroup is RFD. Besides that it contains a result which relates the RFD property of a free product of a C*-algebra (or a group) with itself amalgamated over a general, not necessarily central, C*-subalgebra  (a subgroup, respectively) with the RFD property of the corresponding HNN-extension. In section 8 we prove our negative results for RFD property of central amalgamated free products.

\medskip

\textbf{Acknowledgements.}
The author is grateful to Alex Chirvasitu for useful comments on the first version of this paper.
The author was supported  by the grant H2020-MSCA-RISE-2015-691246-Quantum Dynamics and by the Polish National Science Center 
grant under the contract number 2019/34/E/ST1/00178.

\section{Preliminaries}
In this paper all groups are countable discrete.
By a representation of a group we always mean a {\it unitary} representation.

\subsection{MAP, RF, RFD}

 \begin{definition} A group $G$ is maximally periodic (MAP) if finite-dimensional representations of $G$ separate the points of $G$.
 \end{definition}

 \begin{definition} A group $G$ is residually finite (RF) if homomorphisms from $G$ to finite groups separate the points of $G$.
 \end{definition}

 \begin{definition} A C*-algebra $A$ is residually finite-dimensional (RFD) if finite-dimensional representations of $A$ separate the points of $A$.
 \end{definition}

 Recall that with a discrete group $G$ one can associate its {\it full C*-algebra}, $C^*(G)$. Namely let $\mathbb CG$ denote the group algebra of $G$. Let $A$ be a $C^*$-algebra and  let $\pi$ be a homomorphism from $G$ to the unitary group $U(A)$ of $A$.  Then $\pi$
induces a $\ast$-homomorphism $\pi : \mathbb CG \to A$. The full C*-algebra $C^*(G)$ is
the completion of $\mathbb C$G with respect to the norm
$$\|a\| := \sup\{\|\pi(a)\|\;|\; \pi: G \to U(A) \;\text{is a homomorphism}\}.$$
The C*-algebra $C^*(G)$ has the following universal property: Given a C*-algebra $A$ and a
unitary representation $\pi : G \to U(A)$, there exists a unique $\ast$-homomorphism
$\tilde \pi : C^*(G) \to A$  that satisfies $\tilde \pi(\delta(g)) = \pi(g)$, for every $g\in G$ (here $\delta: G \to \mathbb C G$ is
the canonical embedding).

\medskip

 \noindent One usually calls a group $G$ RFD if $C^*(G)$ is RFD. Clearly we have the following implications:
 $$G \;\text{ is RFD}\; \Rightarrow \; G \; \text{ is MAP}, \;\; G\; \text{ is RF}\; \Rightarrow \; G \; \text{ is MAP}.$$
  The opposite implications are not true in general.



\noindent The following proposition is folklore.

\begin{proposition}\label{MAPhenceRF} A finitely generated MAP group is RF.
\end{proposition}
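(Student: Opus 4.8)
The plan is to deduce the statement from Malcev's theorem that finitely generated linear groups are residually finite. Fix $g \in G$ with $g \neq e$; it suffices to produce a homomorphism from $G$ to a finite group whose value at $g$ is nontrivial. By the MAP hypothesis there is a finite-dimensional unitary representation $\pi \colon G \to U(n)$ with $\pi(g) \neq I_n$, and since $G$ is finitely generated, its image $H := \pi(G)$ is a finitely generated subgroup of $GL_n(\mathbb{C})$.

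Next I would apply Malcev's theorem to $H$: it is residually finite, so there is a homomorphism $q$ from $H$ to a finite group $F$ with $q(\pi(g)) \neq e$. Then $q \circ \pi \colon G \to F$ does the job, and letting $g$ range over $G \setminus \{e\}$ shows that $G$ is RF. If one prefers a self-contained argument, the Malcev step can be replaced by the following classical reasoning: let $R \subseteq \mathbb{C}$ be the subring generated by the matrix entries of a finite generating set of $H$ together with the entries of their inverses, so that $H \leq GL_n(R)$ and $R$ is a finitely generated commutative domain; pick a nonzero entry $r$ of the matrix $\pi(g) - I_n$; since $R$ is a finitely generated $\mathbb{Z}$-algebra, there is a maximal ideal $\mathfrak{m} \subseteq R$ with $r \notin \mathfrak{m}$ and $R/\mathfrak{m}$ finite; reduction modulo $\mathfrak{m}$ gives a homomorphism $GL_n(R) \to GL_n(R/\mathfrak{m})$ whose restriction to $H$ has finite image and does not send $\pi(g)$ to the identity, because $\pi(g) - I_n$ still has a nonzero entry after reduction.

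The only non-routine ingredient is Malcev's theorem (equivalently, the Nullstellensatz-type fact that a finitely generated $\mathbb{Z}$-algebra has finite residue fields at its maximal ideals and that these maximal ideals avoid any prescribed nonzero element); the remaining steps are bookkeeping. The hypothesis that $G$ is finitely generated enters exactly once and is essential: it is what makes $\pi(G)$ finitely generated so that Malcev applies. Without finite generation the proposition fails — for instance the Prüfer group $\mathbb{Z}(p^\infty)$ is MAP by Pontryagin duality but admits no nontrivial finite quotient, hence is not RF.
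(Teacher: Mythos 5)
Your argument is correct and follows essentially the same route as the paper: pick a finite-dimensional representation $\pi$ separating $g$ from $e$, note that $\pi(G)$ is a finitely generated linear group, apply Malcev's theorem to get a finite quotient detecting $\pi(g)$, and compose. The extra sketch of Malcev via reduction modulo a maximal ideal and the Pr\"ufer group counterexample are fine additions but do not change the substance.
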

\begin{proof} Let $e\neq g\in G$. Since $G$ is MAP, there is a finite-dimensional representation $\pi$ of $G$ such that $\pi(g)\neq 1$. Since $\pi(G)$ is a finitely generated linear group, by Malcev Theorem (\cite{Malcev}, Th. VII) it is RF. Therefore there exists a homomorphism $f$ from $\pi(G)$ to a finite group such that $f(\pi(g))\neq e$. Thus the homomorphism $f\circ \pi$ to a finite group is not trivial on $g$.
\end{proof}

\subsection{Amalgamated free products and  HNN-extensions.}

Recall that if $\phi_A : C \to A, \phi_B : C \to B$ are unital injective $\ast$-homomorphisms of unital C*-algebras then their {\it amalgamated free product}
$A\ast_C B$  is a unital C*-algebra
with the following properties:

1)  There exist unital $\ast$-homomorphisms $i_A : A \to A\ast_C B$  and $i_B : B \to A\ast_C B$
 such that the
following diagram
\[
\xymatrix{C \ar[d]_{\phi_B} \ar[r]^{\phi_A} & A \ar[d]^{i_A}\\
 B  \ar[r]_{i_B} & A\ast_C B}
 \]
 is commutative.

 2) For any unital C*-algebra $D$ and any commutative diagram \[
\xymatrix{C \ar[d]_{\phi_B} \ar[r]^{\phi_A} & A \ar[d]^{\pi_A}\\
 B  \ar[r]_{\pi_B} & D}
 \]
 there is a unique $\ast$-homomorphism $\sigma : A\ast_C B \to D$ such that $\sigma \circ i_A = \pi_A$  and
$\sigma \circ i_B = \pi_B$.

Such C*-algebra exists and is unique up to isomorphism. It is easy to show that $i_A$ and $i_B$ are in fact injective, so when it does not lead to a confusion we assume $A, B \subset  A\ast_C B$.

\medskip

Let $A$ be a unital $C^*$-algebra, $B$ and $C$ its C*-subalgebras, and $\phi: B\to C$ an isomorphism. The corresponding {\it HNN-extension}
$\langle A, t\;|\; t^{-1}Bt = C, \; \phi\rangle$ is a unital C*-algebra with the following properties:

1) There exists a unital $\ast$-homomorphism $i_A: A \to \langle A, t\;|\; t^{-1}Bt = C, \; \phi\rangle$ and a unitary $t\in \langle A, t\;|\; t^{-1}Bt = C, \; \phi\rangle$ such that
$$t^{-1}i_A(b)t= i_A(\phi(b)),$$ for any $b\in B$.

2) For any unital C*-algebra $D$ and any unitary $u\in D$ and unital $\ast$-homomorphism $\pi: A \to D$ such that $u^{-1}\pi(b)u = \pi(\phi(b))$, for any $b\in B$, there is a unique $\ast$-homomorphism $\sigma: \langle A, t\;|\; t^{-1}Bt = C, \; \phi\rangle \to D$ such that
$\sigma\circ i_A = \pi, \; \sigma(t) = u.$

Such C*-algebra exists and is unique up to isomorphism.

More information on amalgamated free products and HNN-extensions of C*-algebras can be found in e.g.  \cite[section 2.3]{Pedersen} and \cite{Ueda}.


\medskip

In the case of full group C*-algebras these definitions agree with the group-theoretical definitions of amalgamated free products and HNN-extensions. Namely  (see e.g. \cite[Lemma 3.1 and Lemma 3.4]{SorenTanyaAdam})
$$C^*(G_1\ast_{H} G_2) = C^*(G_1)\ast_{C^*(H)} C^*(G_2)$$ and
$$C^*\langle G, t\;|\; t^{-1}Ht = K, \; \phi\rangle = \langle C^*(G), t\;|\; t^{-1}C^*(H)t = C^*(K), \; \phi\rangle.$$

Sometimes, when it is important to emphasize an isomorphism between the amalgamating subgroups, we will write $G_1\ast_{H_1\cong^{\phi} H_2} G_2$   rather than $G_1 \ast_H G_2$.

\subsection{Positive-definite functions and characters, states and traces.}

\begin{definition} A function $\phi: G \to \mathbb C$ is positive definite if the matrix $$[\phi(s^{-1}t)]_{s, t\in F} \in M_F(\mathbb C)$$ is positive for every finite set $F\subset G$.
\end{definition}

\begin{definition} A character is a positive definite function that is constant on conjugacy classes.
\end{definition}

 It is well-known (\cite{BrownOzawa}, p.47) that a positive definite function $\phi: G \to \mathbb C$
 extends to a positive linear functional on the full group C*-algebra $C^*(G)$ and this correspondence is 1-to-1. Similarly characters on $G$ are in 1-to-1 correspondence with traces on $C^*(G)$.
In this paper a trace on $C^*(G)$ and the corresponding character of $G$ (a positive linear functional on $C^*(G)$ and the corresponding positive definite function on $G$, respectively) will be denoted by the same letter.



\medskip

\noindent Suppose $\lambda$ is a positive-definite function on a subgroup $C$.
Let $\tilde\lambda(g) = \begin{cases} \lambda(g), g\in C \\ 0, g\in G\setminus C \end{cases}$.
The following proposition is probably well-known but we include its proof for readers convenience.

\begin{proposition} Let $G$ be a group and let $C$ be its subgroup. If $\lambda$ is a positive-definite function on $C$, then  $\tilde \lambda$ is a positive definite function on $G$. If $C$ is central, then $\tilde \lambda$ is a character of $G$.
\end{proposition}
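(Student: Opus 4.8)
The plan is to verify the defining inequality of positive-definiteness directly from the corresponding inequality for $\lambda$ on $C$. Fix a finite set $F = \{g_1, \dots, g_n\} \subset G$ and complex scalars $c_1, \dots, c_n$; I must show $\sum_{i,j} \overline{c_i} c_j \,\tilde\lambda(g_i^{-1} g_j) \geq 0$. The key observation is that $\tilde\lambda(g_i^{-1}g_j)$ is nonzero only when $g_i^{-1}g_j \in C$, i.e.\ only when $g_i$ and $g_j$ lie in the same left coset of $C$. So the matrix $[\tilde\lambda(g_i^{-1}g_j)]$ is block-diagonal after grouping the elements of $F$ by their coset $g_iC$, and it suffices to prove positivity of each block. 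Within a single coset, write $g_i = g\, h_i$ with $g$ a fixed coset representative and $h_i \in C$; then $g_i^{-1} g_j = h_i^{-1} h_j \in C$, so the block equals $[\lambda(h_i^{-1}h_j)]_{i,j}$, which is positive by hypothesis that $\lambda$ is positive-definite on $C$. Hence each block, and therefore the whole matrix, is positive semidefinite, proving $\tilde\lambda$ is positive-definite on $G$.

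For the second claim, assume $C$ is central. Since a character is by definition a positive-definite function constant on conjugacy classes, and we have just shown $\tilde\lambda$ is positive-definite, it remains only to check that $\tilde\lambda$ is a class function. Take $g \in G$ and $x \in G$; I need $\tilde\lambda(x^{-1}gx) = \tilde\lambda(g)$. If $g \in C$, then $x^{-1}gx = g$ because $C$ is central, so both sides equal $\lambda(g)$. If $g \notin C$, then $x^{-1}gx \notin C$ as well (conjugation is an automorphism of $G$ preserving the normal subgroup $C$; alternatively, if $x^{-1}gx \in C$ then $g = x(x^{-1}gx)x^{-1} \in C$, a contradiction), so both sides equal $0$. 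Thus $\tilde\lambda$ is constant on conjugacy classes, hence a character of $G$.

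The only mildly delicate point is the block-diagonal reduction in the first part: one must be careful that the decomposition of $F$ into coset-classes is genuine and that the off-block entries really do vanish — this is exactly where centrality is \emph{not} needed (only for the ordinary positive-definiteness statement), so it is worth keeping the two halves of the argument cleanly separated. I expect no real obstacle; the argument is elementary once one notices the coset-block structure.
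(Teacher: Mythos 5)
Your proof is correct and follows essentially the same route as the paper: both reduce positive-definiteness of $\tilde\lambda$ to that of $\lambda$ by observing that $\tilde\lambda(g^{-1}h)$ vanishes unless $g,h$ lie in the same left coset of $C$, so the relevant quadratic form splits over cosets, and both handle the character claim by the same two-case conjugation argument using centrality. No gaps; the block-diagonal phrasing is just a repackaging of the paper's coset-wise summation.
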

\begin{proof}
Let $g_1C, g_2C, \ldots$ be the cosets for $C$. Then $G = g_1C\sqcup g_2C \sqcup \ldots$. Then  $g^{-1}h\in C$ is and only if $g, h$ belong to the same coset. Let $t_g\in \mathbb C,$ for $g\in G$. We have
\begin{multline*} \sum_{g, h}\tilde\lambda(g^{-1}h) t_g\overline{t_h} = \sum_{g^{-1} h\in C}\lambda(g^{-1}h) t_g\overline{t_h}\\
= \sum_i\sum_{g, h\in g_iC} \lambda(g^{-1}h) t_g\overline{t_h} = \sum_i\sum_{c, c'\in C} \lambda(c'^{-1}c)t_{g_ic'}\overline{t_{g_ic}} \ge 0.\end{multline*}
This means that $\tilde\lambda$ is positive definite.

Now assume that $C$ is central.
       Let $g, h\in G$. If $h\in C$, then $ghg^{-1}=h$ and $\tilde\lambda(ghg^{-1}) = \tilde\lambda(h).$
       If $h\notin C$, then $ghg^{-1}\notin gCg^{-1} = C$ and we have $\tilde\lambda(ghg^{-1}) = 0 =\tilde\lambda(h).$
  Thus $\tilde\lambda$ is constant on conjugacy classes.
\end{proof}

\noindent Throughout this paper a homomorphism from a group $G$ to the unit circle $\{|z|=1\}$ will be called a one-dimensional representation of $G$.


\subsection{GNS-construction.}

 Let $\tau$ be a positive linear functional on a C*-algebra $A$. Let
 $$N_{\tau} = \{a\in A\;|\; \tau(a^*a)=0\}.$$
 It is easy to check that $N_{\tau}$ is a closed left ideal of $A$ and that the map
 $$(a+N_{\tau}, b+N_{\tau}) \mapsto \tau(b^*a)$$ is a well-defined inner product on $A/N_{\tau}$.
 Let $H_{\tau}$ be the Hilbert completion of $A/N_{\tau}$.

For any $a\in A$, we define an operator
$$b+N_{\tau} \mapsto ab + N_{\tau}$$ on $A/N_{\tau}$. This operator has a unique extension to a bounded operator
$\Lambda_{\tau}(a)$ on $H_{\tau}$. The map
$$\Lambda_{\tau}: A \to B(H_{\tau}), \; a\mapsto \Lambda_{\tau}(a),$$ is a $\ast$-homomorphism called the {\it Gelfand-Naimark-Segal representation} (or $GNS$-representation) of $A$ associated with $\tau$.


The following lemma follows easily from the GNS-construction.

\begin{lemma}\label{basis} Let $G$ be a group and $C$ be its subgroup  and let $\lambda$ be a positive-definite function on  $C$. Let $g_1, g_2, \ldots$ be representatives of left cosets of $C$. Then  $\{g_i+N_{\tilde\lambda}\}$, $i\in \mathbb N$, is an orthonormal basis in the space $H_{\tilde\lambda}$ of the GNS-representation $\Lambda_{\tilde\lambda}$ of $C^*(G)$.
\end{lemma}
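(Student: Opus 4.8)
The plan is to read the statement directly off the GNS construction applied to the positive linear functional on $C^{*}(G)$ determined by $\tilde\lambda$. By the preceding proposition $\tilde\lambda$ is a positive-definite function on $G$, hence extends to a positive linear functional on $C^{*}(G)$, so $N_{\tilde\lambda}\subset C^{*}(G)$, $H_{\tilde\lambda}$ and $\Lambda_{\tilde\lambda}$ are as in the text, with inner product on $C^{*}(G)/N_{\tilde\lambda}$ given by $\langle a + N_{\tilde\lambda},\, b + N_{\tilde\lambda}\rangle = \tilde\lambda(b^{*}a)$; recall also that for $g\in G\subset C^{*}(G)$ one has $g^{*}=g^{-1}$. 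Two things must be checked: that $\{g_i + N_{\tilde\lambda}\}_i$ is orthonormal, and that it is total in $H_{\tilde\lambda}$.

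Orthonormality is a direct unwinding of definitions. If $g_i$ and $g_j$ represent distinct left cosets of $C$ then $g_j^{-1}g_i\notin C$, so $\langle g_i + N_{\tilde\lambda},\, g_j + N_{\tilde\lambda}\rangle = \tilde\lambda(g_j^{-1}g_i) = 0$ by the definition of $\tilde\lambda$, while $\langle g_i + N_{\tilde\lambda},\, g_i + N_{\tilde\lambda}\rangle = \tilde\lambda(e) = \lambda(e) = 1$. For totality I would use that, although all of $G$ gives a total family in $H_{\tilde\lambda}$, each element of $G$ is already a scalar multiple of one of the chosen coset representatives. Indeed, $\mathbb{C}G$ is dense in $C^{*}(G)$ and the bounded map $C^{*}(G)\to H_{\tilde\lambda}$, $a\mapsto a + N_{\tilde\lambda}$, has dense range, so $\{g + N_{\tilde\lambda} : g\in G\}$ spans a dense subspace of $H_{\tilde\lambda}$. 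Writing an arbitrary $g\in G$ as $g = g_i c$ with $c\in C$ and using $g^{*}g_i = c^{-1}$, $g_i^{*}g = c$ together with $\tilde\lambda(e)=\lambda(e)=1$, $\tilde\lambda(c)=\lambda(c)$, $\tilde\lambda(c^{-1})=\lambda(c^{-1})=\overline{\lambda(c)}$, one computes
\[
\bigl\| (g-\lambda(c)g_i) + N_{\tilde\lambda}\bigr\|^{2} = \tilde\lambda\bigl(e - \lambda(c)c^{-1} - \overline{\lambda(c)}\,c + |\lambda(c)|^{2}e\bigr) = 1 - |\lambda(c)|^{2},
\]
which vanishes since $|\lambda(c)| = 1$. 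Hence $g + N_{\tilde\lambda} = \lambda(c)\,(g_i + N_{\tilde\lambda})$, so the closed linear span of $\{g_i + N_{\tilde\lambda}\}_i$ contains every $g + N_{\tilde\lambda}$, is therefore dense, and hence equals $H_{\tilde\lambda}$; combined with orthonormality this identifies $\{g_i + N_{\tilde\lambda}\}_i$ as an orthonormal basis.

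The orthonormality part is essentially automatic, so the only point requiring care --- and the main obstacle --- is the totality, which rests on the identity $g + N_{\tilde\lambda} = \lambda(c)(g_i + N_{\tilde\lambda})$ for $g = g_ic$. As the displayed computation shows, this identity uses that $\lambda$ has modulus $1$, i.e. that $\lambda$ is a one-dimensional representation of $C$, which is the situation relevant in the sequel; granting it, nothing further is needed.
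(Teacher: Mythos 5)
Your proof is correct and is exactly the direct unwinding of the GNS construction that the paper has in mind (the paper gives no argument, saying only that the lemma ``follows easily from the GNS-construction''). Your closing caveat is also well taken: orthonormality uses $\lambda(e)=1$ and totality uses $|\lambda(c)|=1$, and for a general positive-definite $\lambda$ the statement can actually fail --- e.g.\ for $G=C=\mathbb Z$ and $\lambda(n)=\delta_{n,0}$ one has $\tilde\lambda=\lambda$, a single coset, but $H_{\tilde\lambda}\cong L^2(\mathbb T)$ is infinite-dimensional --- so the lemma should be read with $\lambda$ a one-dimensional representation of $C$, which is the only case in which the paper ever invokes it (Lemma \ref{MatrixElement}, Proposition \ref{algebra}) and precisely the case your computation establishes.
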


\subsection{Voiculescu's Theorem.}

Let $H$ be a separable Hilbert space. Two representations $\pi$ and $\rho$ of a
C*-algebra $A$ on $H$ are {\it approximately unitarily equivalent}, denoted
by $\pi \sim_a \rho$, if there exists a sequence $U_n$, $n\in \mathbb N$, of unitary operators on $H$ such that
$$\lim_{n\to \infty} \|\pi(a) - U_n^*\rho(a) U_n\| =0,$$ for any $a\in A$.

For $T \in \mathcal B(H)$, we let $rank (T)\in \mathbb N\bigcup\{\infty\}$ denote the Hilbert-space dimension of the closure of the range
$Ran(T)$ of $T$.

In this paper we will use Voiculescu's Theorem on approximate unitary equivalence. Here is its reformulation due to Hadwin \cite{ReformulationVoiculescu}.

\begin{theorem} Let $A$ be a separable  C*-algebra, $H$ a separable Hilbert space
and $\pi, \rho: A \to B (H)$ non-degenerate representations. The following are equivalent:

(i) $\pi \sim_a \rho$,

(ii) $rank(\pi(a)) = rank(\rho(a)),$ for any $a\in A$.
\end{theorem}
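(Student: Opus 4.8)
This is Voiculescu's non-commutative Weyl--von Neumann theorem in Hadwin's reformulation, and I would prove the two implications separately. For (i)$\Rightarrow$(ii) the plan is to use lower semicontinuity of the rank function for the norm topology: conjugation by a unitary preserves both rank and operator norm, so $\|\pi(a)-U_n^*\rho(a)U_n\|\to 0$ is the same as $\|U_n\pi(a)U_n^*-\rho(a)\|\to 0$, and since each $U_n\pi(a)U_n^*$ has rank $\mathrm{rank}(\pi(a))$ while the set of operators of rank $\le k$ is norm-closed for every $k\in\mathbb N$ (one has $\mathrm{rank}(T)\le k$ iff every $(k+1)\times(k+1)$ Gram determinant $\det[\langle Tx_i,Tx_j\rangle]$ vanishes, and these are norm-continuous in $T$), we get $\mathrm{rank}(\rho(a))\le\mathrm{rank}(\pi(a))$; reading the hypothesis as $\|U_n^*\rho(a)U_n-\pi(a)\|\to 0$ gives the reverse inequality, hence equality for every $a$. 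No separability is needed here.

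The substance is (ii)$\Rightarrow$(i), which is Voiculescu's theorem. First I would reduce to faithful representations: putting $a\in\ker\pi$ into (ii) forces $\rho(a)=0$, so $\ker\pi=\ker\rho$, and since both approximate unitary equivalence and condition (ii) only see the induced representations of $A/\ker\pi$, I may replace $A$ by $A/\ker\pi$; adjoining a unit and using non-degeneracy I may also assume $A,\pi,\rho$ unital. Next I would split each of $\pi$ and $\rho$, up to unitary equivalence, into its ``discrete part'' --- the direct sum of the finite-multiplicity copies of irreducible subrepresentations $\sigma$ with $K(H_\sigma)\subseteq\sigma(A)$ --- together with a complementary part whose image contains no nonzero compact operator. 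Condition (ii) forces the two discrete parts to coincide: if such a $\sigma$ occurred with multiplicity $m$ in $\pi$ and $m'\neq m$ in $\rho$, then choosing $a\in A$ with $\sigma(a)$ of rank one and of small norm on the rest of $\widehat A$ would give $\mathrm{rank}(\pi(a))\neq\mathrm{rank}(\rho(a))$. One then wants the two complementary ``no-compacts'' parts to be approximately unitarily equivalent, which follows from Voiculescu's absorption principle: a representation $\sigma$ on a separable space with $\sigma(A)\cap K(H_\sigma)=\{0\}$ satisfies $\sigma\sim_a\sigma\oplus\tau$ for any representation $\tau$ on a separable space with $\ker\sigma\subseteq\ker\tau$, so the complementary parts absorb each other and hence are approximately unitarily equivalent; combining, $\pi\sim_a\rho$. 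Matching the discrete parts cleanly and verifying the ideal condition needed to apply the absorption principle to the complementary parts (where faithfulness is used) is somewhat fiddly, but not the main difficulty.

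The main difficulty is the absorption principle itself, and within it the one genuinely nontrivial input --- the only place where separability of $A$ enters: for a separable C*-subalgebra $A\subseteq B(H)$ with $H$ separable there exists an increasing sequence of finite-rank projections $P_n\uparrow I$ with $\|[P_n,a]\|\to 0$ for every $a\in A$, a quasicentral approximate unit for the compacts relative to $A$. Using the resulting commutator estimates, for each finite $F\subseteq A$ and $\varepsilon>0$ I would build a finite-rank partial isometry $V$ with $\|V^*\sigma(a)V-(\sigma\oplus\tau)(a)\|<\varepsilon$ for $a\in F$, with control on the complementary corners, and then telescope such local approximations along an orthogonal decomposition of the ambient Hilbert space into finite-dimensional blocks --- arranging the errors to be summable --- to assemble a single sequence of unitaries implementing $\sigma\sim_a\sigma\oplus\tau$. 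Carrying out this telescoping construction and the accompanying error bookkeeping is the real work; the rest of the argument is soft.
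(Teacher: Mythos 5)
Your direction (i)$\Rightarrow$(ii) is fine, and your overall skeleton for (ii)$\Rightarrow$(i) (reduce to the faithful unital case, split off an atomic part, handle the rest by Voiculescu absorption built from a quasicentral approximate unit) is the classical one; note the paper itself offers no proof, citing Hadwin's reformulation of Voiculescu's theorem. But one step of your plan is genuinely false, not merely fiddly: the two ``complementary, no-compacts'' parts need \emph{not} be approximately unitarily equivalent, so they cannot be made to ``absorb each other.'' Concretely, let $A=\mathcal T$ be the Toeplitz algebra acting identically on $\ell^2$ (call this $\pi_1$), let $\sigma$ be the multiplication representation of $\mathcal T/K\cong C(\mathbb T)$ on $L^2(\mathbb T)$ pulled back to $\mathcal T$, and set $\pi=\pi_1\oplus\sigma$, $\rho=\pi_1$. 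Both are faithful and unital, and $rank(\pi(a))=rank(\rho(a))$ for every $a$: if $\sigma(a)\neq 0$ then $a\notin K$, so $\pi_1(a)$ already has infinite rank. The discrete parts in your sense are both $\pi_1$, but the complementary parts are $\sigma$ and the zero representation, which are certainly not approximately unitarily equivalent; also their kernels ($K$ versus all of $A$) are not comparable, so the ideal/kernel hypothesis you plan to ``verify'' for mutual absorption simply fails. Any proof routed through ``complementary parts are $\sim_a$'' is therefore unrepairable as stated. The standard fix is to absorb each complementary part into the \emph{whole} opposite representation: working with the ideal $J=\pi^{-1}(K(H))=\rho^{-1}(K(H))$ (equality follows from the rank hypothesis via functional calculus), write $\pi=\pi_e\oplus\pi_0$ and $\rho=\rho_e\oplus\rho_0$ with $\pi_e,\rho_e$ the essential parts of $J$ and $\pi_0,\rho_0$ annihilating $J$; then Voiculescu gives $\pi\sim_a\pi\oplus\rho_0$ and $\rho\sim_a\rho\oplus\pi_0$, and $\pi_e\cong\rho_e$ yields $\pi\sim_a\rho$.

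A second, smaller defect is your device for matching multiplicities: choosing $a$ with $\sigma(a)$ of rank one and ``of small norm on the rest'' proves nothing, since small norm does not control rank -- the other summands can contribute infinite rank however small they are, making both sides infinite. The correct argument again lives inside $J$: $\pi_e(J)$ is a $c_0$-direct sum of elementary algebras, so one can pick $a\in J$ whose image is exactly a rank-one projection in a single block and exactly zero in all other blocks, and since $a\in J$ the complementary summand vanishes at $a$ too; then $rank(\pi(a))$ literally equals the multiplicity, and the hypothesis forces the multiplicities (and the set of atoms) of $\pi_e$ and $\rho_e$ to coincide. With these two repairs, together with the quasicentral-approximate-unit construction of the absorption theorem that you correctly identify as the analytic core, the proof goes through.
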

Throughout this paper we will apply Voiculescu's Theorem to unital representations which are automatically non-degenerate.

\medskip

 If $H$ is a Hilbert space and $H_0\subset H$ is a closed subspace, then we consider a representation $\pi$ of a $C^*$-algebra $A$ on $H_0$ as a (degenerate) representation of $A$ on $H$. Throughout this paper we will use the following notation: If $\pi$ is a representation of $A$ on a Hilbert space $H$, and $\pi_n$'s are representations of $A$ on $H_n\subseteq H$, then we write
 $$\pi = SOT\!-\!\lim \pi_n$$ meaning that for each $a\in A$, $\pi_n(a)\to \pi(a)$ in the strong operator topology.

\section{Auxilliary results on RF groups}
For a group $G$  its center will be denoted by $Z(G)$. We  use notation $N\triangleleft_f G$ for  a normal subgroup  of finite index.

\begin{definition}\label{filtration}(Baumslag \cite{Baumslag}) Let $A$ be a group. A family of its normal subgroups $A_{\lambda}$ of finite index is called a {\it filtration} if
 $$\bigcap A_{\lambda} = \{e\}.$$  Let $H$ be a subgroup of $A$. A filtration $\{A_{\lambda}\}_{\lambda\in \Lambda}$ is a called an {\it $H$-filtration} if
 $$\bigcap HA_{\lambda} = H.$$ Let $B$ be a second group
with a distinguished subgroup $K$ which is
isomorphic to $H$ under a given isomorphism $\phi$, and let $\{B_{\lambda}\}_{\lambda\in \Lambda}$ be a $K$-filtration of $B$. Then we say that $\{A_{\lambda}\}_{\lambda\in \Lambda}$ and $\{B_{\lambda}\}_{\lambda\in \Lambda}$ are  {\it $(H, K, \phi)$-compatible} if  the mapping
$hA_{\lambda} \mapsto \phi(h)B_{\lambda}$ is well-defined and  an isomorphism between $HA_{\lambda}/A_{\lambda}$ and $KB_{\lambda}/B_{\lambda}$, for each $\lambda\in \Lambda$.
\end{definition}

We will need an easy reformulation.

\begin{proposition}\label{ReformulationFiltr} The following are equivalent:

(i) There exist $(H, K, \phi)$-compatible $H$-filtration of $A$ and $K$-filtration of $B$;

(ii) For any $a_1, \ldots, a_n\notin H$, $b_1, \ldots, b_m\notin K$, $h_1, \ldots, h_l\in H$ and $k_1, \ldots, k_s\in K$ there exist homomorphisms $$f^A: A \to G^A, \; f^B: B \to G^B$$ onto finite groups $G^A, G^B$,  such that $f^A(H)$ and $f^B(K)$ are isomorphic via the mapping $f^A(h) \mapsto f^B(\phi(h))$, $h\in H$, and

1) $f^A(a_i)\notin f^A(H)$, for any $i=1, \ldots, n$,

2) $f^B(b_i)\notin f^B(K)$, for any $i=1, \ldots, m$,

3)$f^A(h_i)\neq f^A(h_j)$, for any $i\neq j$, $i, j =1, \ldots, l$,

4)$f^B(k_i)\neq f^B(k_j)$, for any $i\neq j$, $i, j =1, \ldots, s$.
\end{proposition}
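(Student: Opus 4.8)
The proof will be a routine unwinding of the definitions in both directions, the only genuine content being the translation between the ``for all finitely many elements'' formulation and the ``intersection'' formulation of the filtration conditions.

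\medskip

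\noindent\textbf{Plan of proof.}

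\emph{(i) $\Rightarrow$ (ii).} Suppose we are given $(H,K,\phi)$-compatible filtrations $\{A_\lambda\}$ and $\{B_\lambda\}$. Fix the finite data $a_1,\dots,a_n\notin H$, $b_1,\dots,b_m\notin K$, $h_1,\dots,h_l\in H$, $k_1,\dots,k_s\in K$. For each $a_i$, since $a_i\notin H=\bigcap_\lambda HA_\lambda$, there is $\lambda$ with $a_i\notin HA_\lambda$; for each pair $h_i\neq h_j$, since $\bigcap_\lambda A_\lambda=\{e\}$ we have $h_i^{-1}h_j\neq e$, so there is $\lambda$ with $h_i^{-1}h_j\notin A_\lambda$, i.e. $h_iA_\lambda\neq h_jA_\lambda$; and similarly on the $B$-side. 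Here I would need the filtrations to be closed under finite intersection, or at least directed, so that a single $\lambda$ can be chosen simultaneously working for all the finitely many requirements --- so I would first remark that one may always replace a filtration by the family of all finite intersections of its members (this preserves being an $H$-filtration and the compatibility, since $HA_\lambda/A_\lambda\cong$ the image, and intersections only shrink the groups), hence WLOG the index set is directed. Then pick a single $\lambda$ dominating all the finitely many indices above, and set $f^A\colon A\to G^A:=A/A_\lambda$, $f^B\colon B\to G^B:=B/B_\lambda$ the quotient maps, which are onto finite groups. Conditions 1)--4) hold by the choice of $\lambda$, and the map $f^A(h)\mapsto f^B(\phi(h))$ is exactly the compatibility isomorphism $HA_\lambda/A_\lambda\cong KB_\lambda/B_\lambda$, restricted appropriately; note $f^A(H)=HA_\lambda/A_\lambda$ and $f^B(K)=KB_\lambda/B_\lambda$.

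\emph{(ii) $\Rightarrow$ (i).} Now index everything by the directed set $\Lambda$ of all finite subsets of the relevant data: formally, let $\Lambda$ be the set of all tuples $(S_A,S_B,T_H,T_K)$ where $S_A$ is a finite subset of $A\setminus H$, $S_B$ a finite subset of $B\setminus K$, $T_H$ a finite subset of $H$, $T_K$ a finite subset of $K$, ordered by componentwise inclusion. For such a tuple $\lambda$, to accommodate later the requirement $\bigcap A_\lambda=\{e\}$ I would enlarge $T_H$: actually it is cleaner to take $\Lambda$ to also include a finite subset $S$ of $A\setminus\{e\}$ and a finite subset $S'$ of $B\setminus\{e\}$ and demand additionally that $f^A$ separate $e$ from each element of $S$ and $f^B$ from each of $S'$; this is subsumed by (ii) applied with $h_1=e$ (take the $h_i$ to run over $\{e\}\cup S$ and likewise on the $B$-side), so (ii) still supplies the needed $f^A,f^B$. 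Given $\lambda$, apply (ii) to get $f^A_\lambda\colon A\to G^A_\lambda$, $f^B_\lambda\colon B\to G^B_\lambda$, and put $A_\lambda:=\ker f^A_\lambda\triangleleft_f A$, $B_\lambda:=\ker f^B_\lambda\triangleleft_f B$. Since the data include all finite subsets, $\bigcap_\lambda A_\lambda=\{e\}$ and $\bigcap_\lambda B_\lambda=\{e\}$ (the separation-from-$e$ clause), and $\bigcap_\lambda HA_\lambda=H$ because any $a\notin H$ is killed (i.e. pushed outside the image of $H$) by $f^A_\lambda$ for $\lambda$ containing $a$; so these are an $H$-filtration and a $K$-filtration. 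Finally, for fixed $\lambda$ the map $hA_\lambda\mapsto f^A_\lambda(h)\mapsto f^B_\lambda(\phi(h))\mapsto \phi(h)B_\lambda$ is well-defined and bijective because $f^A_\lambda$ identifies $HA_\lambda/A_\lambda$ with $f^A_\lambda(H)$, $f^B_\lambda$ identifies $KB_\lambda/B_\lambda$ with $f^B_\lambda(K)$, and $f^A_\lambda(h)\mapsto f^B_\lambda(\phi(h))$ is an isomorphism of these by hypothesis; it is a group homomorphism as a composite of homomorphisms. Hence the two filtrations are $(H,K,\phi)$-compatible.

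\medskip

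The main obstacle --- really the only point requiring care --- is the passage between ``a single finite configuration handled by one quotient'' and ``a whole filtration indexed by a directed poset.'' In the forward direction this is handled by closing the filtration under finite intersections to make the index set directed; in the reverse direction by choosing the index set to be the directed set of all finite configurations and checking that the resulting families genuinely have trivial intersection and the correct $H$- and $K$-saturation. One should also double-check the harmless technical point that enlarging a filtration by finite intersections preserves the compatibility isomorphism --- it does, since $HA_\mu/A_\mu$ for $A_\mu=A_{\lambda_1}\cap\cdots\cap A_{\lambda_r}$ still maps isomorphically onto its image under the induced map to $\prod_j A_{\lambda_j}/A_{\lambda_j}$, and the same on the $B$-side, compatibly with $\phi$.
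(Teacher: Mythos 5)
Your proof is correct and takes essentially the same route as the paper's: (i)$\Rightarrow$(ii) by intersecting the finitely many relevant filtration subgroups and using the quotient maps, and (ii)$\Rightarrow$(i) by taking $A_\lambda=\ker f^A_\lambda$, $B_\lambda=\ker f^B_\lambda$ indexed by the finite data tuples, with the compatibility isomorphism read off from $HA_\lambda/A_\lambda\cong f^A_\lambda(H)\cong f^B_\lambda(K)\cong KB_\lambda/B_\lambda$. Two minor remarks: your closure-under-finite-intersections step, which lets you use one common index $\lambda$ on both sides, is sound (and in fact a bit tidier than the paper, which intersects separately chosen indices for $A$ and for $B$); and your auxiliary sets $S\subseteq A\setminus\{e\}$, $S'\subseteq B\setminus\{e\}$ are redundant and slightly misstated, since taking the $h_i$ to run over $\{e\}\cup S$ is not a literal instance of (ii) when $S\not\subseteq H$ --- but no enlargement is needed, because $\bigcap_\lambda A_\lambda=\{e\}$ already follows from condition 1) for elements of $A\setminus H$ (as $f^A(a)\notin f^A(H)\ni e$) and from condition 3) with $e$ adjoined to the $h_i$ for elements of $H\setminus\{e\}$.
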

\begin{proof} (i) $\Rightarrow$ (ii): Let $a_1, \ldots, a_n\notin H$, $b_1, \ldots, b_m\notin K$, $h_1, \ldots, h_l\in H$ and $k_1, \ldots, k_s\in K$. It follows from Definition \ref{filtration} that for any $i=1, \ldots, n$ we can find  $\lambda_i$ such that $a_i\notin HA_{\lambda_i}$, for any $i=1, \ldots, m$ we can find $\lambda_i'$ such that $b_i\notin KB_{\lambda_i'}$, for any $i\neq j$, $i, j =1, \ldots, l$ we can find $\lambda_{i,j}$
such that $h_ih_j^{-1}\notin A_{\lambda_{i,j}}$, and for any $i\neq j$, $i, j =1, \ldots, s$ we can find $\lambda_{i, j}'$ such that $k_ik_j^{-1}\notin B_{\lambda_{i, j}'}.$
 Since
$$\left(\bigcap_{i=1}^n A_{\lambda_i}\right)\bigcap\left(\bigcap_{ i\neq j = 1}^l A_{\lambda_{i, j}}\right)\; \text{and} \;  \left(\bigcap_{i=1}^m B_{\lambda_i'}\right)\bigcap\left(\bigcap_{i\neq j=1}^s B_{\lambda_{i, j}'}\right)$$ are normal subgroups of finite index,
the quotient maps $$f^A: A \to A/\left(\bigcap_{i=1}^nA_{\lambda_i}\right)\bigcap\left(\bigcap_{i\neq j=1}^l A_{\lambda_{i, j}}\right)$$ and $$f^B: B \to B/\left(\bigcap_{i=1}^mB_{\lambda'_i}\right)\bigcap\left(\bigcap_{i\neq j=1}^s B_{\lambda'_{i, j}}\right)$$ are homomorphisms to finite groups. It is easy to see that they satisfy all the conditions of (ii).

(ii) $\Rightarrow$ (i): Let $$\Lambda = \{a_1, \ldots, a_n\notin H, b_1, \ldots, b_m\notin K, h_1, \ldots, h_l\in H, k_1, \ldots, k_s\in K \;|\; n, m, l, s\in \mathbb N\}. $$ For $\lambda\in \Lambda$ we find $f^A, f^B$ as in (ii) and define $$A_{\lambda} = \ker f^A, \; B_{\lambda} = \ker f^B.$$
It is easy to check that we obtained an $H$-filtration of $A$ and a $K$-filtration of $B$ that are $(H, K, \phi)$-compatible.
\end{proof}

\begin{proposition}\label{QuotientRF0} Let $G$ be a group and $H$ its normal subgroup. The following are equivalent:

 (i) $\bigcap_{N \triangleleft_f G} HN = H$;

 (ii) $G/H$ is RF.
\end{proposition}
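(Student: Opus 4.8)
The plan is to establish the two implications directly from the definitions, translating the condition $\bigcap_{N\triangleleft_f G} HN = H$ into a statement about separating points of $G/H$ by homomorphisms onto finite groups.

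First I would prove $(i)\Rightarrow(ii)$. Let $\bar g\in G/H$ be a non-identity element, and lift it to $g\in G\setminus H$. By hypothesis $\bigcap_{N\triangleleft_f G} HN = H$, and since $g\notin H$ there exists some $N\triangleleft_f G$ with $g\notin HN$. Now consider the quotient map $q\colon G\to G/N$; since $H$ is normal, $q(H)$ is a normal subgroup of the finite group $G/N$, so we may further pass to the quotient $\pi\colon G\to (G/N)/q(H) \cong G/HN$. This is a homomorphism onto a finite group that kills $H$, hence factors through $G/H$, giving a homomorphism $\bar\pi\colon G/H\to G/HN$; and $\bar\pi(\bar g)\neq e$ precisely because $g\notin HN$. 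Thus homomorphisms from $G/H$ onto finite groups separate $\bar g$ from the identity, so $G/H$ is RF. (It is worth remarking that $HN$ is the preimage under $q$ of the finite-index subgroup $q(H)\cap(\text{core})$, but in any case $HN\triangleleft_f G$ since it contains $N$, which is the cleanest way to see finiteness of the quotient.)

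Next I would prove $(ii)\Rightarrow(i)$. The inclusion $H\subseteq \bigcap_{N\triangleleft_f G} HN$ is automatic since $H\subseteq HN$ for every $N$. For the reverse inclusion, suppose $g\in G\setminus H$; I must produce $N\triangleleft_f G$ with $g\notin HN$. Since $G/H$ is RF, there is a homomorphism $\psi\colon G/H\to F$ onto a finite group with $\psi(\bar g)\neq e$, where $\bar g$ is the image of $g$. Composing with the projection $G\to G/H$ gives a homomorphism $\Psi\colon G\to F$ with $\Psi(g)\neq e$ and $\Psi(H)=\{e\}$. Let $N=\ker\Psi$; then $N\triangleleft_f G$, and $HN\subseteq N$ (since $H\subseteq N$), so $HN=N$, and $g\notin N = HN$ because $\Psi(g)\neq e$. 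Hence $g\notin\bigcap_{N\triangleleft_f G} HN$, which proves the reverse inclusion.

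I do not expect a serious obstacle here; the only point requiring a little care is the bookkeeping in $(i)\Rightarrow(ii)$ to be sure that the finite quotient one lands in really does annihilate $H$ while still detecting $g$ — this is handled by noting $N\subseteq HN$ so $[G:HN]<\infty$, and then $G/HN$ is a finite group that is a quotient of $G/H$ on which the image of $g$ is nontrivial. Everything else is a direct unwinding of the definitions of RF and of the normal subgroups involved.
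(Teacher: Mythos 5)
Your proof is correct and follows essentially the same route as the paper: for (i)$\Rightarrow$(ii) you pass to the finite quotient $G/HN\cong(G/N)/\pi(H)$ which factors through $G/H$ and detects $g$, and for (ii)$\Rightarrow$(i) you take $N=\ker(\pi\circ q)$, note $H\subseteq N$ so $HN=N$, and conclude $g\notin HN$. (The parenthetical about a ``core'' is unnecessary, but as you note yourself, finiteness of $[G:HN]$ follows immediately from $N\subseteq HN$.)
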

\begin{proof}
(i)$\Rightarrow$(ii): Suppose $g\notin H$. By (i) there exists $N \triangleleft_f G$ such that $g\notin HN$. It follows that for the homomorphism $\pi: G \to G/N$ one has $\pi(g)\notin \pi(H)$. Now for the homomorphism $\rho: G/H\to \pi(G)/\pi(H)$ to the finite group $pi(G)/\pi(H)$ one obtains $\rho(gH) = \pi(g)\pi(H) \neq e$.

(ii)$\Rightarrow$(i): Clearly $\bigcap_{N \triangleleft_f G} HN \supseteq H$. Suppose $g\notin H$. Since $G/H$ is RF, there is a homomorphism $\pi: G/H \to A$, where $A$ is a finite group, such that $\pi(gH)\neq e$. Let $q: G \to G/H$ be the canonical surjection and let $N = \ker  \pi\circ q.$ Then $N \triangleleft_f G$, $g\notin N$, and $H\subseteq N$ whence $HN = N$. Hence $g\notin HN$ and therefore $\bigcap_{N \triangleleft_f G} HN = H$.

\end{proof}

\begin{corollary}\label{QuotientRF} Let $G$ be an RF group and $H$ its normal subgroup such that  $G/H$ is RF. Then  there exists an $H$-filtration of $G$.
\end{corollary}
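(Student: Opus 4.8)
The plan is to take the family of \emph{all} finite-index normal subgroups of $G$ as the sought $H$-filtration. Let $\Lambda$ be an index set for $\{N \;:\; N \triangleleft_f G\}$, and put $G_\lambda = N_\lambda$ for $\lambda \in \Lambda$. The first thing I would verify is that $\{G_\lambda\}_{\lambda \in \Lambda}$ is a filtration in the sense of Definition \ref{filtration}, i.e. that $\bigcap_\lambda G_\lambda = \{e\}$: given $e \neq g \in G$, residual finiteness of $G$ supplies a homomorphism $f$ from $G$ onto a finite group with $f(g) \neq e$, and then $\ker f$ is a finite-index normal subgroup of $G$ not containing $g$; hence no nontrivial element lies in every $G_\lambda$.

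The second (and only remaining) point is the $H$-filtration condition $\bigcap_\lambda H G_\lambda = H$, i.e. $\bigcap_{N \triangleleft_f G} HN = H$. Since $H$ is normal in $G$ and $G/H$ is RF by hypothesis, this is exactly the implication (ii) $\Rightarrow$ (i) of Proposition \ref{QuotientRF0}. Combining the two equalities just established, $\{G_\lambda\}_{\lambda \in \Lambda}$ is an $H$-filtration of $G$, as required.

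I do not foresee any genuine obstacle here: all of the substance is already contained in Proposition \ref{QuotientRF0}, and the only observation needed is that one and the same indexing family --- the set of all finite-index normal subgroups of $G$ --- simultaneously encodes residual finiteness of $G$ (yielding $\bigcap_\lambda G_\lambda = \{e\}$) and of $G/H$ (yielding $\bigcap_\lambda H G_\lambda = H$). If for later use one wanted a filtration indexed by a fixed set $\Lambda$ processed through finite intersections of kernels, one could pass to a cofinal subfamily exactly as in the proof of Proposition \ref{ReformulationFiltr}, but such a refinement is not needed for the statement as given.
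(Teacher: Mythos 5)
Your proof is correct and is exactly the argument the paper intends: the corollary is an immediate consequence of Proposition \ref{QuotientRF0} (giving $\bigcap_{N\triangleleft_f G} HN = H$) together with residual finiteness of $G$ (giving $\bigcap_{N\triangleleft_f G} N = \{e\}$), with the family of all finite-index normal subgroups serving as the $H$-filtration. Nothing further is needed.
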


\begin{corollary}\label{QuotientCenter} Let $G$ be an RF group and let $Z(G)$ be its center. Then
$$\bigcap_{N \triangleleft_f G} N Z(G) = Z(G)$$ and $G/Z(G)$ is RF.
\end{corollary}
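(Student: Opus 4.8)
The plan is to deduce this from Proposition \ref{QuotientRF0} together with residual finiteness of $G/Z(G)$. First I would observe that $Z(G)$ is a normal subgroup of $G$, so that Proposition \ref{QuotientRF0}, applied with $H = Z(G)$, shows that the two assertions
$$\bigcap_{N\triangleleft_f G} NZ(G) = Z(G) \qquad \text{and} \qquad G/Z(G) \text{ is RF}$$
are equivalent. Hence it suffices to establish either one, and I would establish the first directly.

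The inclusion $\bigcap_{N\triangleleft_f G} NZ(G) \supseteq Z(G)$ is trivial, so the content is the reverse inclusion. Given $g\in G\setminus Z(G)$, there is some $h\in G$ with $[g,h] := ghg^{-1}h^{-1}\neq e$. Since $G$ is RF, there is $N\triangleleft_f G$ with $[g,h]\notin N$. The key point is that replacing $g$ by a central multiple does not change the commutator: if $g = nz$ with $n\in N$ and $z\in Z(G)$, then $[g,h] = [nz,h] = [n,h] = n\,(hn^{-1}h^{-1})\in N$ because $N$ is normal, contradicting $[g,h]\notin N$. Thus $g\notin NZ(G)$, and since $g$ was an arbitrary element outside $Z(G)$ we obtain $\bigcap_{N\triangleleft_f G} NZ(G) = Z(G)$; Proposition \ref{QuotientRF0} then gives that $G/Z(G)$ is RF.

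I do not expect any real obstacle: the only step requiring a moment's care is the elementary identity that $[gz,h] = [g,h]$ for $z\in Z(G)$ and that this commutator lands in $N$ when $g$ lies in $N$ modulo the center, both of which are immediate from normality of $N$ and centrality of $Z(G)$. As an alternative route one can argue on the finite-quotient side: any surjection $\pi\colon G\to F$ onto a finite group with $\pi([g,h])\neq e$ automatically sends $Z(G)$ into $Z(F)$, so the composite $G\to F\to F/Z(F)$ factors through $G/Z(G)$ and is nontrivial on $gZ(G)$, exhibiting $G/Z(G)$ as RF.
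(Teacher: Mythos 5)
Your proposal is correct and follows essentially the same route as the paper: pick a commutator witness $[g,h]\neq e$ for non-centrality, use residual finiteness to keep it nontrivial in a finite quotient, conclude $g\notin NZ(G)$, and then invoke Proposition \ref{QuotientRF0} for the residual finiteness of $G/Z(G)$. The paper phrases the separation via a homomorphism $\pi$ to a finite group (noting $\pi(g)\notin\pi(Z(G))$) rather than via the kernel $N$ directly, but this is the same argument.
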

\begin{proof} Let $x\notin Z(G)$. Then there is $y\in G$ such that $xyx^{-1}y^{-1}\neq e.$
As $G$ is RF, there is a homomorphism $\pi$ to a finite group such that $\pi(x) \pi(y)\pi(x)^{-1}\pi(y)^{-1}\neq e$.
Hence $\pi(x)\notin \pi(Z(G))$ and hence $x\notin  Z(G) Ker \pi$. Hence $x\notin \bigcap_{N \triangleleft_f G} Z(G) N$ and
we showed that $\bigcap_{N \triangleleft_f G} N Z(G) = Z(G)$. The last statement follows from Proposition \ref{QuotientRF0}.
\end{proof}

Recall that a subgroup $H$ of a group $G$ is {\it profinitely closed}  if it is the intersection of all subgroups of finite index in $G$ that contain $H$. A group $G$ is {\it extended residually finite} (ERF) if every subgroup of $G$ is profinitely closed.

\begin{lemma}\label{ProfinitelyClosed} Let $H$ be a normal subgroup of $G$. Then $H$ is profinitely closed if and only if it is the intersection of all the normal subgroups of finite index in $G$ that contain $H$.
\end{lemma}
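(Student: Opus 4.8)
The plan is to prove both directions by unwinding the definitions, exploiting the fact that a profinitely closed normal subgroup automatically behaves well with respect to \emph{normal} subgroups of finite index.

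First, the ``if'' direction is immediate: if $H$ is the intersection of all normal subgroups of finite index containing it, then since normal subgroups of finite index are in particular subgroups of finite index, the intersection of \emph{all} subgroups of finite index containing $H$ is contained in $H$; the reverse inclusion is trivial since $H$ is contained in every such subgroup. Hence $H$ is profinitely closed.

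For the ``only if'' direction, assume $H$ is profinitely closed, and let $g\in G\setminus H$. By profinite closedness there is a subgroup $K$ of finite index in $G$ with $H\le K$ and $g\notin K$. The point is to replace $K$ by a normal subgroup of finite index containing $H$ that still excludes $g$. The natural candidate is the normal core $N=\bigcap_{x\in G} xKx^{-1}$, which is normal of finite index in $G$ (a standard fact, as $G$ acts on the finite coset space $G/K$ with kernel $N$). Since $H$ is normal in $G$ and $H\le K$, we get $H = xHx^{-1}\le xKx^{-1}$ for every $x\in G$, so $H\le N$. Also $g\notin N$ because $g\notin K$ (taking $x=e$). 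Thus for each $g\notin H$ there is a normal subgroup of finite index containing $H$ but not $g$, which shows that the intersection of all normal subgroups of finite index containing $H$ equals $H$.

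I do not anticipate a serious obstacle here; the only mild subtlety is recalling and citing the normal-core construction correctly (finite index of $N$ follows from $[G:N]\le [G:K]!$), and making sure to use the normality of $H$ in $G$ when checking $H\le xKx^{-1}$ — this is exactly where the hypothesis ``$H$ normal in $G$'' enters, and without it the statement would fail. So the proof is a short two-step argument: trivial inclusion in one direction, normal-core trick in the other.
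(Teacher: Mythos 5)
Your argument is correct and essentially identical to the paper's: the nontrivial direction is handled by passing from a finite-index subgroup $K\supseteq H$ excluding $g$ to its normal core $\bigcap_{x\in G}xKx^{-1}$, using normality of $H$ to see the core still contains $H$, exactly as in the paper. The only cosmetic difference is how finite index of the core is justified (your coset-action/factorial bound versus the paper's observation that the core is an intersection of finitely many conjugates), which is immaterial.
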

\begin{proof} We need to show that if $g\notin M$, for some subgroup $M$ of finite index in $G$, then $g\notin N$, for some normal subgroup $N$ of finite index in $G$. Since $g\notin M$, we have $g\notin \bigcap_{g_0\in G} g_0Mg_0^{-1}.$ Let $N = \bigcap_{g_0\in G} g_0Mg_0^{-1}$. Then $N$ is normal. Since $H$ is normal, we have $g_0Mg_0^{-1} \supseteq g_0Hg_0^{-1} = H$, for any $g_0\in G$. Hence $N\supseteq H$. Let $g_1, \ldots, g_N$ be representatives of left cosets of $M$. Then $N = \bigcap_{g_0\in G} g_0Mg_0^{-1} = \bigcap_{i=1}^{N} g_iMg_i^{-1}$ is the intersection of finitely many subgroups of finite index and hence is of finite index.
\end{proof}

\begin{theorem}\label{WhenCompFiltrExists} Let $A$ be a group and $H$ be its   subgroup  such that there exists an $H$-filtration of $A$. Let $B$ be an ERF group, $K$  a central subgroup of $B$,  $\phi: H \to K$  an isomorphism.
Then there exist $(H, K, \phi)$-compatible $H$-filtration of $A$ and $K$-filtration of  $B$.
\end{theorem}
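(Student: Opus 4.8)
The plan is to prove condition (ii) of Proposition~\ref{ReformulationFiltr} and conclude via its equivalence with (i). So I would fix finitely many elements $a_1,\dots,a_n\notin H$, $b_1,\dots,b_m\notin K$, and $h_1,\dots,h_l\in H$, $k_1,\dots,k_s\in K$ (the latter two lists taken pairwise distinct, as conditions 3)--4) force), and build the homomorphisms $f^A,f^B$. For $f^A$ I would use only the given $H$-filtration $\{A_\lambda\}$ of $A$: intersecting finitely many of the $A_\lambda$ and using $\bigcap HA_\lambda=H$ and $\bigcap A_\lambda=\{e\}$, I get a single $N\triangleleft_f A$ with $a_i\notin HN$ for all $i$, with $h_ih_j^{-1}\notin N$ for $i\neq j$, and — a point I will need later — also with $\phi^{-1}(k_i)\phi^{-1}(k_j)^{-1}\notin N$ for $i\neq j$ (legitimate, since each $\phi^{-1}(k_t)\in H$ and these are distinct). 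Then $f^A:A\to A/N$ already satisfies conditions 1) and 3).

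Writing $L:=\phi(H\cap N)\le K$, the compatibility requirement on $f^B$ is exactly $K\cap\ker f^B=L$, so the task reduces to finding $Q\triangleleft_f B$ with $Q\cap K=L$ and $b_i\notin KQ$ for all $i$. Here the ERF hypothesis enters, and this is the crux. Since $[A:N]<\infty$ we have $[H:H\cap N]<\infty$, hence $[K:L]<\infty$; and since $K$ is central in $B$, $L$ is normal in $B$. By Lemma~\ref{ProfinitelyClosed} applied to the ERF group $B$, both $L$ and $K$ are intersections of finite-index normal subgroups of $B$ containing them. Because $K/L$ is \emph{finite}, only finitely many cosets must be separated: picking representatives $k^{(1)},\dots,k^{(r)}$ of the nontrivial cosets of $L$ in $K$, I find $N'_t\triangleleft_f B$ with $L\subseteq N'_t$, $k^{(t)}\notin N'_t$; and for each $b_i$ I find $M_i\triangleleft_f B$ with $K\subseteq M_i$, $b_i\notin M_i$. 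Setting $Q:=\bigl(\bigcap_t N'_t\bigr)\cap\bigl(\bigcap_i M_i\bigr)$ and $f^B:B\to B/Q$, a direct check gives $Q\cap K=L$ (an element of $Q\cap K$ outside $L$ lies in some coset $k^{(t)}L$, forcing $k^{(t)}\in N'_t$) and $b_i\notin KQ$ (otherwise $b_i\in M_i$).

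It then remains to assemble the conclusions: $\phi(H\cap\ker f^A)=\phi(H\cap N)=L=K\cap\ker f^B$, so $f^A(h)\mapsto f^B(\phi(h))$ is a well-defined isomorphism of $f^A(H)$ onto $f^B(K)$; condition 2) is the already-verified $b_i\notin KQ$; and condition 4) holds because $k_ik_j^{-1}=\phi\bigl(\phi^{-1}(k_i)\phi^{-1}(k_j)^{-1}\bigr)\notin\phi(H\cap N)=L=K\cap\ker f^B$, which is exactly why I arranged the separation of the $\phi^{-1}(k_t)$ back in $A$. Proposition~\ref{ReformulationFiltr}(ii) then holds, so (i) yields the desired $(H,K,\phi)$-compatible filtrations. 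The one genuinely delicate step is producing $Q$ with $Q\cap K$ equal to $L$ and not merely containing it; that is precisely where the ERF property of $B$, the normality of $L$ coming from centrality of $K$, and the finiteness of $[K:L]$ are needed.
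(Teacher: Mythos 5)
Your proposal is correct and follows essentially the same route as the paper: take $N$ from the given $H$-filtration to handle the $A$-side conditions, set $L=\phi(H\cap N)$, use centrality of $K$ (so $L\triangleleft B$), finiteness of $[K:L]$, and the ERF hypothesis via Lemma \ref{ProfinitelyClosed} to produce a finite-index normal $Q\leq B$ with $Q\cap K=L$ by excluding the nontrivial coset representatives, which is exactly the paper's argument for $B_n\cap K=L_n$. The only cosmetic differences (routing explicitly through Proposition \ref{ReformulationFiltr}, and separating the $k_i$'s by arranging $\phi^{-1}(k_i)\phi^{-1}(k_j)^{-1}\notin N$ on the $A$-side rather than excluding them from $B_n$ directly) do not change the substance.
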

\begin{proof} Let $a_1, a_2, \ldots$ be all the elements in $A\setminus H$, $h_1, h_2, \ldots$ -- all the elements in $H\setminus \{e\}$,
$b_1, b_2, \ldots$ -- all the elements in $B\setminus K$, $k_1=\phi(h_1), k_2=\phi(h_2), \ldots$ -- all the elements in $K\setminus \{e\}$.
Fix $n\in \mathbb N$. Since there exists an $H$-filtration of $A$, we can find  $A_n \triangleleft_f A$ such that
\begin{equation}\label{When**'} a_n\notin A_nH, \;  h_n\notin A_n. \end{equation}
Let
$$L_n = \phi\left(H\bigcap A_n\right).$$
It follows that
\begin{equation}\label{k=phi(h)}k_n\notin L_n.\end{equation}
Since  $L_n$ has a finite index in $ \phi(H) = K$, we can denote by $k^{(1)}, \ldots, k^{(m_n)}$ representatives of cosets $K/L_n$, here $m_n<\infty$.
Let
$$E = \{N\;| \;N\triangleleft_f B, \; N\supseteq L_n\}.$$
Since $K$ is central, $L_n$ is normal in $B$ and by Lemma \ref{ProfinitelyClosed}
\begin{equation}\label{When1} \bigcap_{N\in E} N = L_n. \end{equation}
Let
$$F = \{M\; |\; M\triangleleft_f B, \; M\supseteq K\}.$$
Then $F\subseteq E.$ By Lemma \ref{ProfinitelyClosed} $K= \bigcap_{M\in F} M.$ Since for any $M\in F$ we have $M=KM$,
$$K  =\bigcap_{M\in F} M = \bigcap_{M\in F} KM \supseteq \bigcap_{N\in E} KN.$$ Clearly $K \subseteq \bigcap_{N\in E} KN$ and therefore
\begin{equation}\label{When2} K = \bigcap_{N\in E} KN.\end{equation}
By (\ref{k=phi(h)}), (\ref{When1}) and (\ref{When2}) there is $B_n \triangleleft_f B$ such that
\begin{equation}\label{When3*} B_n\supseteq L_n, \; b_n\notin KB_n, k_n\notin B_n\end{equation}
 \begin{equation}\label{When4*} k^{(1)}, \ldots, k^{(m_n)}\notin B_n.\end{equation}
 By (\ref{When4*}), $B_n\bigcap K = L_n.$ (Indeed, if $B_n \bigcap K \supset L_n$, then there is $k\in B_n$ which can written as $k=k^{(i)}l$, for some $i\le m_n$ and some $l\in L_n$. Then $k^{(i)} = kl^{-1}\in B_n$ which contradicts to (\ref{When4*})). Thus we obtain
\begin{equation}\label{When3} B_n\bigcap K = \phi\left(A_n\bigcap H\right).\end{equation}
By (\ref{When**'}), $\{A_n\}_{n\in \mathbb N}$ is an $H$-filtration of $A$, by (\ref{When3*}), $\{B_n\}_{n\in \mathbb N}$ is a $K$-filtration of $B$, and it remains to show that they are $(H, K, \phi)$-compatible.  By (\ref{When3}), $\phi$ induces a well-defined injective homomorphism $ HA_n/A_n \to KB_n/B_n$. (Indeed for the homomorphism $hA_n \mapsto \phi(h)B_n$ to be  well-defined and injective one needs the condition $h\in A_n$ to imply $\phi(h)\in B_n$, which is guaranteed by (\ref{When3})). Its surjectivity is obvious.
\end{proof}

The following corollary provides conditions that guarantee the existence of compatible filtrations. In the case of both groups being polycyclic it was most probably known to Baumslag as one can guess from [\cite{Baumslag}, Th. 8]. But since the proof of this was not written out (and the statement itself was not formulated) we have to prove it here as we will use it in section 6.

\begin{corollary}\label{WhenCompFiltrExists2} Suppose $A$ is RF,  $H$ is a subgroup of $A$, $K$ is a central subgroup of $B$, $\phi: H \to K$ is an isomorphism.
Then there exist an $H$-filtration of $A$ and a $K$-filtration of $B$ which are $(H, K, \phi)$-compatible, if any of the following conditions holds:

(i) $H = Z(A)$, $B$ is polycyclic-by-finite;

(ii) $H$ is normal, $A/H$ is RF, $B$ is polycyclic-by-finite;

(iii) $A, B$ are polycyclic-by-finite.
\end{corollary}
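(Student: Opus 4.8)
The plan is to derive all three cases from Theorem \ref{WhenCompFiltrExists}. Besides the isomorphism $\phi: H \to K$ with $K$ central in $B$ (which is part of the standing hypotheses here), that theorem needs two inputs: that $B$ be ERF, and that there exist an $H$-filtration of $A$. The first is satisfied uniformly: in all three cases $B$ is polycyclic-by-finite, every subgroup of such a group is finitely generated (the group is Noetherian), and it is well known that polycyclic-by-finite groups are ERF. So in each case it remains only to produce an $H$-filtration of $A$ and then invoke Theorem \ref{WhenCompFiltrExists}.

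For (i), $H = Z(A)$: since $A$ is RF, Corollary \ref{QuotientCenter} gives that $A/Z(A)$ is RF, and then Corollary \ref{QuotientRF}, applied to the normal subgroup $Z(A)$ of the RF group $A$ with RF quotient, furnishes a $Z(A)$-filtration of $A$. For (ii), $H$ is normal with $A/H$ RF and $A$ RF; this is exactly the hypothesis of Corollary \ref{QuotientRF}, which directly furnishes an $H$-filtration of $A$.

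For (iii), $A$ is polycyclic-by-finite, hence RF and (as above) ERF, but now $H$ need not be normal, so Corollary \ref{QuotientRF} does not apply and I would instead show by hand that $\bigcap_{N\triangleleft_f A} HN = H$. The inclusion $\supseteq$ is trivial. For $\subseteq$, take $g\in A\setminus H$; by ERF there is a subgroup $M$ of finite index in $A$ with $H\subseteq M$ and $g\notin M$. Put $N=\bigcap_{a\in A} aMa^{-1}$, the intersection of the (finitely many) conjugates of $M$; then $N\triangleleft_f A$, and since $H\subseteq M$ we get $HN\subseteq HM = M$, so $g\notin HN$. Hence $g\notin \bigcap_{N\triangleleft_f A} HN$. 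Together with $\bigcap_{N\triangleleft_f A} N=\{e\}$ (residual finiteness of $A$), this shows that the family $\{N : N\triangleleft_f A\}$ is an $H$-filtration of $A$.

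In each case, applying Theorem \ref{WhenCompFiltrExists} to the $H$-filtration of $A$ just obtained, together with the ERF group $B$, its central subgroup $K$, and the isomorphism $\phi$, yields the desired $(H,K,\phi)$-compatible $H$-filtration of $A$ and $K$-filtration of $B$. The only non-formal ingredient is the classical fact that polycyclic-by-finite groups are ERF; the one point requiring a little care is the passage, in case (iii), from separability by arbitrary finite-index subgroups to the equality $\bigcap_{N\triangleleft_f A} HN = H$ over \emph{normal} finite-index subgroups, handled by the core trick above. I do not expect any serious obstacle.
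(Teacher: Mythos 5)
Your proposal is correct and follows essentially the same route as the paper: in all three cases $B$ is ERF by Malcev's theorem on polycyclic-by-finite groups, an $H$-filtration of $A$ is produced via Corollaries \ref{QuotientRF} and \ref{QuotientCenter} (cases (i) and (ii)), and then Theorem \ref{WhenCompFiltrExists} is applied. Your explicit normal-core argument in case (iii), where $H$ need not be normal so Corollary \ref{QuotientRF} does not apply, correctly supplies a step the paper's proof leaves implicit, and it is sound.
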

\begin{proof} It is a well-known theorem of Malcev that polycyclic-by-finite groups are ERF (see \cite{Lennox}, p.18). Therefore the statements follow now from Theorem \ref{WhenCompFiltrExists}, Corollary \ref{QuotientRF} and Corollary \ref{QuotientCenter}.
\end{proof}

As the existence of compatible filtrations implies that the corresponding amalgamated product is RF ([\cite{Baumslag}, Prop. 2]), we obtain the following corollary.

\begin{corollary} Suppose that $A$ is RF, $K$ is a subgroup in both $A$ and $B$ and is central in $B$. Then $A\ast_K B$ is RF if any of the following conditions holds:

(i) $K = Z(A)$, $B$ is polycyclic-by-finite;

(ii) $K$ is normal in $A$ and $A/K$ is RF, $B$ is polycyclic-by-finite;

(iii) (Baumslag [\cite{Baumslag}, Th. 8], Wehrfritz [\cite{Wehrfritz}, Th. 6]) $A$, $B$ are polycyclic-by-finite.
\end{corollary}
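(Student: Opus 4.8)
The plan is to derive this immediately from the material already assembled, together with Baumslag's criterion. First I would check that in each of the three cases the hypotheses of Corollary~\ref{WhenCompFiltrExists2} are met, taking $H=K$ and $\phi=\mathrm{id}_K$: case (i) here is case (i) there; case (ii) here is case (ii) there; and in case (iii) one recalls that polycyclic-by-finite groups are RF (Malcev), so that the standing hypothesis ``$A$ is RF'' is automatic and case (iii) there applies. Consequently, in all three cases there exist a $K$-filtration $\{A_\lambda\}_{\lambda\in\Lambda}$ of $A$ and a $K$-filtration $\{B_\lambda\}_{\lambda\in\Lambda}$ of $B$ which are $(K,K,\mathrm{id})$-compatible.

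The second and final ingredient is Baumslag's observation [\cite{Baumslag}, Prop.~2] that the existence of compatible filtrations forces the amalgamated product to be RF. I would recall the mechanism: fix $\lambda\in\Lambda$, let $q_A\colon A\to A/A_\lambda$ and $q_B\colon B\to B/B_\lambda$ be the quotient maps; by $(K,K,\mathrm{id})$-compatibility the images $q_A(K)=KA_\lambda/A_\lambda$ and $q_B(K)=KB_\lambda/B_\lambda$ are identified via $q_A(k)\mapsto q_B(k)$, so by the universal property of the amalgamated free product $q_A$ and $q_B$ assemble into a surjection
\[
\Phi_\lambda\colon A\ast_K B \longrightarrow (A/A_\lambda)\ast_{\,KA_\lambda/A_\lambda}(B/B_\lambda),
\]
whose target is an amalgamated free product of two finite groups over a common finite subgroup, hence RF (by Baumslag's theorem on amalgams over finite subgroups, or because such a group is virtually free). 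Given $1\ne w\in A\ast_K B$, one writes $w$ in reduced form; using $\bigcap A_\lambda=\{e\}$, $\bigcap B_\lambda=\{e\}$, $\bigcap KA_\lambda=K$, $\bigcap KB_\lambda=K$ one chooses $\lambda$ so that $\Phi_\lambda(w)$ is still in reduced form of positive length, or is a nontrivial element of one factor, hence $\Phi_\lambda(w)\ne 1$; composing $\Phi_\lambda$ with a finite quotient of its target that separates $\Phi_\lambda(w)$ from the identity then exhibits a finite quotient of $A\ast_K B$ on which $w$ survives.

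Since essentially all the work --- producing the compatible filtrations --- has already been done in Theorem~\ref{WhenCompFiltrExists} and Corollary~\ref{WhenCompFiltrExists2}, there is no genuine obstacle; the only points requiring a moment's care are the bookkeeping of which case of Corollary~\ref{WhenCompFiltrExists2} to invoke and the observation that in case (iii) the RF hypothesis on $A$ comes for free. The one mildly technical step, were one to avoid quoting [\cite{Baumslag}, Prop.~2] outright, is the reduced-form/length argument sketched above that shows $\Phi_\lambda(w)\ne1$ for suitable $\lambda$.
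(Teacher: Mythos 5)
Your proposal is correct and follows essentially the same route as the paper: the paper obtains this corollary as an immediate consequence of Corollary \ref{WhenCompFiltrExists2} (compatible filtrations exist in each of the three cases, with the RF hypothesis on $A$ automatic in case (iii)) combined with Baumslag's Proposition 2 that compatible filtrations imply residual finiteness of the amalgam. Your extra sketch of the mechanism behind Baumslag's Proposition 2 is fine but not needed, since the paper simply cites it.
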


\section{Approximation of characters induced from the center}

In \cite{DonTanya2} D. Hadwin and the author proved the following approximation property for characters induced from the center of an RF group: If $\lambda$ is a character of the center (or of any central subgroup) $H$ and $\tilde\lambda =  \begin{cases} \lambda(a), a\in H \\ 0, a\in A\setminus H \end{cases}$, then there exist finite-dimensional representations $\pi_n$ of $G$ such that $$\tilde\lambda(g) = \lim_{n\to \infty} tr\; \pi_n(g),$$ for any element $g$ of the group. Here we will prove a refined version of this approximation property in the case when $\lambda$ is a 1-dimensional representation of a central subgroup. Namely for two (or finitely many) groups  with a common central subgroup $H$  we will construct approximations as above so that  the corresponding representations would coincide on $H$ and take scalar values on it. However for that we will need a stronger assumption than RF, namely  we will assume the existence of compatible filtrations.
 This refined approximation property will be crucial for the proof of results of section 6.

\begin{lemma}\label{LemmaRefinedTracesAppr} Let $A$ and $B$ be groups, $H$ and $K$ be their finitely generated central subgroups respectively, and $\phi: H \to K$ an isomorphism such that there exist $(H, K, \phi)$-compatible filtrations of $A$ and $B$. Let $\lambda$ be a 1-dimensional representation of $H$. Let $h_1, \ldots, h_N$ be generators of $H$,  $a_1, \ldots, a_n\in A \setminus H$, $b_1, \ldots, b_m\in B\setminus K$, $\epsilon >0$. Then there exist finite groups $G^A$, $G^B$, surjective homomorphisms $f^A: A \to G^A$, $f^B: B \to G^B$, and 1-dimensional representations $\chi^A$
and $\chi^B$ of $f^A(H)$ and $f^B(K)$ respectively, such that  $$\chi^A(f^A(h)) = \chi^B(f^B(\phi(h))),$$ for any $h\in H$,
$$|\chi^A(f^A(h_i)) - \lambda(h_i)| \le \epsilon,$$ for any $i=1, \ldots, N$, and
$$f^A(a_i)\notin f^A(H), \; f^B(b_j) \notin f^B(K),$$ for any $i=1, \ldots, n$, $j=1, \ldots, m$.
\end{lemma}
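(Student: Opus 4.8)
The plan is to produce $f^A$ and $f^B$ as the quotient maps by a single, carefully chosen common member of an $(H,K,\phi)$-compatible pair of filtrations, and then to obtain $\chi^A$ and $\chi^B$ by pushing forward one suitable torsion character of $H$. First I would unwind the hypothesis: if $\{A_\nu\}_{\nu\in\Lambda}$ and $\{B_\nu\}_{\nu\in\Lambda}$ are $(H,K,\phi)$-compatible filtrations, then, reading off well-definedness and injectivity of the maps in Definition~\ref{filtration}, compatibility of the pair $(A_\nu,B_\nu)$ is precisely the equality $\phi(A_\nu\cap H)=B_\nu\cap K$. Since $\phi\colon H\to K$ is an isomorphism, and hence commutes with intersections, these equalities survive passage to finite intersections, i.e. $\phi\bigl((\bigcap_{\nu\in S}A_\nu)\cap H\bigr)=(\bigcap_{\nu\in S}B_\nu)\cap K$ for every finite $S\subseteq\Lambda$; this is the observation that will let me refine the filtrations freely. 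I would also record that, being an $H$-filtration, $\{A_\nu\}$ satisfies $\bigcap_\nu HA_\nu=H$, so each $a_i\notin H$ admits an index $\nu_i'$ with $a_i\notin HA_{\nu_i'}$, and likewise each $b_j\notin K$ admits an index $\nu_j''$ with $b_j\notin KB_{\nu_j''}$.

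The core of the proof is the approximation of $\lambda$ through the filtration. As $H$ is central it is abelian, and it is finitely generated by hypothesis, so $\lambda$ is a point of the compact Pontryagin dual $\widehat H$. Put $L_\nu:=A_\nu\cap H$, a finite-index subgroup of $H$; since $\{A_\nu\}$ is a filtration, $\bigcap_\nu L_\nu\subseteq\bigcap_\nu A_\nu=\{e\}$. By Pontryagin duality the subgroup of $\widehat H$ generated by the union of the annihilators $L_\nu^{\perp}$ has closure $(\bigcap_\nu L_\nu)^{\perp}=\widehat H$, hence is dense in $\widehat H$. Consequently there is a character $\chi\in\widehat H$ with $|\chi(h_i)-\lambda(h_i)|\le\epsilon$ for $i=1,\dots,N$ (only $h_i\in H$ is used here, not that they generate), and being an element of that generated subgroup, $\chi$ is trivial on some finite intersection $L_{\nu_1}\cap\dots\cap L_{\nu_p}$.

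To finish I would set $S=\{\nu_1,\dots,\nu_p,\nu_1',\dots,\nu_n',\nu_1'',\dots,\nu_m''\}$, let $A_*=\bigcap_{\nu\in S}A_\nu\triangleleft_f A$ and $B_*=\bigcap_{\nu\in S}B_\nu\triangleleft_f B$, and take $f^A\colon A\to G^A:=A/A_*$ and $f^B\colon B\to G^B:=B/B_*$ to be the quotient maps onto these finite groups. By the intersection observation $\phi(A_*\cap H)=B_*\cap K$, so $\phi$ descends to an isomorphism $\bar\phi\colon f^A(H)\to f^B(K)$ with $\bar\phi(f^A(h))=f^B(\phi(h))$ for all $h\in H$; and from $a_i\notin HA_{\nu_i'}\supseteq HA_*$ and $b_j\notin KB_{\nu_j''}\supseteq KB_*$ one gets $f^A(a_i)\notin f^A(H)$ and $f^B(b_j)\notin f^B(K)$. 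Since $A_*\cap H\subseteq L_{\nu_1}\cap\dots\cap L_{\nu_p}$, the character $\chi$ is trivial on $A_*\cap H$, so it descends to a $1$-dimensional representation $\chi^A$ of $f^A(H)\cong H/(A_*\cap H)$ with $\chi^A(f^A(h))=\chi(h)$; setting $\chi^B=\chi^A\circ\bar\phi^{-1}$ then yields $\chi^B(f^B(\phi(h)))=\chi^A(f^A(h))$ for every $h\in H$ together with $|\chi^A(f^A(h_i))-\lambda(h_i)|=|\chi(h_i)-\lambda(h_i)|\le\epsilon$, which is everything claimed.

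The extraction of the separating indices $\nu_i',\nu_j''$ is routine, as is the bookkeeping at the last step; the one genuinely delicate point is the approximation, namely arranging that the finite quotient of $H$ seen through the filtration of $A$ is fine enough to approximate the prescribed character $\lambda$, while the forced compatibility with the filtration of $B$ --- which is exactly what makes $f^B(K)\cong f^A(H)$ and hence determines $\chi^B$ canonically from $\chi^A$ --- is not spoiled by the refining intersections. This is precisely why the argument needs both that $(H,K,\phi)$-compatibility is stable under finite intersections and, via Pontryagin duality, that the characters of $H$ factoring through the filtration $\{A_\nu\cap H\}_{\nu\in\Lambda}$ are dense in $\widehat H$.
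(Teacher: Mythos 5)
Your proof is correct, and it reaches the statement by a somewhat different route than the paper. You work directly with the compatible filtrations: you observe that compatibility of a pair $(A_\nu,B_\nu)$ is exactly the equality $\phi(A_\nu\cap H)=B_\nu\cap K$, that this is preserved under finite intersections, and you then pass to a single common finite quotient; the approximating character is produced abstractly, via Pontryagin duality, from the density in $\widehat H$ of the subgroup generated by the annihilators $(A_\nu\cap H)^{\perp}$ (which follows from $\bigcap_\nu (A_\nu\cap H)=\{e\}$), so that some character close to $\lambda$ at $h_1,\dots,h_N$ is trivial on a finite intersection of the $A_\nu\cap H$ and hence descends to $f^A(H)$. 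The paper instead routes through the combinatorial reformulation of compatible filtrations (Proposition \ref{ReformulationFiltr}) and builds the character explicitly: it decomposes $H=\mathbb Z^{s}\times\Gamma$, forces the finite quotient $f^A(H)=\mathbb Z_{k_1}\times\cdots\times\mathbb Z_{k_s}\times\tilde\Gamma$ to have $k_i$ large and $\tilde\Gamma\cong\Gamma$ by separating sufficiently many elements of $H$, and then approximates $e^{2\pi i\theta_i}$ by $k_i$-th roots of unity with an elementary estimate at the generators. Your duality argument buys a cleaner proof with no structure-theorem bookkeeping (no $k_{0,i}$, no induction estimate, and no need for the $h_i$ to generate $H$ at this stage), at the cost of invoking the annihilator/duality machinery; the paper's construction is more elementary and explicit, and its use of Proposition \ref{ReformulationFiltr} keeps the quotient-building uniform with how filtrations are used elsewhere in Section 3. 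Both proofs hinge on the same two facts you isolate at the end: the amalgam condition $\phi(A_*\cap H)=B_*\cap K$ survives refinement, and characters of $H$ factoring through the filtration are plentiful enough to approximate $\lambda$.
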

\begin{proof}
Since $H$ is a finitely generated abelian group, it can be written  as $$H = \mathbb Z^{s}\times \Gamma,$$ where $s\in \mathbb N$ and $\Gamma$ is a finite abelian group.
So we can write $h_j = (n_1^j, n_2^j, \ldots, n_s^j, t^j)$ with $n_i^j\in \mathbb Z$, $t^j\in \Gamma$, $j\le N$. Let $\mathbb Z^{(i)}$ denote the i-th copy of $\mathbb Z$ in $H$.
For each $i\le s$ there is $\theta_i$ such that
\begin{equation}\label{RF2}\lambda |_{\mathbb Z^{(i)}}(n) = e^{2\pi i n \theta_i}.\end{equation}
Let $$L^{(i)} = \max_{j\le N} |n_i^{j}|,$$ $i = 1, \ldots, s$. For each $i\le s$ there exists  $k_{0, i}$ such that for any $k\ge k_{0, i}$, the $k$-th roots of unity form an $\frac{\epsilon}{s(L^{(i)} +1)}$-net in the unit circle.

By Lemma \ref{ReformulationFiltr}  there  exist  finite groups $G^A$ and $G^B$ and  surjective homomorphisms $f^A: A\to G^A$ and $f^B: B \to G^B$ such that  \begin{equation}\label{LemmaRefinedTracesAppr1}f^A(n_1, \ldots, n_s, t) \neq f^A(n_1', \ldots, n_s', t'),\end{equation} when $t\in \Gamma$, $n_i, n_i' \le k_{0, i}$, and the tuples $(n_1, \ldots, n_s, t)$ and $(n_1', \ldots, n_s', t')$ do not coincide;
\begin{equation}\label{LemmaRefinedTracesAppr2} f^A(a_i)\notin f^A(H), \; f^B(b_j) \notin f^B(K),
\end{equation}
for any $i=1, \ldots, n$, $j=1, \ldots, m$; and
\begin{equation}\label{LemmaRefinedTracesAppr3} f^A(h) \mapsto f^B(\phi(h))\end{equation} is an isomorphism between $f^A(H)$ and $f^B(K)$.

  As $f^A(H) \cong (\prod_{i\le s}f^A(\mathbb Z^{(i)})) \times f^A(\Gamma)$, we can write $$f^A(H) = \mathbb Z_{k_1}\times \ldots \times \mathbb Z_{k_s} \times \tilde\Gamma, $$ for some $k_1, \ldots, k_s \in \mathbb N$ and some finite abelian group $\tilde\Gamma$. It follows from (\ref{LemmaRefinedTracesAppr1}) that $k_i \ge k_{0, i}$ and that $|\tilde \Gamma| \ge |\Gamma|$. Since $\tilde \Gamma$ is a homomorphic image of $\Gamma$, the latter implies that $\tilde \Gamma \cong \Gamma$. The first inequality, $k_i \ge k_{0, i}$, implies that there is $l_i< k_i$ such that
\begin{equation}\label{RF3} |e^{2\pi i l_i/k_i} - e^{2 \pi i \theta_i}| \le \frac{\epsilon}{s(L^{(i)}+1)}.\end{equation}

 Define  a 1-dimensional representation $\chi_i$ of $\mathbb Z_{k_i}$ by $$\chi_i(m \bmod k_i) = e^{2\pi im l_i/k_i},$$  $m \in \mathbb Z$. Using (\ref{RF3}),  for any $m$  we easily  obtain by induction that $$|\chi_i(m \bmod k_i) - \lambda |_{\mathbb Z^{(i)} } (m) | =
|e^{2\pi iml_i/k_i} - e^{2\pi i m \theta_i}|  \le \frac{\epsilon (m+1) }{s(L^{(i)}+1)}.$$
In particular for any $m\le L^{(i)}$ we obtain \begin{equation}\label{RF4}|\chi_i(m \bmod k_i) - \lambda |_{\mathbb Z^{(i)} } (m) | \le \frac{\epsilon  }{s}.\end{equation}

Define a 1-dimensional representation $ \chi^A$ of $f^A(H)$
by $$\chi^A(f^A(n_1, \ldots, n_s, t)) = \chi_1(n_1 \bmod k_1)\ldots\chi_s(n_s \bmod k_s)\lambda(t),$$ for all $n_i \in \mathbb Z, t\in \Gamma$. From (\ref{RF4}) we easily obtain that for any $n_i \le L^{(i)}$, $t\in \Gamma$, $$|\chi^A(f^A(n_1, \ldots, n_s, t)) - \lambda(n_1, \ldots, n_s, t)| \le \epsilon.$$ Hence \begin{equation}\label{LemmaRefinedTracesAppr4} |\chi^A(f^A(h_i)) - \lambda(h_i)| \le \epsilon,\end{equation} for $i = 1, \ldots, N$. Using (\ref{LemmaRefinedTracesAppr3}) we can define a 1-dimensional representation $\chi^B$ of $f^B(K)$ by
$$\chi^B(f^B(\phi(h))) = \chi^A(f^A(h)),$$ $h\in H$.
This, together with (\ref{LemmaRefinedTracesAppr2}) and (\ref{LemmaRefinedTracesAppr4}),  completes the proof.
\end{proof}

\begin{theorem}\label{TracesInducedFromCenter} Let $A$ and $B$ be groups, $H$ and $K$ be their finitely generated central subgroups respectively, and $\phi: H \to K$ an isomorphism such that there exist $(H, K, \phi)$-compatible filtrations of $A$ and $B$. Let $\lambda$ be a 1-dimensional representation of $H$,
$$\tilde\lambda^A (a)= \begin{cases} \lambda(a), a\in H \\ 0, a\in A\setminus H \end{cases},\;\; \tilde\lambda^B (b)= \begin{cases} \lambda(\phi^{-1}(b)), b\in K \\ 0, b\in B\setminus K \end{cases}.$$ Then there exist finite-dimensional representations $\pi_n^A, \pi_n^B$ of $A$ and $B$ respectively, such that for each $h\in H$, $\pi_n^A(h)$ and $\pi_n^B(\phi(h))$ are  scalar operators with the same scalars,  and
$$\tilde\lambda^A(a) = \lim tr \pi_n^A(a), \; \tilde\lambda^B(b) = \lim tr \pi_n^B(b),$$ for any $a\in A$, $b\in B$.
\end{theorem}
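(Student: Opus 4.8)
The plan is to feed the finite quotients produced by Lemma~\ref{LemmaRefinedTracesAppr} into representations induced from the amalgamating central subgroups, and then run an exhaustion (diagonal) argument over the countable groups $A$ and $B$. The one extra ingredient needed is an elementary fact about induction from a central subgroup of a finite group.

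That fact: let $G$ be a finite group, $D\le Z(G)$ a central subgroup, and $\chi$ a one-dimensional representation of $D$; put $\rho=\operatorname{Ind}_D^G\chi$, which has dimension $[G:D]$. Since $D$ is central, Frobenius reciprocity together with Schur's Lemma shows that every irreducible constituent of $\rho$ restricts to the scalar $\chi(d)$ on each $d\in D$ (equivalently, $\rho=\bigoplus_\sigma(\dim\sigma)\,\sigma$ over the irreducibles $\sigma$ of $G$ whose central character on $D$ is $\chi$); in particular $\rho(d)=\chi(d)\cdot\mathrm{id}$ for all $d\in D$. Moreover $D$ is normal in $G$, so the induced-character formula degenerates to $\chi_\rho(g)=[G:D]\,\chi(g)$ for $g\in D$ and $\chi_\rho(g)=0$ for $g\notin D$; dividing by $\dim\rho$, the normalised trace is $tr\,\rho(g)=\chi(g)$ for $g\in D$ and $tr\,\rho(g)=0$ for $g\notin D$.

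Now enumerate $A=\{x_1,x_2,\dots\}$ and $B=\{y_1,y_2,\dots\}$ and fix generators $h_1,\dots,h_N$ of $H$ (so $\phi(h_1),\dots,\phi(h_N)$ generate $K$). For each $n$, express every element of $\{x_1,\dots,x_n\}\cap H$, and every $\phi^{-1}(y)$ with $y\in\{y_1,\dots,y_n\}\cap K$, as a word in $h_1^{\pm1},\dots,h_N^{\pm1}$; let $\ell_n$ be the largest word length occurring and set $\epsilon_n=1/(n\ell_n)$. Let $a_1,\dots,a_p$ be the elements of $\{x_1,\dots,x_n\}$ not in $H$ and $b_1,\dots,b_q$ those of $\{y_1,\dots,y_n\}$ not in $K$. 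Apply Lemma~\ref{LemmaRefinedTracesAppr} with these data and $\epsilon=\epsilon_n$ to obtain finite groups $G^A,G^B$, surjections $f^A\colon A\to G^A$, $f^B\colon B\to G^B$, and one-dimensional representations $\chi^A$ of $f^A(H)$, $\chi^B$ of $f^B(K)$ satisfying $\chi^A(f^A(h))=\chi^B(f^B(\phi(h)))$ for all $h\in H$, $|\chi^A(f^A(h_i))-\lambda(h_i)|\le\epsilon_n$ for $i\le N$, and $f^A(a_i)\notin f^A(H)$, $f^B(b_j)\notin f^B(K)$. Being images of central subgroups under surjections, $f^A(H)$ and $f^B(K)$ are central, hence normal, in $G^A$ and $G^B$. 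Put
$$\pi_n^A:=\bigl(\operatorname{Ind}_{f^A(H)}^{G^A}\chi^A\bigr)\circ f^A,\qquad \pi_n^B:=\bigl(\operatorname{Ind}_{f^B(K)}^{G^B}\chi^B\bigr)\circ f^B .$$

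It remains to verify the two conclusions. For $h\in H$ the building block gives $\pi_n^A(h)=\chi^A(f^A(h))\cdot\mathrm{id}$ and $\pi_n^B(\phi(h))=\chi^B(f^B(\phi(h)))\cdot\mathrm{id}=\chi^A(f^A(h))\cdot\mathrm{id}$, i.e. scalars with the same scalar. For the trace convergence, observe that $a\mapsto\chi^A(f^A(a))$ and $\lambda$ are homomorphisms from $H$ to the unit circle agreeing to within $\epsilon_n$ on each generator, so a telescoping estimate (all values have modulus $1$) yields $|\chi^A(f^A(a))-\lambda(a)|\le(\text{word length of }a)\cdot\epsilon_n$ for every $a\in H$. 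Hence for $a$ among $x_1,\dots,x_n$: if $a\in H$ then $tr\,\pi_n^A(a)=\chi^A(f^A(a))$ with $|tr\,\pi_n^A(a)-\tilde\lambda^A(a)|\le\ell_n\epsilon_n=1/n$, while if $a\notin H$ then $f^A(a)$ is one of the $f^A(a_i)\notin f^A(H)$, so $tr\,\pi_n^A(a)=0=\tilde\lambda^A(a)$; thus $tr\,\pi_n^A\to\tilde\lambda^A$ pointwise, and the argument for $B$ is identical, using $\tilde\lambda^B(\phi(h))=\lambda(h)=\lim\chi^B(f^B(\phi(h)))$. The only genuinely delicate point is that Lemma~\ref{LemmaRefinedTracesAppr} controls $\chi^A\circ f^A\approx\lambda$ only on the generators; this is absorbed by the word-length telescoping and by choosing $\epsilon_n$ small enough at stage $n$. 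Everything else is routine --- it is exactly the centrality of $H$ and $K$ that makes their images normal downstairs, which is what forces the induced characters to be ``scalar on the subgroup, zero outside it''.
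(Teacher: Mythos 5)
Your proposal is correct and follows essentially the same route as the paper's proof: feed Lemma~\ref{LemmaRefinedTracesAppr} into representations induced from the (central, hence normal) images $f^A(H)$, $f^B(K)$, use centrality to get scalarity on $H$ and vanishing of the normalized trace off $H$, and control the values on $H$ by the word-length telescoping from the generators. The only difference is presentational: you make the exhaustion over enumerations of $A$ and $B$ explicit, while the paper reduces to the corresponding finite approximation statement (finitely many elements, one $\tilde\epsilon$) at the outset.
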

\begin{proof} It will be sufficient, given any $\tilde h_1, \ldots, \tilde h_M\in H$,  $a_1, \ldots, a_n\in A \setminus H$, $b_1, \ldots, b_m\in B\setminus K$, $\tilde\epsilon >0$, to find finite-dimensional representations $\pi^A$ and $\pi^B$ of $A$ and $B$ respectively such that:

1) $\pi^A(h)\;\text{and}\;\pi^B(\phi(h))$ are scalar operators with the same scalars, for each $ h\in H,$

2) $|tr \pi^A(\tilde h_i) - \lambda(\tilde h_i)|< \tilde \epsilon, \text{for any} \;i=1, \ldots, M,$

\noindent (and then automatically $|tr \pi^B(\phi(\tilde h_i)) - \lambda(\tilde h_i)|< \tilde \epsilon, \text{for any} \;i=1, \ldots, M$),
and

3) $|tr \pi^A(a_i)| < \tilde \epsilon, \; |tr \pi^B(b_j)|<\tilde\epsilon,$ for any $i=1, \ldots, n$, $j=1, \ldots, m$.

Let $h_1, \ldots, h_N$ be generators of $H$ and let $L$ be the maximal length of $\tilde h_i$'s, $i = 1, \ldots, M$, written as monomials of the generators $h_1, \ldots, h_N$. Let $$\epsilon = \frac{\tilde\epsilon}{L}.$$ We find $f^A, f^B, G^A, G^B, \chi^A, \chi^B$ as in Lemma \ref{LemmaRefinedTracesAppr}.
Let $Ind \chi^A$ be the representation of $G^A$ induced from $\chi^A$  and let $Ind \chi^B$ be the representation of $G^B$ induced from  $\chi^B$. Let
$$\pi^A = Ind \chi^A \circ f^A, \;\; \pi^B = Ind \chi^B \circ f^B.$$ Then
\begin{equation}\label{RefinedTracesAppr1} \pi^A(h) = \chi^A(f^A(h)) 1\end{equation}
 and by Lemma \ref{LemmaRefinedTracesAppr}
\begin{equation}\label{RefinedTracesAppr01} \pi^B(\phi(h) = \chi^B(f^B(\phi(h))) 1  = \chi^A(f^A(h)) 1,\end{equation} $h\in H$.
Again by  Lemma \ref{LemmaRefinedTracesAppr} we obtain
\begin{equation}\label{RefinedTracesAppr2} |tr \pi^A(\tilde h_i) - \lambda(\tilde h_i)| = |\chi^A(f^A(\tilde h_i)) - \lambda(\tilde h_i)| \le L \; max_{i\le N} |\chi^A(f^A(h_i)) - \lambda^A(h_i)|\le L\epsilon = \tilde\epsilon\end{equation} and also
\begin{equation}\label{RefinedTracesAppr3}|tr \pi^B(\phi(\tilde h_i)) - \lambda(\tilde h_i)|< \tilde \epsilon,\end{equation}
for any $i=1, \ldots, M.$

Since $f^A(a_i)\notin f^A(H)$ and $f^A(H)$ is central in $G^A$, it follows from definition of induced representations that all diagonal entries of $\pi^A(a_i)$ are zero, so that we have
\begin{equation}\label{RefinedTracesAppr4} tr \pi^A(a_i) =0.\end{equation}
Similarly \begin{equation}\label{RefinedTracesAppr5} tr \pi^B(b_i) =0.\end{equation}
By (\ref{RefinedTracesAppr1}), (\ref{RefinedTracesAppr01})
(\ref{RefinedTracesAppr2}), (\ref{RefinedTracesAppr3}), (\ref{RefinedTracesAppr4}), (\ref{RefinedTracesAppr5}) we are done.

\end{proof}

In particular case when $A=B$, $H=K$ we obtain

\begin{corollary}\label{CorollaryTracesInducedFromCenter} Let $A$  be a group and $H$ be its finitely generated central subgroup such that  there exists an $H$-filtration of $A$. Let $\lambda$ be a 1-dimensional representation of $H$,
$$\tilde\lambda (a)= \begin{cases} \lambda(a),\; a\in H \\ 0,\; a\notin  H \end{cases}.$$ Then there exist finite-dimensional representations $\pi_n, $ of $A$ such that for each $h\in H$, $\pi_n(h)$ is a  scalar operator  and
$$\tilde\lambda(a) = \lim tr\; \pi_n(a),$$ for any $a\in A$.
\end{corollary}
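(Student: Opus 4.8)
The plan is to derive this corollary directly from Theorem \ref{TracesInducedFromCenter} by specializing to the diagonal case $A = B$, $H = K$, and $\phi = \mathrm{id}_H$. First I would observe that the hypothesis of Theorem \ref{TracesInducedFromCenter} in this situation reduces to the existence of a single $H$-filtration of $A$: indeed, the notion of $(H, H, \mathrm{id})$-compatible filtrations of $A$ and $A$ is satisfied by taking the same $H$-filtration $\{A_\lambda\}$ twice, since then the map $hA_\lambda \mapsto hA_\lambda$ is trivially a well-defined isomorphism of $HA_\lambda / A_\lambda$ with itself for each $\lambda$. So the assumption ``there exists an $H$-filtration of $A$'' is exactly what is needed to invoke Theorem \ref{TracesInducedFromCenter} with $B = A$, $K = H$, $\phi = \mathrm{id}$.

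Next I would apply the theorem verbatim. It produces finite-dimensional representations $\pi_n^A, \pi_n^B$ of $A$ (both of $A$, since $B = A$) such that $\pi_n^A(h)$ and $\pi_n^B(\phi(h)) = \pi_n^B(h)$ are scalar operators with the same scalars for each $h \in H$, and such that $\tilde\lambda^A(a) = \lim \mathrm{tr}\,\pi_n^A(a)$ for all $a \in A$ and $\tilde\lambda^B(b) = \lim \mathrm{tr}\,\pi_n^B(b)$ for all $b \in A$. Here $\tilde\lambda^A$ is precisely the function $\tilde\lambda$ of the corollary statement, and $\tilde\lambda^B(b) = \lambda(\phi^{-1}(b)) = \lambda(b)$ for $b \in K = H$ and $0$ otherwise, so $\tilde\lambda^B = \tilde\lambda$ as well. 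Setting $\pi_n := \pi_n^A$ then immediately gives finite-dimensional representations of $A$ with $\pi_n(h)$ scalar for each $h \in H$ and $\tilde\lambda(a) = \lim \mathrm{tr}\,\pi_n(a)$ for all $a \in A$, which is exactly the assertion.

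Since everything is a direct specialization, there is essentially no obstacle here; the only point requiring a line of justification is the reduction of the compatibility hypothesis noted above, and that is routine from Definition \ref{filtration}. One could alternatively rerun the proofs of Lemma \ref{LemmaRefinedTracesAppr} and Theorem \ref{TracesInducedFromCenter} with $A = B$ throughout — using Corollary \ref{QuotientRF} or the fact that a single $H$-filtration suffices in place of Proposition \ref{ReformulationFiltr} for two groups — but invoking the already-proved theorem is cleaner. I would therefore write the proof of the corollary as a one-sentence reduction.
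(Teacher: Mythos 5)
Your proposal is correct and is exactly the paper's route: the corollary is stated as the specialization of Theorem \ref{TracesInducedFromCenter} to $A=B$, $H=K$, $\phi=\mathrm{id}$, and your observation that a single $H$-filtration taken twice is automatically $(H,H,\mathrm{id})$-compatible is the only point that needs saying. Nothing further is required.
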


\begin{remark} By taking appropriate multiples of the representations $\pi_n^A, \pi_n^B$ in Theorem \ref{TracesInducedFromCenter} one can arrange them to live on the same space.
\end{remark}

\section{Auxiliary results on group C*-algebras}

Let $G$ be a discrete group and let $C$ be its normal subgroup such that $G/C$ is amenable.
For a given 1-dimensional representation $\lambda$ of $C$ we introduce a norm on $\mathbb C G$ by
$$\|f\|_{\lambda} = \sup \|\pi(f)\|$$
 where supremum is taken over all representations $\pi$ such that $\pi\;|_C = \lambda 1.$  We are going to prove that any u.c.p. map $\Phi$ from $C^*(G)$ to any unital $C^*$-algebra such that $\Phi\;|_C = \lambda 1$ factorizes through $\Lambda_{\tilde\lambda}$.
In particular case when $C$ is central, this will imply that
the $C^*$-algebra $\overline{\mathbb C G}^{\|\|_{\lambda}}$ obtained as the completion of $\mathbb C G$ by the norm $\|\;\|_{\lambda}$ coincides with $\Lambda_{\tilde\lambda}(C^*(G))$. In the case $C= \{e\}$ this gives us the well-known $C^*(G) = C^*_r(G)$.

\medskip

  To prove this we will modify the proof  of [\cite{Davidson}, Th. 7.2.8].

\begin{lemma}\label{MatrixElement}
Let $G$ be a discrete group, $C$ its subgroup, and $\lambda$ a one-dimensional representation of $C$. Let $\psi$ be a positive-definite function on $G$ supported in the union of finitely many cosets of $C$ and such that
$\psi(cg) = \lambda(c)\psi(g).$ Then there is a unit vector $\eta\in H_{\tilde\lambda}$ such that
$$\psi(g) = \langle\Lambda_{\tilde\lambda}(g)\eta, \eta\rangle,$$ for any $g\in G$.
\end{lemma}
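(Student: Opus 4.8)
## Proof proposal for Lemma \ref{MatrixElement}

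The plan is to realize $\psi$ as a diagonal matrix coefficient of the GNS representation $\Lambda_{\tilde\lambda}$ by exhibiting the right vector directly, and then to check positivity of the candidate vector using the hypothesis that $\psi$ transforms by $\lambda$ under left multiplication by $C$. First I would fix a set $g_1, g_2, \ldots$ of representatives of the left cosets of $C$, so that by Lemma \ref{basis} the vectors $e_i := g_i + N_{\tilde\lambda}$ form an orthonormal basis of $H_{\tilde\lambda}$. Since $\psi$ is supported in the union of finitely many cosets of $C$, only finitely many indices $i$ are relevant; reindex so that $\psi$ is supported in $\bigcup_{i=1}^{n} g_i C$. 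Because $\psi(cg) = \lambda(c)\psi(g)$, the value of $\psi$ on the coset $g_i C$ is determined by the single number $\psi(g_i)$: indeed for $c \in C$ one has $\psi(g_i c) = \psi\big(g_i c g_i^{-1}\cdot g_i\big)$, but more simply, writing an arbitrary element of $g_iC$ and using the covariance relation (together with, if needed, the fact that $\psi(gc) = \overline{\psi(c^{-1}g^{-1})}$ reduces right translation to left translation on $\psi$ via $g\mapsto g^{-1}$ and the positive-definiteness symmetry $\psi(g^{-1}) = \overline{\psi(g)}$), one expresses $\psi|_{g_iC}$ through $\psi(g_i)$.

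The natural candidate is $\eta = \sum_{i=1}^{n} \overline{c_i}\, e_i$ for suitable scalars $c_i$ built from the numbers $\psi(g_i)$, and I would verify the identity $\psi(g) = \langle \Lambda_{\tilde\lambda}(g)\eta, \eta\rangle$ by expanding the right-hand side: $\langle \Lambda_{\tilde\lambda}(g)\eta,\eta\rangle = \sum_{i,j} c_j \overline{c_i}\, \langle \Lambda_{\tilde\lambda}(g) e_j, e_i\rangle$, and computing $\langle \Lambda_{\tilde\lambda}(g) e_j, e_i\rangle = \langle g g_j + N_{\tilde\lambda}, g_i + N_{\tilde\lambda}\rangle = \tilde\lambda(g_i^{-1} g g_j)$, which is nonzero only when $g_i^{-1} g g_j \in C$, i.e. when $g g_j$ lies in the coset $g_i C$. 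Matching these terms against the coset decomposition of $\psi$ and using the covariance $\psi(cg) = \lambda(c)\psi(g)$ to pull the $\tilde\lambda$-factors through should give exactly $\psi(g)$. The cleanest way to organize this is probably to note that $\psi$ itself defines a positive linear functional on $\mathbb{C}G$ (hence on $C^*(G)$) whose GNS representation is a subrepresentation of $\Lambda_{\tilde\lambda}$ — the subspace spanned by $\{e_i : 1\le i \le n\}$ is $\Lambda_{\tilde\lambda}(C^*(G))$-cyclic-ish for the relevant vector — and then the matrix-coefficient form is automatic from GNS theory; but I would rather keep the computation explicit.

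The main obstacle, and the step requiring genuine care, is establishing that the candidate vector $\eta$ is well-defined with $\|\eta\| = 1$, i.e. that the finite matrix $[\psi(g_i^{-1} g_j)]_{i,j=1}^n$ — which governs the inner products $\langle \Lambda_{\tilde\lambda}(g_j) e_{\mathrm{triv}}, \cdots\rangle$ type expressions — is positive semidefinite of the right shape, and that one can choose the $c_i$ accordingly (this is where positive-definiteness of $\psi$ on $G$ enters decisively, restricted to the finite set $\{g_1,\ldots,g_n\}$, combined with the $\lambda$-covariance to handle the $C$-direction). Concretely I expect to show that the positive definite function $\psi$, being supported on finitely many cosets and $\lambda$-covariant, corresponds under the standard bijection to a positive functional on $C^*(G)$ that factors through a finite-rank-vector coefficient of $\Lambda_{\tilde\lambda}$; normalizing $\psi(e)$ to handle $\|\eta\|=1$ is harmless since positive-definiteness gives $\psi(e)\ge 0$ and the statement presumably intends $\psi$ normalized (or one rescales). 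Once $\eta$ is in hand and normalized, the identity $\psi(g) = \langle \Lambda_{\tilde\lambda}(g)\eta,\eta\rangle$ follows by the bookkeeping indicated above.
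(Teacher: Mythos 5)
Your central step does not go through. You propose to take $\eta$ as a \emph{finite} linear combination $\sum_{i=1}^n \overline{c_i}\,(g_i+N_{\tilde\lambda})$ of basis vectors indexed by the cosets supporting $\psi$, and to get the scalars $c_i$ from positive semidefiniteness of the matrix $[\psi(g_i^{-1}g_j)]_{i,j}$. But the identity $\psi(g)=\sum_{i,j}c_j\overline{c_i}\,\tilde\lambda(g_i^{-1}gg_j)$ forces a \emph{rank-one} factorization of $\psi$ (an autocorrelation of a finitely supported vector), and positive semidefiniteness only gives a sum of rank-one terms, not a single one. Already in the simplest admissible case $C=\{e\}$, $\lambda$ trivial, your claim amounts to a Fej\'er--Riesz type spectral factorization: every finitely supported positive-definite function on $G$ should be $\langle\lambda_G(g)\eta,\eta\rangle$ for a finitely supported $\eta$ with controlled support. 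This holds for $G=\mathbb Z$ but fails in general (e.g.\ for $G=\mathbb Z^2$ there are nonnegative trigonometric polynomials that are not the squared modulus of a single trigonometric polynomial), so the ansatz "choose the $c_i$ accordingly" cannot be completed. Your fallback remark --- that the GNS representation of $\psi$ should be a subrepresentation of $\Lambda_{\tilde\lambda}$, after which the coefficient form is automatic --- is not a shortcut either: that containment \emph{is} the content of the lemma, and you give no argument for it.

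The paper's proof supplies exactly the missing idea: it builds from $\psi$ a bounded \emph{positive operator in the commutant} of $\Lambda_{\tilde\lambda}(G)$, namely $T_{\psi}(g_i+N_{\tilde\lambda})=\sum_k\psi(g_k^{-1})(g_ig_k+N_{\tilde\lambda})$, a finite combination of right-translation unitaries; positivity of $T_\psi$ is where the positive-definiteness of $\psi$ and the relation $\psi(cg)=\lambda(c)\psi(g)$ are used. One then sets $\eta=T_{\psi}^{1/2}(e+N_{\tilde\lambda})$ and uses the commutation to get $\psi(g)=\langle\Lambda_{\tilde\lambda}(g)\eta,\eta\rangle$, with $\|\eta\|^2=\psi(e)=1$. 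Note that $T_\psi^{1/2}$ does not preserve finitely supported vectors, so the correct $\eta$ is in general \emph{not} of the finitely supported form you posit; the square-root-in-the-commutant device is precisely what replaces the (generally impossible) explicit rank-one factorization. If you want to salvage your write-up, replace the explicit ansatz by this operator argument (or an equivalent absolute-continuity/Radon--Nikodym argument in the commutant); the finite support of $\psi$ on cosets is needed only to make $T_\psi$ a finite sum, hence bounded.
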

\begin{proof} Fix some representatives $g_1, g_2, \ldots$ of the cosets of $C$. By Lemma \ref{basis} $g_i+N_{\tilde\lambda}$, $i\in \mathbb N$, form an orthonormal basis in $H_{\tilde\lambda}$.
We define a linear map $T_{\psi}$ on $H_{\tilde\lambda}$ by $$T_{\psi}(g_i+N_{\tilde\lambda}) = \sum_k\psi(g_k^{-1})(g_ig_k + N_{\tilde\lambda}),$$  $i\in \mathbb N$. Being a finite linear combination of the operators of right multiplication by $g_k$, which are unitary, $T_{\psi}$ is bounded. We now check that $T_{\psi}$ is positive. Indeed for any finite combination $\sum_i \xi_i (g_i+N_{\tilde\lambda})$ of basis vectors we have $$\langle T_{\psi}(\sum_i \xi_i(g_i+N_{\tilde\lambda})), \sum_k \xi_k(g_k+N_{\tilde\lambda})\rangle = \sum_{i, k, l} \xi_i\bar\xi_k \psi(g_l^{-1})\tilde\lambda(g_k^{-1}g_ig_l).$$
There is only one $l = l(i, k)$ such that $g_k^{-1}g_ig_l\in C$, and we can write $g_l = g_i^{-1}g_kc_{i, k}.$ Thus
\begin{multline*}\langle T_{\psi}(\sum_i \xi_i(g_i+N_{\tilde\lambda})), \sum_k \xi_k(g_k+N_{\tilde\lambda})\rangle = \sum_{i, k} \xi_i\bar\xi_k \psi(c_{i, k}^{-1}g_k^{-1}g_i)\lambda(c_{i, k})\\  = \sum_{i, k} \xi_i\bar\xi_k \psi(g_k^{-1}g_i)\lambda(c_{i, k}^{-1})\lambda(c_{i, k}) = \sum_{i, k} \xi_i\bar\xi_k \psi(g_k^{-1}g_i) \ge 0.\end{multline*} Hence $T_{\psi}$ is positive. Next we notice that being a linear combination of right multiplication operators $T_{\psi}$ commutes with $\Lambda_{\tilde\lambda}(G)$.
Let $\eta = T_{\psi}^{1/2}(e+N_{\tilde\lambda})$. Then $$ \langle \Lambda_{\tilde\lambda}(s)\eta, \eta\rangle = \langle s+N_{\tilde\lambda}, \sum_k \psi(g_k^{-1})(g_k+N_{\tilde\lambda})\rangle =
\sum_k\overline{\psi(g_k^{-1})}\tilde\lambda(g_k^{-1}s),$$ $s\in G$.
There is only one $k$ such that $g_k^{-1}s\in C$. For this $k$ we can write $g_k^{-1}s =c$ whence $g_k = sc^{-1}.$
Also as $\psi$ is positive-definite we have $\psi(g_k^{-1})= \overline{\psi(g_k)}$ and $\psi(s^{-1}c) = \lambda(c^{-1})\psi(g)$.  Hence
$$\langle \Lambda_{\tilde\lambda}(s)\eta, \eta\rangle = \psi(sc^{-1})\lambda(c) = \psi(s)\lambda(c^{-1})\lambda(c) = \psi(s),$$ $s\in G$.
Finally, as $\tilde\lambda(g_k) \neq 0$ only when $g_k=e$, we obtain \begin{multline*}\|\eta\|^2= \langle T_{\psi}^{1/2}(e+N_{\tilde\lambda}), T_{\psi}^{1/2}(e+N_{\tilde\lambda})\rangle = \langle T_{\psi}(e+N_{\tilde\lambda}), e+N_{\tilde\lambda}\rangle \\ = \sum_k \psi(g_k^{-1})\langle g_k+ N_{\tilde\lambda}, e+N_{\tilde\lambda}\rangle = \sum_k \psi(g_k^{-1})\tilde\lambda(g_k) = 1.\end{multline*}

\end{proof}

 Recall that for a u.c.p. map $\Phi:A \to B$ its  {\it multiplicative domain} is the set
 $$\it M_{\Phi} = \{a\in A\;|\; \Phi(ab) = \Phi(a)\Phi(b), \Phi(ba) = \Phi(b)\Phi(a),  \forall b\in A\}.$$

\begin{theorem}\label{factorization} Let $G$ be a discrete group, let $C$ be its normal subgroup such that $G/C$ is amenable, and let $\lambda$ be a 1-dimensional representation of $C$. Let $A$ be a unital $C^*$-algebra and $\Phi: C^*(G)\to A$ be a u.c.p. map such that $\Phi(c)  = \lambda(c) 1$, for any $c\in C$. Then
$$\|\Phi(f)\| \le \| \Lambda_{\tilde\lambda}(f)\|,$$ for any $f\in C^*(G)$.
\end{theorem}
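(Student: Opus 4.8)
The plan is to imitate the proof that $C^*(G)=C^*_r(G)$ for amenable $G$ (which is the case $C=\{e\}$), using Lemma~\ref{MatrixElement} in place of the usual identification of matrix coefficients of the regular representation. First, since $\Phi(c)=\lambda(c)1$ is unitary for every $c\in C$, and the multiplicative domain of a u.c.p. map contains every unitary whose image is unitary (Choi), the group $C$ lies in the multiplicative domain $M_\Phi$ of $\Phi$; hence $\Phi(cx)=\lambda(c)\Phi(x)$ for all $c\in C$ and $x\in C^*(G)$. Second, by the Kadison--Schwarz inequality $\|\Phi(f)\|^2=\|\Phi(f)^*\Phi(f)\|\le\|\Phi(f^*f)\|$, and both $\Phi(f^*f)$ and $\Lambda_{\tilde\lambda}(f^*f)=\Lambda_{\tilde\lambda}(f)^*\Lambda_{\tilde\lambda}(f)$ are positive; since moreover $\mathbb C G$ is norm dense in $C^*(G)$ and $\Phi,\Lambda_{\tilde\lambda}$ are norm decreasing, it is enough to prove
\[
\varphi(\Phi(b))\le\|\Lambda_{\tilde\lambda}(b)\|
\]
for every state $\varphi$ on $A$ and every $b=f^*f$ with $f\in\mathbb C G$.

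Fix $\varphi$ and $b$, and let $\omega(g)=\varphi(\Phi(g))$; this is a positive-definite function on $G$ with $\omega(e)=1$, and by the multiplicative-domain remark $\omega(cg)=\lambda(c)\omega(g)$ for $c\in C$, $g\in G$. Since $G/C$ is amenable, there is a net of finitely supported positive-definite functions $\theta_n$ on $G/C$ with $\theta_n(\bar e)=1$ and $\theta_n\to 1$ pointwise; for instance $\theta_n(\bar g)=|F_n\cap \bar g F_n|/|F_n|$ for a F{\o}lner net $(F_n)$ of $G/C$, which is a matrix coefficient of the quasi-regular representation. Pull these back along the quotient map $G\to G/C$ to positive-definite functions on $G$, still written $\theta_n$; because $C$ is normal one has $cgC=gC$, so $\theta_n(cg)=\theta_n(g)$, while $\theta_n$ is supported on finitely many cosets of $C$ and $\theta_n\to 1$ pointwise on $G$. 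Put $\omega_n=\omega\,\theta_n$. By the Schur product theorem $\omega_n$ is positive-definite; it is supported on finitely many cosets of $C$; it satisfies $\omega_n(cg)=\lambda(c)\omega(g)\theta_n(g)=\lambda(c)\omega_n(g)$; and $\omega_n(e)=1$. Therefore Lemma~\ref{MatrixElement} yields unit vectors $\eta_n\in H_{\tilde\lambda}$ with $\omega_n(g)=\langle\Lambda_{\tilde\lambda}(g)\eta_n,\eta_n\rangle$ for every $g\in G$.

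Writing $b=\sum_g b(g)g$ with finite support and using $\omega_n(g)\to\omega(g)$,
\[
\varphi(\Phi(b))=\sum_g b(g)\,\omega(g)=\lim_n\sum_g b(g)\,\omega_n(g)=\lim_n\langle\Lambda_{\tilde\lambda}(b)\eta_n,\eta_n\rangle,
\]
so $|\varphi(\Phi(b))|\le\|\Lambda_{\tilde\lambda}(b)\|$. Taking the supremum over states $\varphi$ and invoking the reduction of the first paragraph gives $\|\Phi(f)\|\le\|\Lambda_{\tilde\lambda}(f)\|$ for all $f\in\mathbb C G$, and hence for all $f\in C^*(G)$ by continuity.

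The only delicate point is to manufacture approximants that satisfy all of the hypotheses of Lemma~\ref{MatrixElement} at once: positive-definiteness, support in finitely many cosets of $C$, the twisted equivariance $\psi(cg)=\lambda(c)\psi(g)$, and pointwise convergence to $\omega$. The product $\omega_n=\omega\,\theta_n$ achieves this because $\theta_n$ descends from $G/C$, which forces coset-support and $C$-invariance on the nose, while amenability of $G/C$ supplies the finitely supported approximate identity $(\theta_n)$ of its Fourier algebra; multiplying by $\omega$ then reinstates the character $\lambda$ on $C$. The reduction from a general $f$ to the positive element $f^*f$ via Kadison--Schwarz, and the final density argument, are routine.
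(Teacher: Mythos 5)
Your proposal is correct and follows essentially the same route as the paper: reduce to positive elements of $\mathbb C G$ via the Schwarz inequality, use Choi's multiplicative-domain theorem to get $\Phi(cg)=\lambda(c)\Phi(g)$, multiply the positive-definite function coming from a state (the paper uses vector states after embedding $A\subseteq\mathcal B(H)$) by pullbacks of finitely supported positive-definite functions on the amenable quotient $G/C$, and apply Lemma~\ref{MatrixElement} to realize the approximants as vector states of $\Lambda_{\tilde\lambda}$ before passing to the limit. The only cosmetic difference is your use of arbitrary states of $A$ in place of the paper's vector states, which changes nothing of substance.
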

\begin{proof}
At first we will prove the statement for any positive $f\in \mathbb C G$. Embed $A$ into $\mathcal B(H)$. Since $\Phi(f)$ is a positive operator,
 $\|\Phi(f)\| = \sup_{\|\xi\|\le 1} (\Phi(f)\xi, \xi)$. Hence it would be sufficient to prove that  $|(\Phi(f)\xi, \xi)|\le \|\Lambda_{\tilde\lambda}(f)\|$, for any $\xi \in H$. So let us fix $\xi$. Define a positive-definite function $\phi$ on $G$ by
 $$\phi(g) = (\Phi(g)\xi, \xi).$$ Since $G/C$ is amenable, there are finitely supported positive-definite functions $\phi_n$ on $G/C$ which pointwisely converge to $1$. Let $\tilde\phi_n(g) = \phi_n(gC).$ Then
 \begin{equation}\label{converge} (\phi\tilde\phi_n)(g) = \phi(g)\phi_n(gC) \to_{n\to\infty} \phi(g),\end{equation} for each $g\in G$.  Since the set of all positive-definite functions which take value 1 at the unit  is closed under pointwise multiplication  (\cite{Davidson}, Lemma VII.2.6) and since for each $n$ the function $\tilde\phi_n$ is supported on the union of finitely many cosets, we conclude that for each $n$ the function $\phi\tilde\phi_n$ is positive-definite and supported on the union of finitely many cosets. By Choi's theorem \cite{Choi} the multiplicative domain $\it M_{\Phi}$ coincides with the set $$S = \{a\in C^*(G)\;|\; \Phi(a^*a) = \Phi(a)^*\Phi(a), \Phi(aa^*) = \Phi(a)\Phi(a)^*\}.$$ It follows from the assumptions that $C^*(C) \subseteq S$ and hence $\Phi(cg) = \lambda(c)\Phi(g)$, for any $c\in C$, $g\in G$. Therefore
 $$(\phi\tilde\phi_n)(cg) = (\Phi(cg)\xi, \xi) \phi_n(cgC) =
 \lambda(c)(\Phi(g)\xi, \xi) \phi(gC) =\lambda(c)(\phi\tilde\phi_n)(g),$$ for any $c\in C$, $g\in G$. By Lemma \ref{MatrixElement} for each $n$ there is a unit vector $\eta_n\in H_{\tilde\lambda}$ such that $$(\phi\tilde\phi_n)(g) = \langle \Lambda_{\tilde\lambda}(g) \eta_n, \eta_n \rangle, $$ for any $g\in G$. Therefore
 \begin{multline*}|(\Phi(f)\xi, \xi)| = | \sum_{g\in supp(f)} f(g)(\Phi(g)\xi, \xi)| =
| \sum_{g\in supp(f)} f(g)\phi(g)|\\ = |\lim_{n\to \infty} \sum_{g\in supp(f)} f(g)(\phi\tilde\phi_n)(g)| = |\lim_{n\to \infty}\sum_{g\in supp(f)} f(g) \langle \Lambda_{\tilde\lambda}(g) \eta_n, \eta_n \rangle| \\= |\lim_{n\to \infty}  \langle \Lambda_{\tilde\lambda}(f) \eta_n, \eta_n \rangle| \le \|\Lambda_{\tilde\lambda}(f)\|.\end{multline*}
Thus the statement is proved for any positive $f\in \mathbb C G$.  Using Schwarz inequality for u.c.p. maps we obtain for  arbitrary $f\in \mathbb C G$
$$\|\Phi(f)\|^2  = \|\Phi(f)^*\Phi(f)\| \le \|\Phi(f^*f)\| \le \Lambda_{\tilde\lambda}(f^*f)\| = \|\Lambda_{\tilde\lambda}(f)\|^2.$$
As $\Phi$ is continuous, by approximating  elements of $C^*(G)$ by elements of $\mathbb C G$ it is straightforward to obtain that $\|\Phi(f)\| \le \| \Lambda_{\tilde\lambda}(f)\|,$ for any $f\in C^*(G)$.
\end{proof}

\begin{lemma}\label{scalar} Let $B\subseteq A$ be a $C^*$-subalgebra and let $\tau$ be a state on $A$ such that $\tau\;|_B$ is $\ast$-multiplicative. Assume that either 1) $B$ is central or 2) $\tau$ is a trace.  Then $\Lambda_{\tau}(b) = \tau(b) 1$, for each $b\in B$.
\end{lemma}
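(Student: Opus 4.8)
The plan is to use the GNS construction directly and show that the vector $e + N_\tau$ (the canonical cyclic vector, writing $A/N_\tau$ with its inner product) behaves like a joint eigenvector for $\Lambda_\tau(B)$. First I would recall that, since $\tau|_B$ is $\ast$-multiplicative, $B$ lies in the multiplicative domain $M_\tau$ of $\tau$ viewed as a u.c.p.\ map into $\mathbb{C}$; equivalently, by Choi's theorem, $\tau(b^*b) = |\tau(b)|^2$ and $\tau(bb^*) = |\tau(b)|^2$ for all $b \in B$, and moreover $\tau(ba) = \tau(b)\tau(a)$ and $\tau(ab) = \tau(a)\tau(b)$ for all $a \in A$, $b \in B$. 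The key computation is then to estimate $\|\Lambda_\tau(b)(a + N_\tau) - \tau(b)(a + N_\tau)\|^2$ for $b \in B$, $a \in A$, i.e. $\tau\bigl((ba - \tau(b)a)^*(ba - \tau(b)a)\bigr)$, and show it vanishes.

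Expanding this expression gives $\tau(a^*b^*ba) - \overline{\tau(b)}\,\tau(a^*b^*a) - \tau(b)\,\tau(a^*ba) + |\tau(b)|^2 \tau(a^*a)$. In case~1, $B$ central, we have $b^*ba = ab^*b$, $b^*a = ab^*$, $ba = ab$ inside $A$, so using multiplicativity of $\tau$ on $B$ this becomes $\tau(a^*a b^*b) - \overline{\tau(b)}\tau(a^*ab^*) - \tau(b)\tau(a^*ab) + |\tau(b)|^2\tau(a^*a)$; now applying $\tau(xb') = \tau(x)\tau(b')$ for $b' \in B$ (the multiplicative domain property) to each of the first three terms with $x = a^*a$ yields $\tau(a^*a)\bigl(\tau(b^*b) - \overline{\tau(b)}\tau(b^*) - \tau(b)\tau(b) \text{-ish terms}\bigr)$, and since $\tau(b^*b) = \overline{\tau(b)}\tau(b) = |\tau(b)|^2$ everything cancels, giving $0$. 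In case~2, $\tau$ a trace, I would not move $b$ past $a$ inside $A$ but instead use traciality: $\tau(a^*b^*ba) = \tau(baa^*b^*)$, $\tau(a^*b^*a) = \tau(b^*aa^*)$, $\tau(a^*ba) = \tau(aa^*b) = \tau(baa^*)$; writing $c = aa^*$ and using the multiplicative domain identities $\tau(bc b^*) $... more carefully, $\tau(baa^*b^*) = \tau(b \cdot aa^* \cdot b^*)$, and since $b, b^* \in M_\tau$ this equals $\tau(b)\tau(aa^*)\tau(b^*) = \tau(b)\overline{\tau(b)}\tau(aa^*) = |\tau(b)|^2\tau(aa^*)$; similarly $\tau(b^*aa^*) = \overline{\tau(b)}\tau(aa^*)$ and $\tau(baa^*) = \tau(b)\tau(aa^*)$, and the four terms again collapse to $|\tau(b)|^2\tau(aa^*) - |\tau(b)|^2\tau(aa^*) - |\tau(b)|^2\tau(aa^*) + |\tau(b)|^2\tau(a^*a) = 0$ using $\tau(aa^*) = \tau(a^*a)$.

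Having shown $\Lambda_\tau(b)(a + N_\tau) = \tau(b)(a + N_\tau)$ on the dense subspace $A/N_\tau$, it follows by continuity that $\Lambda_\tau(b) = \tau(b) 1$ on all of $H_\tau$, which is the claim. The main obstacle — really the only subtlety — is organizing the algebra in case~2 so that traciality is invoked at the right spots and the multiplicative domain is used only on genuinely multiplicative-domain elements; once the bookkeeping with $c = aa^*$ is set up as above, the cancellation is automatic. I should also note that $\tau$ multiplicative on $B$ forces $\tau|_B$ to be a $\ast$-homomorphism to $\mathbb{C}$, so $\tau(b^*) = \overline{\tau(b)}$, which is used implicitly throughout.
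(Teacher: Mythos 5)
Your proof is correct, but it runs on a somewhat heavier engine than the paper's. Both arguments reduce the lemma to showing $ba-\tau(b)a\in N_{\tau}$ for all $a\in A$, $b\in B$, and both split according to the two hypotheses; the paper, however, never needs the multiplicative domain theorem: it factors $ba-\tau(b)a=(b-\tau(b)1)a$, uses traciality (or centrality) to pass to $\tau\bigl((b-\tau(b)1)aa^*(b-\tau(b)1)^*\bigr)$, bounds this by $\|a\|^2\,\tau\bigl((b-\tau(b)1)(b-\tau(b)1)^*\bigr)$ via the operator inequality $aa^*\le\|a\|^2 1$ and positivity of $\tau$, and the last quantity vanishes using only multiplicativity of $\tau|_B$ on $B$ itself. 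You instead expand the norm squared completely and invoke Choi's theorem to conclude $B\subseteq M_\tau$, so that $\tau(ab)=\tau(a)\tau(b)$ for all $a\in A$, $b\in B$; that input is correct (and could even be obtained more cheaply from Cauchy--Schwarz), and your use of centrality resp.\ traciality to move the middle factor $b^*b$ to the edge is exactly the right remedy for the fact that the multiplicative domain property alone says nothing about $\tau(a^*b^*ba)$. So the skeleton is the same, but the paper's cancellation is a one-line estimate while yours is a term-by-term expansion relying on a stronger (though standard) fact. One bookkeeping slip: the conjugates on your cross terms are swapped; the correct expansion is $\tau(a^*b^*ba)-\tau(b)\tau(a^*b^*a)-\overline{\tau(b)}\tau(a^*ba)+|\tau(b)|^2\tau(a^*a)$. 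Taken literally, your coefficients would leave a residue of $4(\operatorname{Im}\tau(b))^2\,\tau(aa^*)$ in case 2, but the slip is harmless: with the correct coefficients, your evaluations of the three terms make each cross term equal to $-|\tau(b)|^2\tau(a^*a)$ (case 1) or $-|\tau(b)|^2\tau(aa^*)$ (case 2), and the cancellation you assert goes through, after which density of $A/N_\tau$ in $H_\tau$ finishes the proof as you say.
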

\begin{proof} As $\Lambda_{\tau}(b) (a+ N_{\tau}) = ab + N_{\tau}$, we need to check that $ba- \tau(b)a \in N_{\tau}$, for each $a\in A$. We have
\begin{multline*} \tau \left((ba- \tau(b)a)^*(ba- \tau(b)a)\right) = \tau \left((ba- \tau(b)a)(ba- \tau(b)a)^*\right) \\ = \tau\left((b-\tau(b)1)aa^*(b-\tau(b)1)^*\right) \le \|a\|^2
\tau\left((b-\tau(b)1)(b-\tau(b)1)^*\right)=0
\end{multline*} which means that $ba- \tau(b)a \in N_{\tau}$, for each $a\in A$.
\end{proof}

\begin{corollary}\label{kernel} Let $G$ be a discrete group, let $C$ be its normal subgroup such that $G/C$ is amenable, and let $\lambda$ be a 1-dimensional representation of $C$. Then for any unital $C^*$-algebra $A$ and any u.c.p. $\Phi: C^*(G)\to A$ such that $\Phi(c)  = \lambda(c) 1$, for any $c\in C$,   the following holds:

(i) $Ker \Lambda_{\tilde\lambda} \subseteq  Ker \Phi$.

\noindent If $C$ is central, then the following also holds:

(ii) $\overline{\mathbb C G}^{\|\|_{\lambda}} = \Lambda_{\tilde\lambda}(C^*(G))$.

\end{corollary}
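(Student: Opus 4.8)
The plan is to derive both statements as essentially immediate consequences of Theorem \ref{factorization} and Lemma \ref{scalar}. For part (i): given any u.c.p. map $\Phi: C^*(G) \to A$ with $\Phi(c) = \lambda(c)1$ for all $c \in C$, Theorem \ref{factorization} tells us directly that $\|\Phi(f)\| \le \|\Lambda_{\tilde\lambda}(f)\|$ for every $f \in C^*(G)$. Hence if $f \in \mathrm{Ker}\,\Lambda_{\tilde\lambda}$, that is $\Lambda_{\tilde\lambda}(f) = 0$, then $\|\Phi(f)\| \le 0$, so $\Phi(f) = 0$; thus $\mathrm{Ker}\,\Lambda_{\tilde\lambda} \subseteq \mathrm{Ker}\,\Phi$. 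This is the whole argument for (i) — no new work is needed beyond invoking the theorem.

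For part (ii), first I would observe that the GNS representation $\Lambda_{\tilde\lambda}$ associated to the state $\tilde\lambda$ is itself one of the maps $\Phi$ covered by Theorem \ref{factorization}: indeed $\tilde\lambda$ restricted to $C^*(C)$ is $\ast$-multiplicative (it is the character coming from the one-dimensional representation $\lambda$), so by Lemma \ref{scalar} — using that $C$ is central, which is where hypothesis (1) of that lemma is invoked — we get $\Lambda_{\tilde\lambda}(c) = \tilde\lambda(c)1 = \lambda(c)1$ for every $c \in C$. Therefore $\Lambda_{\tilde\lambda}$ satisfies the hypotheses placed on $\Phi$, and $\Lambda_{\tilde\lambda}(C^*(G))$ is a quotient of $C^*(G)$ by $\mathrm{Ker}\,\Lambda_{\tilde\lambda}$.

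Next I would identify the completion $\overline{\mathbb{C}G}^{\|\cdot\|_\lambda}$ with the quotient of $C^*(G)$ by the common kernel of all representations $\pi$ with $\pi|_C = \lambda 1$. Each such $\pi$ is a (u.c.p., indeed $\ast$-homomorphic) map satisfying the hypotheses of Theorem \ref{factorization}, so by part (i), $\mathrm{Ker}\,\Lambda_{\tilde\lambda} \subseteq \mathrm{Ker}\,\pi$ for every such $\pi$; conversely $\Lambda_{\tilde\lambda}$ is itself such a $\pi$ by the previous paragraph, so the intersection of all these kernels is exactly $\mathrm{Ker}\,\Lambda_{\tilde\lambda}$. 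Consequently $\|f\|_\lambda = \sup_\pi \|\pi(f)\| = \|\Lambda_{\tilde\lambda}(f)\|$ for every $f \in \mathbb{C}G$ (the inequality $\le$ from Theorem \ref{factorization}, the inequality $\ge$ because $\Lambda_{\tilde\lambda}$ is one of the $\pi$'s), and completing both sides gives the isometric identification $\overline{\mathbb{C}G}^{\|\cdot\|_\lambda} = \Lambda_{\tilde\lambda}(C^*(G))$.

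I do not expect a genuine obstacle here, since all the technical content has been front-loaded into Theorem \ref{factorization} and Lemma \ref{scalar}; the only point requiring a little care is the bookkeeping that $\Lambda_{\tilde\lambda}$ really does belong to the class of maps to which Theorem \ref{factorization} applies (so that the bound from that theorem is attained and is not merely an upper bound), which is precisely what Lemma \ref{scalar} provides via the centrality of $C$. One should also note that the class of such $\pi$ is nonempty and that $\|\cdot\|_\lambda$ is genuinely a C*-norm (it dominates no larger seminorm than the universal one and is nondegenerate because, e.g., $\Lambda_{\tilde\lambda}$ is faithful on it by construction), but these are routine.
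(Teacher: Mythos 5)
Your proposal is correct and follows essentially the same route as the paper: part (i) is read off directly from the norm inequality in Theorem \ref{factorization}, and part (ii) combines that inequality with Lemma \ref{scalar} (centrality of $C$ giving $\Lambda_{\tilde\lambda}|_{C}=\lambda 1$, so that $\Lambda_{\tilde\lambda}$ is among the representations defining $\|\cdot\|_{\lambda}$ and the two norms coincide on $\mathbb{C}G$). The extra bookkeeping you add about the completion is routine and consistent with the paper's argument.
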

\begin{proof} (i) follows from Theorem \ref{factorization}.

\noindent (ii): The inclusion $\overline{\mathbb C G}^{\|\|_{\lambda}} \subseteq \Lambda_{\tilde\lambda}(C^*(G))$ follows from Theorem \ref{factorization} (even if $C$ is not central).
In the case when $C$ is central, by Lemma \ref{scalar} applied to $A = C^*(G)$ and $B = C^*(C)$, we obtain that  $\Lambda_{\tilde\lambda}|_C = \lambda 1$ and hence the inclusion becomes an equality.
\end{proof}

Although the results above fail without the amenability assumption, one can prove an algebraic analogue of the statement (i) of Corollary \ref{kernel}. We write it below for completeness.

\begin{proposition}\label{algebra} Let $G$ be a discrete group, $C$  its subgroup, $ \lambda$ a 1-dimensional representation of $C$ and $\rho$ a $\ast$-representation of $C^*(G)$ such that $\rho\;|_C = \lambda 1$.  Then $Ker \Lambda_{\tilde\lambda} \bigcap \mathbb C G \subseteq Ker \rho \bigcap \mathbb C G$.
\end{proposition}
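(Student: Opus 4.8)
The plan is to mimic the proof of Theorem \ref{factorization}, but replace the analytic ingredient (amenability of $G/C$, which produced finitely supported positive-definite functions $\phi_n$ on $G/C$ converging to $1$) with a purely algebraic truncation. The point is that we only need to control elements of $\mathbb C G$, which are finitely supported, so no limiting procedure is required.

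First I would fix $f\in Ker\,\Lambda_{\tilde\lambda}\cap\mathbb C G$ and a vector $\xi$ in the Hilbert space on which $\rho$ acts; embedding $\rho(C^*(G))\subseteq \mathcal B(H)$, I set $\phi(g)=\langle\rho(g)\xi,\xi\rangle$, a positive-definite function on $G$. Since $\rho|_C=\lambda 1$ and $C^*(C)$ lies in the multiplicative domain of the (representation, hence u.c.p.) map $\rho$, we get $\phi(cg)=\lambda(c)\phi(g)$ for $c\in C$, $g\in G$. Now, instead of multiplying by $\tilde\phi_n$, I would directly restrict attention to the finite set $S=\mathrm{supp}(f)$: let $T$ be the (finite) set of cosets of $C$ meeting $S$, pick representatives $g_1,\dots,g_r$ of those cosets, and define $\psi$ on $G$ by $\psi(g)=\lambda(c)\phi(g_i)$ if $g=cg_i$ for some $c\in C$ and some $i\le r$, and $\psi(g)=0$ otherwise. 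By construction $\psi$ agrees with $\phi$ on $S$ (indeed on $\bigcup_i Cg_i$), is supported on finitely many cosets of $C$, and satisfies $\psi(cg)=\lambda(c)\psi(g)$.

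The key step is to verify that $\psi$ is positive-definite. Here I would use the standard fact that $\psi$ is positive-definite iff for every finite $F\subset G$ the matrix $[\psi(s^{-1}t)]_{s,t\in F}$ is positive; writing each element of $F$ in the form $cg_i$, this matrix decomposes as a direct-sum-over-cosets / tensor-with-$\lambda$ type expression built from the principal submatrix $[\phi(g_j^{-1}g_i)]$ of the positive-definite $\phi$, exactly as in the computation inside Lemma \ref{MatrixElement} and Proposition (the one preceding Lemma \ref{basis}) showing $\tilde\lambda$ is positive-definite. Once positivity of $\psi$ is in hand, Lemma \ref{MatrixElement} applies and yields a unit vector $\eta\in H_{\tilde\lambda}$ with $\psi(g)=\langle\Lambda_{\tilde\lambda}(g)\eta,\eta\rangle$ for all $g\in G$. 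Then
\[
\langle\rho(f)\xi,\xi\rangle=\sum_{g\in S}f(g)\phi(g)=\sum_{g\in S}f(g)\psi(g)=\langle\Lambda_{\tilde\lambda}(f)\eta,\eta\rangle=0,
\]
the last equality because $f\in Ker\,\Lambda_{\tilde\lambda}$. Since $\xi$ was arbitrary, $\langle\rho(f)\xi,\xi\rangle=0$ for all $\xi$, and replacing $f$ by $fh$ for arbitrary $h\in\mathbb C G$ (still finitely supported, so the same argument applies) gives $\langle\rho(f)\rho(h)\xi,\xi\rangle=0$, whence $\rho(f)=0$ on the (dense) range of $\rho(\mathbb C G)\xi$... more cleanly: apply the displayed identity with $\xi$ replaced by $\rho(h)\xi$ and $f$ unchanged — note $\phi'(g)=\langle\rho(g)\rho(h)\xi,\xi\rangle$ is still positive-definite with the same covariance property — to conclude $\langle\rho(f)\eta',\eta'\rangle$-type vanishing, and hence $\rho(f)=0$. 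So $f\in Ker\,\rho$, as desired.

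I expect the main obstacle to be the bookkeeping in the positive-definiteness check for $\psi$: one must be careful that the ``extension by zero'' of the finite coset-data is genuinely positive-definite globally, not merely on the cosets involved, but this is precisely the content already extracted in the proof that $\tilde\lambda$ is a character and in the positivity computation for $T_\psi$ in Lemma \ref{MatrixElement}, so it should go through verbatim. A minor secondary point is to make sure the reduction to showing $\langle\rho(f)\xi,\xi\rangle=0$ for all $\xi$ actually forces $\rho(f)=0$ rather than merely $\rho(f)$ self-adjoint-with-zero-diagonal; the standard fix, as indicated above, is to run the argument with $\xi$ ranging over a total set of the form $\rho(h)\xi_0$, or equivalently to apply the inequality $\|\rho(f)\|^2=\|\rho(f^*f)\|$ together with $f^*f\in\mathbb C G$ and the scalar case, exactly as at the end of the proof of Theorem \ref{factorization}.
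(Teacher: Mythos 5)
Your reduction to the scalar identity $\langle\rho(f)\xi,\xi\rangle=\sum_{g}f(g)\phi(g)$ is fine (and the closing worry is unnecessary: on a complex Hilbert space $\langle T\xi,\xi\rangle=0$ for all $\xi$ already forces $T=0$ by polarization), but the step you yourself flag as the main obstacle is a genuine gap: the truncated function $\psi$ is \emph{not} positive definite in general. Truncating a positive-definite function to a finite union of cosets of $C$, even with the $\lambda$-covariance built in, destroys positivity. Already in the simplest admissible case $C=\{e\}$, $G=\mathbb Z$, $\phi\equiv 1$, truncation to a finite set fails: the symmetrized cut-off $\psi=1$ on $\{-1,0,1\}$ and $0$ elsewhere gives the matrix $[\psi(t-s)]_{s,t\in\{0,1,2\}}=\left(\begin{smallmatrix}1&1&0\\1&1&1\\0&1&1\end{smallmatrix}\right)$, which has determinant $-1$ (and the unsymmetrized version is not even Hermitian). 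The computations you cite do not rescue this: the proof that $\tilde\lambda$ is positive definite, and the positivity of $T_\psi$ in Lemma \ref{MatrixElement}, exploit that $s^{-1}t\in C$ exactly when $s,t$ lie in the same coset, which yields a clean block decomposition when the support is the \emph{single} coset $C$; for a union of several cosets $Cg_1,\dots,Cg_r$ the cross terms $s^{-1}t$ with $s\in Cg_i$, $t\in Cg_j$, $i\neq j$, land in cosets outside the chosen family, so $[\psi(s^{-1}t)]$ is a masked version of $[\phi(s^{-1}t)]$, and masking entries of a positive matrix is not a positivity-preserving operation. This is precisely why Theorem \ref{factorization} does not truncate but multiplies $\phi$ by $\tilde\phi_n$, the pullback of a positive-definite function on the quotient (hence needs $C$ normal and $G/C$ amenable, and uses the Schur product theorem); that ingredient cannot simply be replaced by a sharp cut-off.

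The paper's actual proof of Proposition \ref{algebra} avoids positive-definite extensions altogether and is much more direct: writing $f=\sum t_g g\in\ker\Lambda_{\tilde\lambda}\cap\mathbb CG$, the GNS construction gives $\tilde\lambda(f^*f)=0$, i.e.\ $\sum_{g^{-1}h\in C}\bar t_g t_h\lambda(g^{-1}h)=0$; applying $\rho$ and grouping the support of $f$ by cosets $g_iC$, the operators $L_i=\sum_{g\in g_iC}t_g\rho(g)$ satisfy $\sum_i L_i^*L_i=\sum_{g^{-1}h\in C}\bar t_g t_h\rho(g^{-1}h)=0$, hence every $L_i=0$ and $\rho(f)=\sum_i L_i=0$. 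If you want to keep your Hilbert-space viewpoint, you would need a substitute for the truncation step (for instance an argument producing a vector $\eta$ with $\sum_g f(g)\phi(g)=\langle\Lambda_{\tilde\lambda}(f)\eta,\eta\rangle$ that does not pass through positive definiteness of a cut-off), but as written the proposal does not prove the statement.
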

\begin{proof} Suppose that $\sum t_g g\in Ker \Lambda_{\tilde\lambda} \bigcap \mathbb C G.$ It follows from the GNS-construction that
$\tilde \lambda((\sum t_gg)^*(\sum t_gg)) = 0$. Therefore we obtain
$$0 = \tilde \lambda((\sum t_gg)^*(\sum t_gg)) = \sum \bar{t_g}t_h \tilde\lambda(g^{-1}h) = \sum_{g^{-1}h\in C}\bar{t_g}t_h\lambda(g^{-1}h).$$
Hence \begin{equation}\label{1}\sum_{g^{-1}h\in C}\bar{t_g}t_h\rho(g^{-1}h) = \sum_{g^{-1}h\in C}\bar{t_g}t_h\lambda(g^{-1}h) 1 =0.\end{equation}
Let $g_1C, g_2C, \ldots$ be the cosets for $C$. Let $$L_i = \sum_{g\in g_iC} t_g\rho(g).$$
Since
$g^{-1}h\in C$ if and only if $g, h$ belong to the same coset, using (\ref{1}) we obtain
$$\sum_i L_i^*L_i = \sum_i \sum_{g, h\in g_iC} \bar{t}_gt_h\rho(g^{-1}h) = \sum_{g^{-1}h\in C} \bar{t}_gt_h\rho(g^{-1}h) = 0.$$
Hence $L_i=0$ for each i. Since $G = g_1C\sqcup g_2C \sqcup \ldots$ we have $$\rho(\sum t_gg) = \rho(\sum_i\sum_{g\in g_iC} t_gg) = \sum_i L_i =0.$$
\end{proof}

\section{When central amalgamated free products are RFD}

The following statement is extracted from [\cite{ExelLoring}, proof of $(a)\Rightarrow (b)$ in Th. 2.4].

\begin{lemma}\label{ExelLoring} (Exel-Loring \cite{ExelLoring})
Let $f_n$, $n\in \mathbb N$,  and  $f$ be states on a C*-algebra $A$ such that  $f_n\to f$ $\ast$-weakly. Then there exist coisometries $V_n: H_f\to H_{f_n}$ such that for any $a\in A$, $\Lambda_f(a)$ is a pointwise SOT-limit of the (degenerate) representations $V_n^*\Lambda_{f_n}(a) V_n$:
$$\Lambda_f(a) = SOT-\lim V_n^*\Lambda_{f_n}(a) V_n.$$
\end{lemma}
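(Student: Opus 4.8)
The plan is to reconstruct the relevant portion of the Exel--Loring argument. The starting point is the GNS construction applied simultaneously to the states $f_n$ and $f$. First I would form the direct sum state $g = \sum_{n} 2^{-n} f_n$ on $A$ (or work with $\bigoplus_n f_n$ on a weighted $\ell^2$-type direct sum); the point is that each $\Lambda_{f_n}$ is a subrepresentation of a single representation $\Lambda_g$ on $H_g$, and the vector implementing $f_n$ inside $H_g$ is a scalar multiple of the canonical cyclic vector of the $n$-th summand. In this common space, $\ast$-weak convergence $f_n \to f$ translates into convergence of the corresponding vector states, and the natural move is to build a representation of $A$ on $H_f$ out of an ultralimit / SOT-limit of the $\Lambda_{f_n}$'s. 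Since every $f_n$ is a state (hence the cyclic vectors are unit vectors), the relevant intertwiners will automatically be contractions.

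The key construction is the following. Let $\xi_f \in H_f$ and $\xi_{f_n} \in H_{f_n}$ denote the canonical cyclic unit vectors, so $f(a) = \langle \Lambda_f(a)\xi_f,\xi_f\rangle$ and likewise for $f_n$. I would try to define $V_n^* : H_{f_n} \to H_f$ first on the dense subspace $\Lambda_{f_n}(A)\xi_{f_n}$ by $V_n^*\big(\Lambda_{f_n}(a)\xi_{f_n}\big) = \Lambda_f(a)\xi_f$. This is \emph{not} well-defined or bounded for fixed $n$ in general, so the honest route is: for each finite subset $\mathcal F \subset A$ and $\varepsilon > 0$, the asymptotic agreement $f_n(a^*b) \to f(a^*b)$ for $a,b \in \mathcal F$ forces the Gram matrices of $\{\Lambda_{f_n}(a)\xi_{f_n} : a \in \mathcal F\}$ to converge to that of $\{\Lambda_f(a)\xi_f : a\in\mathcal F\}$; hence for $n$ large one can find a partial isometry (extendable to a coisometry $V_n : H_f \to H_{f_n}$, possibly after enlarging $H_{f_n}$ harmlessly, or directly since $\dim H_f$ may be compared appropriately) carrying $\Lambda_f(a)\xi_f$ to within $\varepsilon$ of $\Lambda_{f_n}(a)\xi_{f_n}$ for each $a \in \mathcal F$. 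Running this over an exhausting sequence of pairs $(\mathcal F_k,\varepsilon_k)$ with $\varepsilon_k \to 0$ and using the universality of $C^*(G)$-style approximation (really just density of $\Lambda_{f_n}(A)\xi_{f_n}$), I would extract the desired sequence $V_n$ with $V_n V_n^* = 1_{H_{f_n}}$ and $\|V_n^*\Lambda_{f_n}(a)V_n\,\xi - \Lambda_f(a)\xi\| \to 0$ for $a$ in a dense set and $\xi$ in a dense set; a standard $3\varepsilon$-argument with the uniform bound $\|V_n^*\Lambda_{f_n}(a)V_n\| \le \|a\|$ upgrades this to the SOT convergence $\Lambda_f(a) = SOT\text{-}\lim V_n^*\Lambda_{f_n}(a)V_n$ for \emph{every} $a \in A$ and \emph{every} $\xi \in H_f$.

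The main obstacle, and the step that needs the most care, is the coisometry requirement: one wants $V_n : H_f \to H_{f_n}$ with $V_nV_n^* = 1$, i.e. $V_n^*$ an isometry onto a subspace of $H_f$, but the approximation above only produces a map that is \emph{approximately} isometric on the relevant finite-dimensional pieces. The fix is to observe that an approximate isometry between subspaces can be corrected to an exact one by polar decomposition (on the finite-dimensional span of the vectors in play) at the cost of an error tending to $0$, and that the ambient spaces $H_f$, $H_{f_n}$ are separable, so a suitable partial isometry defined on a finite-dimensional subspace of $H_f$ extends to an isometry $V_n^*$ of all of $H_f$ into $H_{f_n}$ provided $\dim H_{f_n} \ge \dim H_f$ — and if not, one first replaces $f_n$ by $f_n$ acting on an inflated space (or, following Exel--Loring, one simply takes the direct sum representation which is large enough). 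Once exact coisometries are in hand, the SOT-limit identity is formal. I would also remark that this is precisely the ingredient needed later to feed Voiculescu's theorem, since $V_n^*\Lambda_{f_n}(\cdot)V_n$ being a genuine representation on the subspace $V_n^*(H_{f_n}) \subseteq H_f$ (extended by $0$) is what makes $\Lambda_f$ a pointwise SOT-limit of honest finite-dimensional-range-controlled representations.
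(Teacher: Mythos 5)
The paper offers no proof of this lemma at all: it is quoted as extracted from the proof of $(a)\Rightarrow(b)$ of Theorem 2.4 in \cite{ExelLoring}, and your reconstruction follows essentially that route. Its analytic core is sound: weak-$\ast$ convergence gives entrywise convergence of the Gram matrices of $\{\Lambda_{f_n}(a)\xi_{f_n}\}_{a\in\mathcal F}$ to those of $\{\Lambda_f(a)\xi_f\}_{a\in\mathcal F}$; via square roots of the Gram matrices (your polar-decomposition correction) one gets finite-rank partial isometries $u_n$ with $u_n\Lambda_f(a)\xi_f\approx\Lambda_{f_n}(a)\xi_{f_n}$ for $a\in\mathcal F$; one extends these to the $V_n$, runs an exhaustion over $(\mathcal F_k,\varepsilon_k)$ (separability of $A$, implicit here as in the paper's applications, is what makes this possible), and upgrades by the uniform bound $\|V_n^*\Lambda_{f_n}(a)V_n\|\le\|a\|$. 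Two routine points you leave implicit but which do work: the finite sets should be taken closed under products, since estimating $V_n^*\Lambda_{f_n}(a)V_n\Lambda_f(b)\xi_f$ requires control of $V_n$ on $\Lambda_f(ab)\xi_f$; and the adjoint estimate $V_n^*\Lambda_{f_n}(c)\xi_{f_n}\approx\Lambda_f(c)\xi_f$ is then automatic from $\|V_n\|\le 1$ together with $f_n(c^*c)\to f(c^*c)$.

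The step you yourself single out as the delicate one, however, is resolved incorrectly. With the stated convention, $V_n\colon H_f\to H_{f_n}$ and $V_nV_n^*=1_{H_{f_n}}$, so $V_n^*$ is an isometry of $H_{f_n}$ \emph{into} $H_f$, not a map ``of all of $H_f$ into $H_{f_n}$'' as you write; and extending the finite-rank partial isometry $u_n$ (initial space $E_n\subset H_f$, final space $F_n\subset H_{f_n}$) to a coisometry means mapping a subspace of $H_f\ominus E_n$ isometrically onto $H_{f_n}\ominus F_n$, which requires $\dim H_f\ge\dim H_{f_n}$ --- exactly the reverse of the inequality $\dim H_{f_n}\ge\dim H_f$ you impose. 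Consequently your remedy of ``inflating $H_{f_n}$'' points the wrong way: in the only problematic regime, $\dim H_{f_n}>\dim H_f$, no coisometry $H_f\to H_{f_n}$ exists at all (take $A=C[0,1]$, $f=\delta_0$ and $f_n$ normalized Lebesgue measure on $[0,1/n]$: then $H_f=\mathbb C$ while $H_{f_n}$ is infinite-dimensional), so enlarging $H_{f_n}$ cannot help and in fact only makes the required inequality worse. The correct reading is that the lemma carries the implicit hypothesis $\dim H_f\ge\dim H_{f_n}$, which is automatic in the situations where it is invoked in this paper ($H_{f_n}$ is the finite-dimensional GNS space of a matricial trace while $H_f$ is infinite-dimensional), and under that hypothesis the extension of $u_n$ to a coisometry is immediate. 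Fix the direction of this dimension bookkeeping and state that standing assumption, and your argument becomes a correct rendering of the cited Exel--Loring step.
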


The following proposition and corollary are due to Don Hadwin. They seem to never have been published (a somewhat close statement was published in [\cite{Don}, Th. 4.3]. For this reason a brief proof of the proposition was included in K. Courtney and the author's paper \cite{KristinTanya}.

\begin{proposition}[Hadwin] Let $\{e_n\}$ be an orthonormal basis in an infinite-dimensional separable Hilbert space $H$, let $A, B, C, D \in \mathcal B(H)$ and  $T = \left(\begin{array}{cc} A&B\\C&D\end{array}\right)$. Then for any unitary $w_n: H \to H\oplus H$ such that $w_ne_k = (e_k, 0)$, $1\le k \le n$,
$w_n^*Tw_n$ converge to $A$ in the weak operator topology.
Moreover we have convergence in the strong operator topology if and only if $C=0$ and in the $\ast$-strong operator topology if and only if $C=B=0$. 
\end{proposition}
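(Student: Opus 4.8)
The plan is to unwind the definitions directly and work with matrix coefficients against the basis vectors $e_k$. Write $w_n^*Tw_n$ as an operator on $H$, and for fixed $i,j$ compute the matrix entry $\langle w_n^*Tw_n e_i, e_j\rangle = \langle T w_n e_i, w_n e_j\rangle$. Since $w_n e_k = (e_k,0) \in H\oplus H$ for $k\le n$, once $n\ge \max(i,j)$ this equals $\langle T(e_i,0),(e_j,0)\rangle = \langle A e_i, e_j\rangle$. So for each fixed pair $(i,j)$ the matrix entries of $w_n^*Tw_n$ are \emph{eventually constant} and equal to those of $A$. Combined with the uniform bound $\|w_n^*Tw_n\| = \|T\|$, this gives weak operator convergence to $A$ on the dense set of finite linear combinations of the $e_k$, and an $\varepsilon/3$ argument extends it to all of $H$: given $\xi,\eta$ and $\varepsilon>0$, approximate by finite combinations $\xi',\eta'$, use the eventual-constancy on $\xi',\eta'$, and control the error terms by $\|T\|$.

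For the strong operator topology, I would test convergence of $w_n^*Tw_n e_i$ to $A e_i$ in norm for each basis vector $e_i$ (then extend by density and uniform boundedness as above). Here one must compute $\|w_n^*Tw_n e_i\|^2 = \|Tw_n e_i\|^2 = \|T(e_i,0)\|^2 = \|A e_i\|^2 + \|C e_i\|^2$ for $n\ge i$, whereas $\|A e_i\|^2$ is the target; since we already have weak convergence $w_n^*Tw_n e_i \to Ae_i$, norm convergence holds iff $\|w_n^*Tw_n e_i\| \to \|Ae_i\|$, i.e. iff $\|Ce_i\|=0$ for every $i$, i.e. iff $C=0$. For the $\ast$-strong topology we additionally require $(w_n^*Tw_n)^* e_i \to A^* e_i$ in norm; running the same computation with $T^*=\left(\begin{smallmatrix}A^*&C^*\\B^*&D^*\end{smallmatrix}\right)$ in place of $T$ shows this needs $\|B^* e_i\|=0$ for all $i$, i.e. $B=0$. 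So the $\ast$-strong case holds iff $C=B=0$, as claimed.

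I do not anticipate a serious obstacle here; the statement is essentially a bookkeeping exercise once one observes that the conjugation by $w_n$ simply extracts the upper-left corner "in the limit." The one point requiring a little care is the passage from the dense subspace spanned by $\{e_k\}$ to all of $H$ in each of the three topologies: this is where uniform boundedness of $\|w_n^*Tw_n\| = \|T\|$ is used, and it is worth stating explicitly that without such a bound the conclusion could fail. I would also remark that the equivalences are genuine "if and only if" statements: the forward implications (convergence $\Rightarrow$ vanishing of the relevant corner) follow from the norm computations above together with the fact that weak convergence plus convergence of norms implies strong convergence in Hilbert space, while the reverse implications are immediate from those same computations.
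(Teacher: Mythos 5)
Your proof is correct, and since the paper itself does not reproduce Hadwin's argument (it only cites the brief proof included in [CS19]), the natural comparison is with that standard computation, which yours matches: eventual constancy of matrix coefficients for the WOT claim, the norm identity $\|w_n^*Tw_ne_i\|^2=\|Ae_i\|^2+\|Ce_i\|^2$ for $n\ge i$ combined with the fact that weak convergence plus convergence of norms gives norm convergence for the SOT equivalence, and the same applied to $T^*$ for the $\ast$-strong case. You also correctly flag the only delicate points, namely the passage from the basis vectors to all of $H$ via the uniform bound $\|w_n^*Tw_n\|=\|T\|$, so there is no gap.
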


Since the unitaries $w_n$ in the proposition above do not depend on the operators $A, B, C, D$, and since all infinite-dimensional separable Hilbert spaces are isomorphic, we obtain the following corollary.

\begin{corollary} [Hadwin]\label{Don} Let  $\pi$ and $\rho$ be representations of a C*-algebra on infinite-dimensional separable spaces $H$ and $K$ respectively. Then there are unitaries $w_n: H \to H\oplus K$ such that $\pi = SOT-\lim w_n^*(\pi\oplus \rho) w_n.$
\end{corollary}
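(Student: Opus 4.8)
The plan is to deduce the corollary from the preceding proposition by absorbing $K$ into a second copy of $H$. Since $H$ and $K$ are both infinite-dimensional and separable, I would fix a unitary $U\colon K\to H$ and set $V=1_H\oplus U\colon H\oplus K\to H\oplus H$. Then for every $a$ in the C*-algebra one has $V\,(\pi\oplus\rho)(a)\,V^{*}=\pi(a)\oplus\bigl(U\rho(a)U^{*}\bigr)$, so, up to the fixed unitary $V$, the representation $\pi\oplus\rho$ is identified with the representation $a\mapsto T_a:=\pi(a)\oplus\bigl(U\rho(a)U^{*}\bigr)$ on $H\oplus H$. Viewed as a $2\times 2$ operator matrix in the notation of the proposition, $T_a$ is block-diagonal: both its off-diagonal corners are $0$.

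Next I would apply the proposition. Fix an orthonormal basis $\{e_n\}$ of $H$ and let $w_n\colon H\to H\oplus H$ be the unitaries of the proposition, determined by $w_ne_k=(e_k,0)$ for $1\le k\le n$; the key point is that the $w_n$ depend only on this basis and not on the operator they are applied to. Applying the proposition to $T=T_a$, whose lower-left corner vanishes, yields
\[
w_n^{*}T_a w_n\longrightarrow \pi(a)\quad\text{in the strong operator topology, as } n\to\infty,
\]
for every $a$ (in fact $\ast$-strongly, since the upper-right corner of $T_a$ is also $0$). Because the $w_n$ are the same for all $a$, this is genuine pointwise-SOT convergence of representations.

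Finally I would transport back to $H\oplus K$. Put $\widetilde w_n=V^{*}w_n\colon H\to H\oplus K$, which is a unitary, and observe that $\widetilde w_n^{*}\,(\pi\oplus\rho)(a)\,\widetilde w_n=w_n^{*}V\,(\pi\oplus\rho)(a)\,V^{*}w_n=w_n^{*}T_a w_n$, which converges to $\pi(a)$ in SOT for every $a$ by the previous step. Hence $\pi=\mathrm{SOT}\text{-}\lim\widetilde w_n^{*}(\pi\oplus\rho)\widetilde w_n$, as required.

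There is no real obstacle: the substantive content—the strong convergence of the compressions $w_n^{*}Tw_n$ and the fact that the $w_n$ can be chosen uniformly in the operator—is exactly what the proposition (together with the remark preceding the corollary) supplies. The only things to notice are that a direct sum is block-diagonal, so its lower-left corner vanishes and the SOT (not merely WOT) conclusion of the proposition applies, and that all infinite-dimensional separable Hilbert spaces are unitarily isomorphic, which permits replacing $K$ by $H$ at the outset; the corollary is thus a packaging step.
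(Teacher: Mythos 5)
Your proposal is correct and follows essentially the same route as the paper, which deduces the corollary in one sentence from the fact that the unitaries $w_n$ in Hadwin's proposition do not depend on the operator entries and that all infinite-dimensional separable Hilbert spaces are unitarily isomorphic. Your write-up simply makes this explicit: identifying $K$ with $H$ via a fixed unitary, noting that $\pi(a)\oplus\rho(a)$ is block-diagonal so the SOT (indeed $\ast$-SOT) case of the proposition applies uniformly in $a$, and transporting the unitaries back to $H\oplus K$.
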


The other lemmas we need are all very easy.


\begin{lemma}\label{GNSfin-dim} Let $\rho$ be a finite-dimensional representation of $A$ and let $f$ be a state on $\rho(A)$. Then $\Lambda_{f\circ \rho}$ is finite-dimensional.
\end{lemma}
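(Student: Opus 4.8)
The plan is to unwind the definitions and observe that everything is happening in a finite-dimensional space. First I would note that if $\rho: A \to \mathcal{B}(H_\rho)$ is finite-dimensional, then $\rho(A)$ is a finite-dimensional C*-algebra acting on the finite-dimensional Hilbert space $H_\rho$, and $f$ is a state on it. The composite $f \circ \rho$ is then a state on $A$, so the GNS construction applies and produces $\Lambda_{f \circ \rho}: A \to \mathcal{B}(H_{f\circ\rho})$. The goal is to show $\dim H_{f\circ\rho} < \infty$.

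The key step is to relate $N_{f\circ\rho}$ with $\ker\rho$ and with the GNS space of $f$ on the finite-dimensional algebra $\rho(A)$. Concretely, I would argue that $\ker \rho \subseteq N_{f\circ\rho}$: if $\rho(a) = 0$ then $(f\circ\rho)(a^*a) = f(\rho(a)^*\rho(a)) = f(0) = 0$. Hence $\Lambda_{f\circ\rho}$ factors through the quotient map $A \to A/\ker\rho \cong \rho(A)$, and in fact one checks directly that $\Lambda_{f\circ\rho}$ is (unitarily equivalent to) $\Lambda_f^{\rho(A)} \circ \rho$, where $\Lambda_f^{\rho(A)}$ is the GNS representation of the state $f$ on the finite-dimensional C*-algebra $\rho(A)$. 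Since $\rho(A)$ is finite-dimensional, its GNS space $\rho(A)/N_f$ is a quotient of the finite-dimensional vector space $\rho(A)$, hence finite-dimensional; its Hilbert completion is itself. Therefore $H_{f\circ\rho}$ is finite-dimensional, as desired.

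I do not anticipate a genuine obstacle here — the statement is essentially bookkeeping about the GNS construction. The only point requiring a word of care is that the GNS space $A/N_{f\circ\rho}$, before completion, could a priori be infinite-dimensional as a quotient of $A$; the resolution is precisely the containment $\ker\rho \subseteq N_{f\circ\rho}$, which collapses it down to a quotient of the finite-dimensional algebra $\rho(A)$. One should also remark that $\rho$ need not be assumed unital or non-degenerate for this; if one wants $\Lambda_{f\circ\rho}$ itself unital, restrict attention to the support projection, but this is irrelevant to the finite-dimensionality conclusion. So the proof is short: establish the factorization $\Lambda_{f\circ\rho} \cong \Lambda_f^{\rho(A)} \circ \rho$ and invoke $\dim \rho(A) < \infty$.
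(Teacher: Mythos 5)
Your argument is correct and is essentially the paper's proof: the key containment $\ker\rho \subseteq N_{f\circ\rho}$ gives $\dim(A/N_{f\circ\rho}) \le \dim(A/\ker\rho) = \dim\rho(A) < \infty$, so $H_{f\circ\rho}$ is finite-dimensional. The extra identification $\Lambda_{f\circ\rho} \cong \Lambda_f^{\rho(A)}\circ\rho$ is fine but not needed beyond this dimension count.
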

\begin{proof} We have $N_{f\circ \rho} = \{a\in A\;|\; f(\rho(a^*a))=0\} \supseteq \ker \rho.$ Therefore
$$\dim (A/N_{f\circ \rho}) \le \dim (A/{\ker \rho}) < \infty$$ and hence $H_{f\circ \rho}$ is finite-dimensional.
\end{proof}

\begin{lemma}\label{infty} Let $\rho$ and $\rho_n$, $n\in \mathbb N$, be (possibly degenerate) representations of a $C^*$-algebra $A$ on $H$ such that $$\rho(a) = SOT-\lim \rho_n(a),$$ for any $a\in A$. Then there are $m_n\in \mathbb N$ such that $$\rho^{(\infty)}(a) = SOT-\lim \rho_n^{\oplus m_n}(a),$$ for any $a\in A$.
\end{lemma}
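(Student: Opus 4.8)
The plan is to build $\rho^{(\infty)}$ out of infinitely many copies of $\rho$ and approximate each copy in turn, one at a time, by a copy of $\rho_n$ for a suitably large $n$. Concretely, write $\rho^{(\infty)} = \bigoplus_{j=1}^{\infty}\rho$ acting on $\bigoplus_{j=1}^\infty H$. Fix a countable dense sequence of compact sets exhausting $A$ — more precisely, since it suffices to prove SOT-convergence on a fixed countable dense subset $\{a_k\}$ of $A$ and to test against a fixed orthonormal basis of $\bigoplus_j H$, I would enumerate all pairs (finite subset $F$ of $\{a_k\}$, finite subset of basis vectors, tolerance $1/\ell$) as $\ell = 1,2,\dots$.

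The key step is a greedy/diagonal choice of the multiplicities $m_n$. Since $\rho(a) = \mathrm{SOT}\text{-}\lim_n \rho_n(a)$, for each $\ell$ there is an index $n_\ell$ such that $\rho_{n_\ell}$ approximates $\rho$ to within $1/\ell$ on the data indexed by $\ell$. I would then set $m_n$ to be the number of indices $\ell$ for which $n_\ell = n$ (this may be infinite for some $n$; if that is a concern one can first pass to a strictly increasing subsequence of the $n_\ell$, harmlessly replacing some $\rho_{n_\ell}$ by $\rho_{n_{\ell'}}$ with $n_{\ell'} \ge n_\ell$, which still works because SOT-convergence is preserved along subsequences). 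With this choice, $\bigoplus_n \rho_n^{\oplus m_n}$ is a representation on a Hilbert space isomorphic to $\bigoplus_j H$, and the $j$-th block $\rho_{n_j}$ has been arranged, by construction, to approximate the $j$-th copy of $\rho$ to within $1/j$ on a set of data that eventually includes any prescribed $(a,\xi,\varepsilon)$.

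Finally I would check SOT-convergence. Fix $a\in A$, a vector $\xi \in \bigoplus_j H$, and $\varepsilon > 0$. Approximating $\xi$ by a finite sum of basis vectors and $a$ by an element of $\{a_k\}$, there is an $\ell_0$ such that for all $\ell \ge \ell_0$ the pair $(a,\xi,\varepsilon)$ (up to the approximations just made) is among the data handled at stage $\ell$; hence on all but finitely many blocks $\| \rho_{n_j}(a)\xi_j - \rho(a)\xi_j \|$ is small, and on the finitely many exceptional blocks the contribution to the norm is controlled because $\xi$ has finite tail. Summing the block estimates gives $\|\rho_n^{\oplus m_n}(a)\xi - \rho^{(\infty)}(a)\xi\| \to 0$. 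The only mildly delicate point — and the one I would be most careful about — is the bookkeeping ensuring that the multiplicities $m_n$ are simultaneously (a) a valid, well-defined assignment (finite, or handled by the subsequence trick) and (b) rich enough that every $(a,\xi,\varepsilon)$ is eventually captured; once the enumeration of test data is set up correctly this is routine, so there is no serious obstacle here. $\square$
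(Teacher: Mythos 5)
There is a genuine gap here, and it comes from approximating the wrong object. The lemma asks for a \emph{sequence} indexed by $n$ whose $n$-th term is $m_n$ copies of the single representation $\rho_n$, viewed (in the paper's convention) as a degenerate representation living on the first $m_n$ coordinates of $H^{(\infty)}$, with SOT-convergence to $\rho^{(\infty)}$ as $n\to\infty$. Your construction instead produces one representation $\bigoplus_n \rho_n^{\oplus m_n}\cong\bigoplus_j \rho_{n_j}$ in which different coordinates carry different $\rho_{n_j}$'s; that is not any term of the required sequence, and in your final claim $\|\rho_n^{\oplus m_n}(a)\xi-\rho^{(\infty)}(a)\xi\|\to 0$ there is no parameter left that tends to infinity. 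Moreover, after your subsequence trick the multiplicities $m_n=\#\{\ell:\ n_\ell=n\}$ are $0$ or $1$, and a single copy $\rho_n^{\oplus 1}$, supported on the first coordinate of $H^{(\infty)}$, can never SOT-converge to the infinite multiple $\rho^{(\infty)}$ (test on a vector supported in the second coordinate). Finally, even for the single block-diagonal object you build, the error analysis is backwards: the ``finitely many exceptional blocks'' are the \emph{initial} coordinates $j$, where the stage-$j$ data need not yet include $(a,\xi)$; there the blockwise error is of size about $2\|a\|\,\|\xi_j\|$, and the components $\xi_j$ for small $j$ are not small --- it is the tail of $\xi$ (large $j$) that is small in norm --- so the sum of the block errors is not controlled.

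What is needed is the opposite order of quantifiers, which is how the paper argues: given finitely many $a_1,\dots,a_N\in A$, vectors $\xi_1,\dots,\xi_N\in H^{(\infty)}$ and $\epsilon>0$, first choose the multiplicity $m$ so large that the truncations $\xi_{i,m}$ to the first $m$ coordinates satisfy $\|\xi_i-\xi_{i,m}\|\le\epsilon/\max_l\|a_l\|$; then, with $m$ fixed, use the hypothesis $\rho_n(a)\to\rho(a)$ in SOT to choose $n$ so that $\rho_n(a_l)\xi_i^{(j)}$ is close to $\rho(a_l)\xi_i^{(j)}$ for the finitely many components $j\le m$, with error small enough that the sum over the $m$ identical blocks of $\rho_n^{\oplus m}$ is at most $\epsilon$; the triangle inequality then gives $\|\rho^{(\infty)}(a_l)\xi_i-\rho_n^{\oplus m}(a_l)\xi_i\|\le 3\epsilon$. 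In particular all blocks of the $n$-th approximant use the \emph{same} $\rho_n$, and the multiplicity must be chosen to grow slowly enough relative to the speed of the convergence $\rho_n\to\rho$; a routine diagonalization over a countable dense family of test data (the part of your write-up that is in the right spirit) then yields the sequence $m_n$. Your blockwise scheme, with the index $n_j$ depending on the block $j$, cannot be repaired into this form without essentially redoing the argument along these lines.
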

\begin{proof} Given $a_1, \ldots, a_N\in A$, $\xi_1, \ldots, \xi_N\in H^{(\infty)}$ and $\epsilon >0$, we need to find $m, n\in \mathbb N$ such that
$$\|\rho^{(\infty)}(a_l)\xi_i - \rho_n^{\oplus m}(a_l)\xi_i\|\le 3\epsilon,$$ for any $i, l \le N.$

Let $M = \max\{\|a_i\|, i\le N\}.$
For each vector $\xi_i = (\xi_i^{(1)}, \xi_i^{(2)}, \ldots) \in H^{(\infty)}$, let $\xi_{i,m}$ denote the vector $(\xi_i^{(1)}, \xi_i^{(2)}, \ldots, \xi_i^{(m)}, 0, 0, \ldots) \in H^{(\infty)}$.  There exists $m\in \mathbb N$ such that $$\|\xi_i - \xi_{i, m}\|\le \epsilon/M, $$ for any $i\le N$.
By our assumptions there exists $n\in \mathbb N$ such that $$\|\rho(a_l)\xi_i^{(j)} - \rho_n(a_l)\xi_i^{(j)}\|\le \epsilon/\sqrt N,$$ for any $j\le m, i\le N, l\le N.$
Then for any $l, i\le N$ we have
\begin{multline*} \|\rho^{(\infty)}(a_l)\xi_i - \rho_n^{\oplus m}(a_l)\xi_i\| \le \|\rho^{(\infty)}(a_l)(\xi_i -\xi_{i, m})\| + \\ \|\rho^{(\infty)}(a_l)\xi_{i,m} - \rho_n^{\oplus m}(a_l)\xi_{i,m}\| +  \|\rho_n^{\oplus m}(a_l)(\xi_i -\xi_{i, m})\| \le 3\epsilon.
\end{multline*}
\end{proof}

Let  $A$ and $B$ be C*-algebras, $C_1,$ $C_2$ be their C*-subalgebras respectively, and $\phi: C_1\to C_2$ be an isomorphism. Let $\ast$-homomorphisms $\psi_A: A \to D$ and $\psi_B: B \to D$ be such that $\psi_A(c) = \psi_B(\phi(c))$, for any  $c\in C_1$.
The corresponding $\ast$-homomorphism from $A\ast_{C_1\cong^{\phi} C_2} B$ to $D$ will be denoted by $\sigma_{\psi_A, \psi_B}.$

\begin{lemma}\label{SOT-limit} Let $A$ and $B$ be C*-algebras and $C_1$, $C_2$ be their $C^*$-subalgebras respectively and let $\phi: C_1 \to C_2$ be an isomorphism. Let $\pi_1, \pi_{1, n}$ be representations of $A$, $\pi_2, \pi_{2, n}$ be representations of $B$,  such that $$\pi_1(c)=\pi_2(\phi(c)), \;\; \pi_{1, n}(c)=\pi_{2, n}(\phi(c)),$$  for any $c\in C_1$, $n\in \mathbb N$, and  $\pi_1 = \ast-SOT-\lim \pi_{1, n}$, $\pi_2 = \ast-SOT-\lim \pi_{2, n}$. Suppose that $x\in A\ast_{C_1\cong^{\phi}C_2} B$ and $\sigma_{\pi_1, \pi_2}(x)\neq 0$. Then there is $n\in \mathbb N$ such that $\sigma_{\pi_{1,n}, \pi_{2, n}}(x)\neq 0$.
\end{lemma}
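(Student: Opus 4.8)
The plan is to reduce the nonvanishing of $\sigma_{\pi_{1,n},\pi_{2,n}}(x)$ to the nonvanishing of $\sigma_{\pi_1,\pi_2}(x)$ by writing both as limits of the "same" vector expressions along the net of finite words. First I would recall that $A\ast_{C_1\cong^\phi C_2} B$ is densely spanned by alternating words $a_1 b_1 a_2 b_2\cdots$ with letters from $A$ and $B$, so by continuity of all the $*$-homomorphisms involved it suffices to treat $x$ lying in a fixed finite-dimensional (or at least finitely generated) subspace, and more importantly, for the final contradiction it suffices to find a single unit vector $\xi$ and a tolerance showing $\|\sigma_{\pi_{1,n},\pi_{2,n}}(x)\xi\|$ stays bounded away from $0$ for some $n$.

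Next I would set up the approximation. Write $x$ as a finite sum of alternating words $w = c_1 c_2 \cdots c_k$ where each $c_j$ lies in $A$ or in $B$. For each such word, $\sigma_{\pi_1,\pi_2}(w) = \theta(c_1)\theta(c_2)\cdots\theta(c_k)$ where $\theta$ is $\pi_1$ on $A$-letters and $\pi_2$ on $B$-letters; similarly $\sigma_{\pi_{1,n},\pi_{2,n}}(w) = \theta_n(c_1)\cdots\theta_n(c_k)$. Since $\pi_1 = \ast\text{-}SOT\text{-}\lim\,\pi_{1,n}$ and $\pi_2 = \ast\text{-}SOT\text{-}\lim\,\pi_{2,n}$ and these are representations on a common Hilbert space $H$ (here I would invoke the fact that the $\pi_i$ and $\pi_{i,n}$ act on the same $H$, compatibly on $C_1$), a standard telescoping estimate shows that $\sigma_{\pi_{1,n},\pi_{2,n}}(x) \to \sigma_{\pi_1,\pi_2}(x)$ in the strong operator topology — indeed in the $*$-strong topology, though SOT is all I need. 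Concretely, fix a unit vector $\xi$ with $\sigma_{\pi_1,\pi_2}(x)\xi\neq 0$, put $\delta = \tfrac12\|\sigma_{\pi_1,\pi_2}(x)\xi\|$, and telescope each word $w$ in $x$: replacing $\theta_n(c_j)$ by $\theta(c_j)$ one letter at a time, using that $\|\theta_n(c)\|=\|\theta(c)\|$ (they are $*$-representations, hence contractive on self-adjoints and bounded by $\|c\|$ on all of $A$ or $B$) to control the uncollapsed tail, and $SOT$-convergence applied to the finitely many vectors $\theta(c_{j+1})\cdots\theta(c_k)\xi$ to make each single-letter replacement small. This gives $n$ with $\|\sigma_{\pi_{1,n},\pi_{2,n}}(x)\xi - \sigma_{\pi_1,\pi_2}(x)\xi\| < \delta$, hence $\sigma_{\pi_{1,n},\pi_{2,n}}(x)\xi \neq 0$, so $\sigma_{\pi_{1,n},\pi_{2,n}}(x)\neq 0$.

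The only subtle point — and the step I would expect to be the real content — is the passage from an arbitrary $x\in A\ast_{C_1\cong^\phi C_2}B$ to a finite word expression: $x$ is only a norm-limit of such expressions, not literally one of them, and the telescoping bound involves the number and length of words, which grows as the approximation improves. I would handle this by first choosing a single finite word approximant $x_0$ with $\|x - x_0\| < \tfrac14\|\sigma_{\pi_1,\pi_2}(x)\|$ (so that $\sigma_{\pi_1,\pi_2}(x_0)\neq 0$ and in fact $\|\sigma_{\pi_1,\pi_2}(x_0)\xi\|$ is bounded below for a suitable $\xi$), then running the telescoping argument on the \emph{fixed} finite word $x_0$ — where the number of letters is now a fixed constant — to find $n$ with $\sigma_{\pi_{1,n},\pi_{2,n}}(x_0)\neq 0$; finally, since every $\sigma_{\pi_{1,n},\pi_{2,n}}$ is a $*$-homomorphism and hence contractive, $\|\sigma_{\pi_{1,n},\pi_{2,n}}(x) - \sigma_{\pi_{1,n},\pi_{2,n}}(x_0)\| \le \|x-x_0\|$, and choosing the approximant accurately enough relative to $\|\sigma_{\pi_1,\pi_2}(x)\|$ forces $\sigma_{\pi_{1,n},\pi_{2,n}}(x)\neq 0$ as well. (The compatibility hypotheses $\pi_1(c)=\pi_2(\phi(c))$ and $\pi_{1,n}(c)=\pi_{2,n}(\phi(c))$ are exactly what makes $\sigma_{\pi_1,\pi_2}$ and $\sigma_{\pi_{1,n},\pi_{2,n}}$ well-defined, and are not otherwise used in the estimate.)
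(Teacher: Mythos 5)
Your proposal is correct and follows essentially the same route as the paper's proof: a telescoping (equivalently, induction-on-word-length) estimate showing $\sigma_{\pi_{1,n},\pi_{2,n}}\to\sigma_{\pi_1,\pi_2}$ in SOT on finite words, followed by approximating $x$ by a fixed finite combination of words and using contractivity of the $\ast$-homomorphisms to transfer the lower bound. (The parenthetical claim $\|\theta_n(c)\|=\|\theta(c)\|$ is not needed and need not hold; the bound $\le\|c\|$ you actually use is the right one.)
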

\begin{proof} The C*-algebra $A\ast_{C_1\cong^{\phi}C_2}B$ is the closure of the span of  monomials of the form $y\! = \!i_A(a_1)i_B(b_1)i_A(a_2)\ldots$ and $y = i_B(b_1)i_A(a_2)....$ to which we further refer simply as monomials.

{\bf Claim.} For any $\epsilon >0$, a monomial $y\in A\ast_{C_1\cong^{\phi}C_2}B$, and $\xi\in H$, there is $n\in \mathbb N$ such that
$\|\sigma_{\pi_{1, n}, \pi_{2, n}}(y) \xi - \sigma_{\pi_1, \pi_2}(y)\xi\| \le \epsilon.$

{\it Proof of Claim.} We will prove the claim by induction on the length of a monomial. Suppose it is proved for monomials of length $k$ and let $y$ be a monomial of length $k+1$. Then $y$ can be written either as $y = y_1 i_A(a)$ or as $y = y_1 i_B(b)$, where $y_1$ is a  monomial of of length $k$. Say $y = y_1 i_A(a)$. Since $\pi_1 = \ast-SOT-\lim \pi_{1, n}$, there is $N$ such that for any $n\ge N$ we have
$\|(\pi_{1, n}(a) - \pi_1(a))\xi\|\le \epsilon/2.$ By induction assumption applied to $\epsilon$, $y_1$, and $\pi_1(a)\xi$, there is $N'>N$ such that for any $n\ge N'$ we have $\|\sigma_{\pi_{1}, \pi_{2}}(y_1)\pi_1(a)\xi - \sigma_{\pi_{1, n}, \pi_{2, n}}(y_1)\pi_1(a)\xi\|\le \epsilon/2.$ Hence
\begin{multline*}\|\sigma_{\pi_{1, n}, \pi_{2, n}}(y) \xi - \sigma_{\pi_1, \pi_2}(y)\xi\| = \|\sigma_{\pi_{1, n}, \pi_{2, n}}(y_1)\pi_{1, n}(a)\xi - \sigma_{\pi_1, \pi_2}(y_1)\pi_1(a)\xi\| \\ \le \|\sigma_{\pi_{1, n}, \pi_{2, n}}(y_1)\pi_{1}(a)\xi - \sigma_{\pi_1, \pi_2}(y_1)\pi_1(a)\xi\| + \|\sigma_{\pi_{1, n}, \pi_{2, n}}(y_1)(\pi_{1, n}(a) - \pi_1(a))\xi\| \le  \epsilon.\end{multline*}
Claim is proved.

Now one easily deduces from the claim that $\sigma_{\pi_{1}, \pi_{2}} = SOT-\lim \sigma_{\pi_{1, n}, \pi_{2, n}}$ which in its turn easily implies the statement of the lemma. Indeed, we find $\xi\in H$ such that $\|\sigma_{\pi_1, \pi_2}(x)\xi\|\ge \|\sigma_{\pi_1, \pi_2}(x)\|/2$ and  a monomial $y$ such that  $\|x-y\|\le \|\sigma_{\pi_1, \pi_2}(x)\|/8$. By the claim there is $n$ such that $$\|\sigma_{\pi_{1, n}, \pi_{2, n}}(y) \xi - \sigma_{\pi_1, \pi_2}(y)\xi\| \le
\|\sigma_{\pi_1, \pi_2}(x)\|/8.$$ Hence
\begin{multline*} \|\sigma_{\pi_{1, n}, \pi_{2, n}}(x)\|\ge \|\sigma_{\pi_{1, n}, \pi_{2, n}}(y)\xi\| - \|\sigma_{\pi_{1, n}, \pi_{2, n}}(y-x)\xi\|\\ \ge \|\sigma_{\pi_1, \pi_2}(y)\xi\| - \|\sigma_{\pi_{1, n}, \pi_{2, n}}(y) \xi - \sigma_{\pi_1, \pi_2}(y)\xi\|  - \|\sigma_{\pi_1, \pi_2}(x)\|/8
\\ \ge \|\sigma_{\pi_1, \pi_2}(x)\xi\| - \|\sigma_{\pi_1, \pi_2}(x-y)\xi\| - \|\sigma_{\pi_1, \pi_2}(x)\|/4 \ge \|\sigma_{\pi_1, \pi_2}(x)\|/8 \neq 0.
\end{multline*}

\end{proof}

\begin{theorem}\label{main} Let $G_1, G_2$ be amenable discrete groups, $C_1$ and $C_2$ be their finitely generated central subgroups respectively, and $\phi: C_1 \to C_2$ an isomorphism such that there exist $(C_1, C_2, \phi)$-compatible filtrations of $G_1$ and $G_2$. Then $C^*(G_1\ast_{C_1\cong^{\phi} C_2} G_2)$ is RFD.
\end{theorem}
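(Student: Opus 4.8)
The plan is to show that for every nonzero $x \in C^*(G_1 \ast_{C_1 \cong^\phi C_2} G_2) = C^*(G_1) \ast_{C^*(C_1)} C^*(G_2)$ there is a finite-dimensional representation not vanishing on $x$. The starting point is to understand the "building blocks": since $C_1$ is central and finitely generated, its $1$-dimensional representations $\lambda$ (characters of the abelian group $C_1$) are dense enough that it suffices, by a standard direct-integral/spectral argument over $\widehat{C_1}$, to separate points of each "fiber" $C^*_\lambda(G_i) := \overline{\mathbb{C}G_i}^{\|\cdot\|_\lambda}$. First I would use Corollary \ref{kernel}(ii): since $G_i$ is amenable and $C_i$ is central, $C^*_\lambda(G_i) = \Lambda_{\tilde\lambda^{G_i}}(C^*(G_i))$, the image of the GNS-representation attached to the induced character $\tilde\lambda^{G_i}$. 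So $x \neq 0$ means $\sigma_{\Lambda_{\tilde\lambda^A}, \Lambda_{\tilde\lambda^B}}(x) \neq 0$ for some $1$-dimensional $\lambda$ on $C_1$ (here I am identifying $C_1 \cong C_2$ via $\phi$); note these two GNS-representations agree on $C^*(C_1)$ because each equals $\lambda(\cdot)1$ there, by Lemma \ref{scalar}.

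Next I would bring in Theorem \ref{TracesInducedFromCenter}: using the $(C_1, C_2, \phi)$-compatible filtrations (which exist by hypothesis), there are finite-dimensional representations $\pi_n^A$ of $G_1$ and $\pi_n^B$ of $G_2$ with $\pi_n^A(c)$ and $\pi_n^B(\phi(c))$ equal scalar operators for $c \in C_1$, and $\tilde\lambda^A = \lim \operatorname{tr}\pi_n^A$, $\tilde\lambda^B = \lim \operatorname{tr}\pi_n^B$ pointwise. After inflating to a common infinite-multiplicity space (the Remark after Theorem \ref{TracesInducedFromCenter}, plus Lemma \ref{infty}), the states $\operatorname{tr}\pi_n^A \to \tilde\lambda^A$ $\ast$-weakly, so by the Exel–Loring Lemma \ref{ExelLoring} there are coisometries realizing $\Lambda_{\tilde\lambda^A}$ as an SOT-limit of compressions of $\Lambda_{\operatorname{tr}\pi_n^A \circ \pi_n^A}$, and the latter is finite-dimensional by Lemma \ref{GNSfin-dim}. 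The crucial point is that since $\pi_n^A$ is scalar on $C_1$, the GNS-representation $\Lambda_{\operatorname{tr}\pi_n^A \circ \pi_n^A}$ is (a multiple of) $\pi_n^A$ itself on the relevant summand, still scalar on $C_1$ with the same scalar as $\pi_n^B$ on $C_2$; and one needs the compressions $V_n^* \Lambda V_n$ to remain scalar on $C_1$ — this is where I would use Corollary \ref{Don} (Hadwin) instead of the raw Exel–Loring coisometries, compressing by unitaries onto $H \oplus (\text{garbage})$ so that the central scalar structure is preserved exactly. Thus I obtain finite-dimensional representations $\rho_n^A$ of $G_1$, $\rho_n^B$ of $G_2$, each scalar on its central subgroup with matching scalars, and $\ast$-SOT-convergent to $\Lambda_{\tilde\lambda^A}$, $\Lambda_{\tilde\lambda^B}$ respectively (convergence can be upgraded to $\ast$-SOT after a further inflation, since on the finite-multiplicity part the off-diagonal corners in Hadwin's proposition can be killed — or one simply uses Lemma \ref{SOT-limit}, which only needs $\ast$-SOT, and that is what Exel–Loring together with an inflation delivers).

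Finally I would apply Lemma \ref{SOT-limit} with $A = C^*(G_1)$, $B = C^*(G_2)$, $C_i^* = C^*(C_i)$, $\pi_1 = \Lambda_{\tilde\lambda^A}$, $\pi_{1,n} = \rho_n^A$ (suitably inflated), and likewise for $B$: since $\rho_n^A$ and $\rho_n^B$ agree on $C^*(C_1)$ (both being the same scalars) the hypotheses are met, and from $\sigma_{\pi_1,\pi_2}(x) \neq 0$ we conclude $\sigma_{\rho_n^A, \rho_n^B}(x) \neq 0$ for some $n$. But $\sigma_{\rho_n^A, \rho_n^B}$ factors through a finite-dimensional representation of $G_1 \ast_{C_1} G_2$ — here I would use that the amalgamated free product of two finite-dimensional C*-algebras over a common subalgebra has finite-dimensional representations separating any given finitely many monomials (or, more simply, that $\rho_n^A \ast \rho_n^B$ maps into $M_k \ast_{M_j} M_l$ and one post-composes with any finite-dimensional representation of that — which exists and detects $\sigma_{\rho_n^A,\rho_n^B}(x)$ since the image is already a quotient that can be realized finite-dimensionally on the span of the monomials approximating $x$). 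This produces the desired finite-dimensional representation of $C^*(G_1 \ast_{C_1 \cong^\phi C_2} G_2)$ nonzero on $x$.

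The main obstacle I anticipate is the bookkeeping in the middle step: ensuring that when one passes from $\operatorname{tr}\pi_n$-convergence of characters to $\ast$-SOT-convergence of \emph{representations} on a fixed Hilbert space via Exel–Loring/Hadwin, the "scalar on the center with matching scalars on both sides" property survives the compressions and inflations \emph{simultaneously} for $G_1$ and $G_2$, so that Lemma \ref{SOT-limit} genuinely applies. A secondary subtlety is the reduction to a single character $\lambda$: one must argue that $\sigma_{\Lambda_{\tilde\lambda^A}, \Lambda_{\tilde\lambda^B}}$ over all $\lambda \in \widehat{C_1}$ jointly separate points of $C^*(G_1) \ast_{C^*(C_1)} C^*(G_2)$, which should follow from the fact that $C^*(C_1) \cong C(\widehat{C_1})$ lies centrally (after quotienting; $C_1$ may be infinite but finitely generated, so $\widehat{C_1}$ is a compact abelian group) and the direct-integral decomposition of the amalgamated product over the spectrum of this central subalgebra.
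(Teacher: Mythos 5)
Your overall skeleton is the same as the paper's: fix a central character $\lambda$ of $C_1$, pass to the GNS representations of the induced characters $\tilde\lambda^{(i)}$, approximate these by finite-dimensional representations with matching central scalars via Theorem \ref{TracesInducedFromCenter}, Lemma \ref{ExelLoring}, Lemma \ref{GNSfin-dim} and Lemma \ref{infty}, and finish with Lemma \ref{SOT-limit}. The genuine gap is at your first step, the assertion that $x\neq 0$ ``means $\sigma_{\Lambda_{\tilde\lambda^A},\Lambda_{\tilde\lambda^B}}(x)\neq 0$ for some $\lambda$.'' The central decomposition over $\widehat{C_1}$ only gives that the image of $x$ in some fiber $C^*_\lambda(G_1)\ast_{\mathbb C}C^*_\lambda(G_2)$ is nonzero, and Corollary \ref{kernel}(ii) only says that $\Lambda_{\tilde\lambda^{(i)}}$ is a faithful representation of the $i$-th free factor. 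But $\Lambda_{\tilde\lambda^{(1)}}$ and $\Lambda_{\tilde\lambda^{(2)}}$ live on different Hilbert spaces, so $\sigma_{\Lambda_{\tilde\lambda^A},\Lambda_{\tilde\lambda^B}}$ is not even defined; and, crucially, a pair of faithful representations of the factors realized on a common space need not detect a prescribed nonzero element of the free product (e.g.\ sending both canonical unitary generators of $C^*(\mathbb Z)\ast_{\mathbb C}C^*(\mathbb Z)\cong C^*(F_2)$ to the same unitary gives a pair of faithful representations of the factors whose free product representation is far from faithful). One cannot repair this by separating points inside the fiber either: for the discrete Heisenberg group and irrational $\lambda$ the fiber is the irrational rotation algebra, which has no finite-dimensional representations at all. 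What is needed is exactly the absorption argument of Claim 1 in the paper's proof: start from an irreducible $\pi$ with $\pi(x)\neq 0$ (so $\pi$ is the scalar $\lambda$ on the center), use Corollary \ref{kernel}(i) for the kernel containment, then Voiculescu's theorem (the rank condition, which is why the infinite ampliations $(\Lambda_{\tilde\lambda^{(i)}})^{(\infty)}$ appear) together with Corollary \ref{Don} to write each $\pi|_{C^*(G_i)}$ as a $\ast$-SOT limit of unitary conjugates of $(\Lambda_{\tilde\lambda^{(i)}})^{(\infty)}$, and finally Lemma \ref{SOT-limit} to transfer $\pi(x)\neq 0$ to a spatially compatible conjugate pair. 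Your proposal never invokes Voiculescu's theorem and offers no substitute for this step, so the passage from ``$x\neq 0$'' to a representation built from the $\Lambda_{\tilde\lambda^{(i)}}$'s that does not kill $x$ is missing, and it is the heart of the proof.

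Two smaller points. First, your worry that the Exel--Loring compressions destroy the scalar structure on the center is unfounded: if $\Lambda_{tr\,\pi_n}(c)=\chi(c)1$ then $V_n^*\Lambda_{tr\,\pi_n}(c)V_n=\chi(c)V_n^*V_n$, which is scalar on the essential subspace, and that is all the later argument uses; Corollary \ref{Don} is not a replacement for the coisometries of Lemma \ref{ExelLoring} (in the paper it is used in the absorption step, not here). Second, the closing appeal to finite-dimensional representations of $M_k\ast_{M_j}M_l$ is both unnecessary and unreliable (full amalgamated free products of finite-dimensional C*-algebras need not be RFD, cf.\ \cite{BD}, \cite{ADEL}): once the representations $\rho_n^A,\rho_n^B$ have been adjusted, by taking suitable finite multiples on a common finite-dimensional space, so that they literally agree on $C^*(C_1)$, the universal property of the amalgamated free product already yields the finite-dimensional representation $\sigma_{\rho_n^A,\rho_n^B}$; that dimension-matching bookkeeping is exactly what the paper carries out before applying Lemma \ref{SOT-limit}, and it needs to be done explicitly rather than delegated to an RFD statement about matrix amalgams.
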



\begin{proof} Let $0\neq x\in C^*(G_1\ast_{C_1\cong^{\phi} C_2} G_2).$ We are going to find a finite-dimensional representation of $C^*(G_1\ast_{C_1\cong^{\phi} C_2} G_2)$ which does not vanish on $x$.  Let $\pi$ be an irreducible representation of $C^*(G_1\ast_{C_1\cong^{\phi} C_2} G_2)$  on a Hilbert space $K$ such that $\pi(x)\neq 0$. We can assume that $\dim K = \infty$. Since $\pi$ is irreducible, there is a 1-dimensional representation $\lambda$ of $C_1$ such that $\pi(c) = \lambda(c) 1 = \pi(\phi(c)), $ for each $c\in C_1$. Let
 $$\tilde\lambda^{(1)} (g)= \begin{cases} \lambda(g),\; g\in C_1 \\ 0,\; g\in G_1\setminus C_1 \end{cases}, \;\tilde\lambda^{(2)} (g)= \begin{cases} \lambda(\phi^{-1}(g)),\; g\in C_2 \\ 0,\; g\in G_2\setminus C_2 \end{cases}.$$
 Let $\Lambda_{\tilde\lambda^{(i)}}$, $i=1, 2$, be the GNS-representation of $G_i$ with respect to the trace $\tilde\lambda^{(i)}$. The corresponding Hilbert spaces will be denoted by $H_i$, $i=1, 2$.

\medskip

{\bf Claim 1:} There exist unitaries $U_i\in B(K, H_i^{(\infty)})$, $i=1, 2$, such that
$$U_1^*\left(\Lambda_{\tilde\lambda^{(1)}}\right)^{(\infty)}(c)U_1 = U_2^*\left(\Lambda_{\tilde\lambda^{(2)}}\right)^{(\infty)}(\phi(c))U_2,$$ for any $c\in C_1$, and  $$\sigma_{U_1^*\left(\Lambda_{\tilde\lambda^{(1)}}\right)^{(\infty)}U_1, \; U_2^*\left(\Lambda_{\tilde\lambda^{(2)}}\right)^{(\infty)}U_2} (x) \neq 0.$$

Proof of Claim 1: By Corollary \ref{kernel}, $\ker \pi|_{C^*(G_i)} \supseteq \ker \Lambda_{\tilde\lambda^{(i)}}$, $i=1, 2$. Hence $$ rank \left(\pi|_{C^*(G_i)}\oplus \left(\Lambda_{\tilde\lambda^{(i)}}\right)^{\infty}\right)(a) = rank \left(\Lambda_{\tilde\lambda^{(i)}}\right)^{\infty}(a),$$ for any $a\in C^*(G_i)$. By Vociulescu's theorem the representations $\pi|_{C^*(G_i)}\oplus \left(\Lambda_{\tilde\lambda^{(i)}}\right)^{\infty}$ and $\left(\Lambda_{\tilde\lambda^{(i)}}\right)^{\infty}$ are approximately unitarily equivalent. On the other hand, by Corollary \ref{Don} the representation $\pi|_{C^*(G_i)}$ is the $\ast$-strong limit of unitary conjugates of $\pi|_{C^*(G_i)}\oplus \left(\Lambda_{\tilde\lambda^{(i)}}\right)^{\infty}$. All together this says us that $\pi|_{C^*(G_i)}$ is the $\ast$-strong limit of unitary conjugates of $\left(\Lambda_{\tilde\lambda^{(i)}}\right)^{\infty}$. We notice also that since by Lemma \ref{scalar}
$$\left(\Lambda_{\tilde\lambda^{(1)}}\right)^{\infty}(c) = \tilde\lambda^{(1)}(c)1 = \lambda(c)1 =  \tilde\lambda^{(2)}(\phi(c))1 = \left(\Lambda_{\tilde\lambda^{(2)}}\right)^{\infty}(\phi(c)),$$ the same is true for any unitary conjugates of $\left(\Lambda_{\tilde\lambda^{(i)}}\right)^{\infty}$. Claim follows now from Lemma \ref{SOT-limit}.

\medskip

\medskip

{\bf Claim 2:}  There exist finite-dimensional representations $\pi_n^{(i)} $ of $G_i$, $i=1, 2$, such that for any $c\in C_1$,
$\pi_n^{(1)}(c)$ and $\pi_n^{(2)}(\phi(c))$ are scalar matrices (of possibly different sizes) with the corresponding scalars being equal to each other, and \begin{equation} \tilde\lambda^{(i)}(a) = \lim_{n\to \infty} tr \pi_n^{(i)}(a),\end{equation} for any $a\in C^*(G_i)$.

Proof of Claim 2: By Theorem \ref{TracesInducedFromCenter} there exist finite-dimensional representations $\pi_n^{(i)} $ of $G_i$, $i=1, 2$, such that for any $c\in C_1$,
$\pi_n^{(1)}(c)$ and $\pi_n^{(2)}(\phi(c))$ are scalar matrices with the corresponding scalars being equal to each other, and \begin{equation} \tilde\lambda^{(i)}(g) = \lim_{n\to \infty} tr \pi_n^{(i)}(g),\end{equation} for any $g\in G_i$. As $\mathbb C G_i$ is dense in $C^*(G_i)$ we conclude that \begin{equation} \tilde\lambda^{(i)}(a) = \lim_{n\to \infty} tr \pi_n^{(i)}(a),\end{equation} for any $a\in C^*(G_i)$. Claim 2 is proved.

\medskip

By Lemma \ref{ExelLoring} there exist coisometries $V_n^{(i)}: H_i \to H_{tr {\pi_n^{(i)}}}$ such that for the representations $\rho_n^{(i)} = {V_n^{(i)}}^*\Lambda_{tr {\pi_n^{(i)}}} V_n^{(i)}$ we have
$$\Lambda_{\tilde\lambda^{(i)}}(a) = SOT-\lim \rho_n^{(i)}(a),$$ $a\in C^*(G_i)$.
By Lemma \ref{GNSfin-dim}, for each $n\in \mathbb N$, $\Lambda_{tr {\pi_n^{(i)}}}$,  $i=1, 2$,  are finite-dimensional representations (size might depend on $i$) and by Lemma \ref{scalar}, $\Lambda_{tr{\pi_n^{(1)}}}(c)$ and $\Lambda_{tr{\pi_n^{(2)}}}(\phi(c))$,    are scalar matrices of possibly different sizes but with the same scalars. It follows that the same is true for  $\rho_n^{(i)}$, $i=1, 2$. By Lemma \ref{infty} there are finite multiples of $\rho_n^{(i)}$, let us call them $\tilde \rho_n^{(i)}$, such that
\begin{equation}\label{uparrow100}{\Lambda_{\tilde\lambda^{(i)}}}^{(\infty)}(a) = SOT-\lim \tilde\rho_n^{(i)}(a),\end{equation} and we still have  that for each $c\in C_1$,  $\tilde\rho_n^{(1)}(c)$ and $\tilde\rho_n^{(2)}(\phi(c))$  are scalar matrices of possibly different size but with the same scalars. Let us denote by $H_n^{(i)}\subset H_i^{(\infty)}$ the essential spaces of the representations $\tilde \rho_n^{(i)}$. It follows from (\ref{uparrow100}) that $SOT\!-\!\lim P_{H_n^{(i)}} =~1_{H_i^{(\infty)}}$. Now we define a finite-dimensional subspace $K_n^{(i)}$ of $K$
by $$K_n^{(i)} = U_i^{-1}(H_n^{(i)}),$$ (here $U_i$'s are the unitaries from claim 1). We define a unitary $U_{n, i}: K_n^{(i)}\to H_n^{(i)}$  by
$$U_{n, i} = U_i|_{K_n^{(i)}}.$$ Then  $U_i = \ast-SOT-\lim U_{n, i}$ and we obtain  \begin{equation}\label{MainNewEq}U_i^*{\Lambda_{\tilde\lambda^{(i)}}}^{(\infty)}(a)U_i = SOT-\lim U_{n, i}^*\tilde\rho_n^{(i)}(a)U_{n, i},\end{equation}
 and we still have  that for each $c\in C_1$,  the finite-dimensional operators  $U_{n, 1}^*\tilde\rho_n^{(1)}(c)U_{n, 1}$ and $U_{n, 2}^*\tilde\rho_n^{(2)}(\phi(c))U_{n, 2}$  are scalar and the corresponding scalars coincide.

 Now for each $n$ we choose a finite-dimensional subspace $\tilde K_n$ of $K$ containing both the essential subspace of $U_{n, 1}^*\tilde\rho_n^{(1)}U_{n, 1}$ and of $U_{n, 2}^*\tilde\rho_n^{(2)}U_{n, 2}$  with dimension of $\tilde K_n$ being a common multiple of $\dim U_{n, 1}^*\tilde\rho_n^{(1)}U_{n, 1}$ and $\dim U_{n, 2}^*\tilde\rho_n^{(2)}U_{n, 2}$. We take   appropriate multiples of $U_{n, 1}^*\tilde\rho_n^{(1)}U_{n, 1}$ and $U_{n, 2}^*\tilde\rho_n^{(i)}U_{n, 2}$ on $\tilde K_n$ and we call them $\bar\rho_n^{(i)},$ $i=1, 2$. Then $\bar\rho_n^{(i)}$, $i=1, 2$, live on the same finite-dimensional space $\tilde K_n$, $\bar\rho_n^{(1)}(c) = \bar \rho_n^{(2)}(\phi(c))$ (and is  a scalar operator on $\tilde K_n$ but it will not be used anymore), for any $c\in C_1$, and since they are extensions of $U_{n, i}^*\tilde\rho_n^{(i)}U_{n, i}$, by (\ref{MainNewEq}) we still have (see e.g. [\cite{ExelLoring}, Lemma 3.1]) $$U_i^*{\Lambda_{\tilde\lambda^{(i)}}}^{(\infty)}(a)U_i = SOT-\lim \bar\rho_n^{(i)}(a),$$ for any $a\in C^*(G_i)$. By the claim 1 and by Lemma \ref{SOT-limit} there is $n\in \mathbb N$ such that $\sigma_{\rho_n^{(1)}, \rho_n^{(2)}}(x)\neq 0$.
\end{proof}

\begin{corollary}\label{CorollaryMainCases} Let $G_1, G_2$ be amenable discrete groups, $C_1$ and $C_2$ be their finitely generated central subgroups respectively, and $\phi: C_1 \to C_2$ an isomorphism. Then $C^*(G_1\ast_{C_1\cong^{\phi} C_2} G_2)$ is RFD if any of the following conditions holds:

(i) $G_1$, $G_2$ are polycyclic-by-finite;

(ii) $G_1$ is RF, $G_1/C_1$ is RF, $G_2$ is polycyclic-by-finite;


(iii) $G_1$ is RF, $C_1 = Z(G_1)$, $G_2$ is polycyclic-by-finite;


(iv) $G_1=G_2$ is RF, $C_1 = C_2$, $\phi=id$, $G_1/C_1$ is RF;

(v) $G_1=G_2$ is RF, $C_1 = Z(G_1) = C_2$, $\phi=id$.

\end{corollary}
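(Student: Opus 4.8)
The plan is to obtain the corollary as a direct consequence of Theorem~\ref{main} together with the filtration results of Section~3, so that no genuinely new argument is required. First I would record the common reduction: in every one of the five cases the groups $G_1,G_2$ are amenable and the subgroups $C_1,C_2$ are finitely generated and central by the standing hypotheses of the statement (and where polycyclic-by-finite groups occur one may note independently that such groups are amenable, RF, and, by Malcev, ERF). Hence, by Theorem~\ref{main}, in each case it remains only to produce $(C_1,C_2,\phi)$-compatible filtrations of $G_1$ and $G_2$ in the sense of Definition~\ref{filtration}.

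For cases (i), (ii), (iii) I would simply invoke Corollary~\ref{WhenCompFiltrExists2} with $A=G_1$, $B=G_2$, $H=C_1$, $K=C_2$: its cases (iii), (ii), (i) match our cases (i), (ii), (iii) respectively, once one uses that a central subgroup is in particular normal, and that a polycyclic-by-finite $G_1$ is in particular RF (so the ``$A$ is RF'' requirement of that corollary is satisfied). For cases (iv) and (v), where $G_1=G_2$, $C_1=C_2$ and $\phi=\mathrm{id}$, I would argue directly: in (iv), $C_1$ is normal in $G_1$ and $G_1/C_1$ is RF, so Corollary~\ref{QuotientRF} yields a $C_1$-filtration $\{N_\lambda\}_{\lambda\in\Lambda}$ of $G_1$; taking the same family on both copies, the assignment $hN_\lambda\mapsto hN_\lambda$ is the identity automorphism of $C_1 N_\lambda/N_\lambda$, so $\{N_\lambda\}_{\lambda\in\Lambda}$ is $(C_1,C_1,\mathrm{id})$-compatible with itself. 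Case (v) reduces to (iv): when $C_1=Z(G_1)$, Corollary~\ref{QuotientCenter} says precisely that $G_1/Z(G_1)$ is RF, so the argument of (iv) applies with $C_1=Z(G_1)$. Applying Theorem~\ref{main} in each case then finishes the proof.

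I do not expect a real obstacle here: the corollary is bookkeeping on top of Theorem~\ref{main} and Section~3. The only point needing a little care is hypothesis-matching --- making sure the amenability and finite-generation requirements of Theorem~\ref{main} are met simultaneously with the RF/ERF inputs feeding Corollary~\ref{WhenCompFiltrExists2}, and that in cases (iv)--(v) the self-compatibility of a single filtration is read off correctly from Definition~\ref{filtration}.
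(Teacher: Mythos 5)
Your proposal is correct and follows essentially the same route as the paper: cases (i)--(iii) are obtained from Theorem \ref{main} via Corollary \ref{WhenCompFiltrExists2}, and cases (iv)--(v) by producing a single $C_1$-filtration from Corollary \ref{QuotientRF} (resp.\ Corollary \ref{QuotientCenter}) and using it on both copies. The case-matching and hypothesis checks you spell out are exactly the bookkeeping the paper leaves implicit.
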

\begin{proof}
(i), (ii),  (iii) follow  from Theorem \ref{main} and Corollary \ref{WhenCompFiltrExists2}.




\noindent (iv): By Corollary \ref{QuotientRF} there is a $C_1$-filtration of $G_1$. Taking the same filtration of $G_2$ we obtain compatible filtrations and  the statement follows from Theorem \ref{main}.

\noindent (v) follows from (iv) and Corollary \ref{QuotientCenter}.

\end{proof}

With an almost identical to Theorem \ref{main}  proof one can prove the following, formally even slightly stronger, statement.

\begin{theorem}\label{NewTheorem} Let $G_1, G_2$ be amenable discrete groups, and $C$ be a finitely-generated central subgroup in both of them, such that $G_1/C$ and $G_2/C$ are RF.  If $G_1\ast_{C} G_2$ is RF, then $C^*(G_1\ast_{C} G_2)$ is RFD.
\end{theorem}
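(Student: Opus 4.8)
The plan is to deduce this from Theorem \ref{main}: I will show that the only hypothesis of Theorem \ref{main} not already present here --- the existence of $(C,C,\mathrm{id})$-compatible filtrations of $G_1$ and $G_2$ --- follows from the assumption that $G_1\ast_C G_2$ is RF together with the RF-ness of $G_1/C$ and $G_2/C$. So the heart of the matter is the following claim, after which Theorem \ref{main} (applied with $C_1=C_2=C$, $\phi=\mathrm{id}$) finishes the proof; alternatively one re-runs the proof of Theorem \ref{main} verbatim, using the claim at the single point where Claim 2 of that proof invokes Theorem \ref{TracesInducedFromCenter}.

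\emph{Claim.} Under the hypotheses of the theorem there exist $(C,C,\mathrm{id})$-compatible filtrations of $G_1$ and $G_2$.

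To prove the claim I will check condition (ii) of Proposition \ref{ReformulationFiltr}. Put $G=G_1\ast_C G_2$. Since $C$ commutes with all of $G_1$ and all of $G_2$, it is central in $G$, and by the universal property $G/C\cong (G_1/C)\ast (G_2/C)$, which is RF by Gruenberg's theorem \cite{Gru}. Given $a_1,\dots,a_n\in G_1\setminus C$, $b_1,\dots,b_m\in G_2\setminus C$ and finitely many elements of $C$, I would first use RF-ness of $G/C$ to produce $N_1\triangleleft_f G$ with $C\subseteq N_1$ and $a_i,b_j\notin N_1$ (their images are nontrivial in $G/C$ because $G_1/C$ and $G_2/C$ embed in the free product), and then use RF-ness of $G$ itself to produce $N_2\triangleleft_f G$ separating the prescribed elements of $C$. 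Set $N=N_1\cap N_2\triangleleft_f G$. Because $CN\subseteq CN_1=N_1$ and $a_i,b_j\notin N_1$, we have $a_i,b_j\notin CN$, while $N\subseteq N_2$ keeps the chosen elements of $C$ pairwise distinct modulo $N$. Restricting the quotient map $q: G\to G/N$ to $G_1$ and to $G_2$ yields surjections $f^{G_1},f^{G_2}$ onto finite groups with $f^{G_1}(C)=q(C)=f^{G_2}(C)$; the required compatibility isomorphism $f^{G_1}(c)\mapsto f^{G_2}(c)$ is then the identity of $q(C)$, and conditions 1)--4) of Proposition \ref{ReformulationFiltr} are exactly what was arranged above. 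This proves the claim, and with it the theorem.

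The only subtle point is the tension inside the claim: forcing $f^{G_1}(a_i)\notin f^{G_1}(C)$ and $f^{G_2}(b_j)\notin f^{G_2}(C)$ calls for a finite quotient in which $C$ is killed, whereas separating the points of $C$ calls for one in which $C$ survives; this is resolved by intersecting the two finite-index subgroups and exploiting $CN\subseteq N_1$. Everything else is routine bookkeeping plus the direct appeal to Theorem \ref{main}, which carries amenability and finite generation of $C$ through unchanged.
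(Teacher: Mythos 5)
Your proposal is correct, but it takes a different route from the paper's. The paper does not invoke Theorem \ref{main} as a black box; it says the statement is proved by an almost identical argument, i.e.\ one re-runs the proof of Theorem \ref{main}, the only real change being where Claim 2 gets its finite-dimensional representations: under the present hypotheses the amalgam $G=G_1\ast_C G_2$ is RF and $G/C\cong (G_1/C)\ast(G_2/C)$ is RF by Gruenberg \cite{Gru}, so by Corollary \ref{QuotientRF} there is a $C$-filtration of $G$ itself, Corollary \ref{CorollaryTracesInducedFromCenter} applies to $G$, and restricting the resulting representations to $G_1$ and $G_2$ gives representations that automatically agree and are scalar on $C$, after which the analytic part of the proof of Theorem \ref{main} runs unchanged (amenability of $G_i$ is still what feeds Corollary \ref{kernel}). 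You instead convert the same two RF hypotheses into $(C,C,\mathrm{id})$-compatible filtrations via Proposition \ref{ReformulationFiltr}(ii), by restricting finite quotients of $G$ to the two factors; your key step $CN\subseteq CN_1=N_1$ with $N=N_1\cap N_2$ is sound, the images of the $a_i,b_j$ are indeed nontrivial in $G/C$ since $G_1,G_2$ embed in $G$ and $G_i/C$ embeds in the free product, and conditions 1)--4) are verified exactly as you say, so Theorem \ref{main} applies verbatim. What your reduction buys is a clarification the paper leaves implicit: since compatible filtrations conversely yield residual finiteness of $G_1\ast_C G_2$ (Baumslag \cite{Baumslag}) and of $G_i/C$ (Proposition \ref{QuotientRF0}), the hypotheses of Theorem \ref{NewTheorem} and of Theorem \ref{main} (in the common-central-subgroup, $\phi=\mathrm{id}$ case) are actually equivalent, so the ``formally stronger'' statement is only formally so, and no part of the analytic argument needs to be repeated; the paper's route, on the other hand, bypasses Proposition \ref{ReformulationFiltr} entirely by working with a single character approximation on the amalgam.
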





In \cite{BekkaLouvet} Bekka and Louvet proved that an amenable group is RFD if and only if it is MAP. This, together with Proposition \ref{MAPhenceRF}, implies that a finitely generated amenable group is RFD if and only if it is RF. The corollaries below produce new classes of groups for which the properties RF and RFD are equivalent. Bekka and Louvet's result in the case of finitely generated groups can be obtained from the first corollary below by taking one of the groups and the amalgamating subgroup trivial.



\begin{corollary}\label{NewTheorem2} Let $G_1, G_2$ be finitely-generated amenable groups,  $C$ -- a finitely-generated central subgroup in both, such that $G_1/C$ and $G_2/C$ are RF.  Then \newline $C^*(G_1~\ast_{C}~G_2)$ is RFD if and only if $G_1\ast_{C} G_2$ is RF.
\end{corollary}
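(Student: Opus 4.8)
The plan is to combine Theorem \ref{NewTheorem} with the elementary relationship between the RFD property of $C^*(G)$ and residual finiteness of a finitely generated group $G$; the statement is essentially a repackaging, so the work has already been done.

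First, for the ``if'' direction, I would assume $G_1\ast_{C} G_2$ is RF. Then all hypotheses of Theorem \ref{NewTheorem} are satisfied: $G_1$ and $G_2$ are amenable, $C$ is a finitely generated central subgroup of both, $G_1/C$ and $G_2/C$ are RF, and $G_1\ast_{C} G_2$ is RF. Hence Theorem \ref{NewTheorem} gives that $C^*(G_1\ast_{C} G_2)$ is RFD, and there is nothing further to prove.

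Second, for the ``only if'' direction, I would assume $C^*(G_1\ast_{C} G_2)$ is RFD. Since a discrete group embeds into its full group C*-algebra via $g\mapsto \delta(g)$, any family of finite-dimensional representations of $C^*(G_1\ast_{C} G_2)$ that separates the points of $C^*(G_1\ast_{C} G_2)$ restricts (via the universal property) to a family of finite-dimensional unitary representations of $G_1\ast_{C} G_2$ separating its points; thus $G_1\ast_{C} G_2$ is MAP. Because $G_1$ and $G_2$ are finitely generated, so is $G_1\ast_{C} G_2$, and Proposition \ref{MAPhenceRF} then yields that $G_1\ast_{C} G_2$ is RF.

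I expect no genuine obstacle: the entire content sits in Theorem \ref{NewTheorem}. The only point deserving care is that finite generation of $G_1$ and $G_2$ is essential for the ``only if'' direction — it is exactly what upgrades MAP to RF through Malcev's theorem inside the proof of Proposition \ref{MAPhenceRF} — whereas amenability of $G_1\ast_{C} G_2$ itself plays no role in that direction.
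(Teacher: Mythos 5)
Your proposal is correct and follows exactly the paper's route: the ``if'' direction is a direct application of Theorem \ref{NewTheorem}, and the ``only if'' direction passes from RFD to MAP and then to RF via Proposition \ref{MAPhenceRF}, using finite generation of $G_1\ast_C G_2$. No gaps to report.
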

\begin{proof} The "if" part is proved in Theorem \ref{NewTheorem}.

"Only if": If $C^*(G_1\ast_{C} G_2)$ is RFD, then $G_1\ast_{C} G_2$ is MAP. Since $G_1\ast_{C} G_2$ is finitely generated, it is  RF by Proposition \ref{MAPhenceRF}.
\end{proof}

\begin{corollary} Let $G_1, G_2$ be finitely generated amenable groups with isomorphic finitely generated centers. Then $C^*(G_1\ast_{Z(G_1)\cong Z(G_2)} G_2)$ is RFD if and only if $G_1\ast_{Z(G_1)\cong Z(G_2)} G_2$ is RF.
\end{corollary}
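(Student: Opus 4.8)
The plan is to read this off from Corollary \ref{NewTheorem2}, applied with the amalgamating subgroup taken to be $Z(G_1)$, identified with $Z(G_2)$ via the given isomorphism $\phi$; the only thing that really needs to be checked is that the hypotheses of that corollary — in particular, that $G_1/Z(G_1)$ and $G_2/Z(G_2)$ are RF — are in force. I would first dispatch the ``only if'' implication, which requires nothing beyond finite generation: if $C^*(G_1\ast_{Z(G_1)\cong^{\phi} Z(G_2)} G_2)$ is RFD, then $G_1\ast_{Z(G_1)\cong^{\phi} Z(G_2)} G_2$ is MAP, and being finitely generated (since $G_1$ and $G_2$ are), it is RF by Proposition \ref{MAPhenceRF}.

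For the ``if'' implication, suppose $\Gamma := G_1\ast_{Z(G_1)\cong^{\phi} Z(G_2)} G_2$ is RF. Since $G_1$ and $G_2$ sit inside $\Gamma$ as subgroups, they are RF, and hence by Corollary \ref{QuotientCenter} the quotients $G_1/Z(G_1)$ and $G_2/Z(G_2)$ are RF as well. At this point $G_1, G_2$ are finitely generated amenable, $Z(G_1)\cong^{\phi} Z(G_2)$ is by hypothesis a finitely generated central subgroup of each, the corresponding quotients are RF, and $\Gamma$ is RF; so Theorem \ref{NewTheorem} (equivalently, the ``if'' part of Corollary \ref{NewTheorem2}) applies and gives that $C^*(\Gamma)$ is RFD.

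The argument is short and I do not foresee a genuine obstacle. The point worth stressing — and essentially the entire content of the reduction — is that RF-ness of the amalgam $\Gamma$ is precisely what allows one to recover RF-ness of the ``centreless'' quotients $G_i/Z(G_i)$ through Corollary \ref{QuotientCenter}, since a finitely generated amenable group need not be RF in general, so the hypothesis of Corollary \ref{NewTheorem2} cannot be checked unconditionally. One could in fact package both directions uniformly: each of the two conditions in the statement forces $\Gamma$ to be RF, hence forces the hypotheses of Corollary \ref{NewTheorem2}, which then delivers the equivalence.
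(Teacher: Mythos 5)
Your proposal is correct and is essentially the paper's own argument: the paper deduces this corollary from Corollary \ref{NewTheorem2} together with Corollary \ref{QuotientCenter}, and your write-up simply makes explicit how the hypotheses of Corollary \ref{NewTheorem2} (RF-ness of $G_i/Z(G_i)$, via RF-ness of $G_i$ as subgroups of the RF amalgam, plus Proposition \ref{MAPhenceRF} for the ``only if'' direction) get verified. No gap.
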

\begin{proof} This follows from Corollary \ref{NewTheorem2} and Corollary \ref{QuotientCenter}.
\end{proof}


\section{HNN-extensions}

\subsection{HNN-extensions with central identical associated subgroups}
Here we will give a full characterization of when HNN-extensions of the form $\langle G, t\;|\; t^{-1}Ct = C, \; id\rangle$, with $G$ being finitely generated amenable and $C$ being central and finitely generated, are RFD.

Let $\pi: A\to \mathcal B(H)$ be a representation of $A$, $U\in \mathcal B(H)$ a unitary such that $U^{-1}\pi(b)U = \pi(\phi(b))$, for any $b\in B$. The corresponding representation of the HNN-extension $\langle A, t\;|\; t^{-1}Bt = D, \; \phi\rangle$  will be  denoted by $\sigma_{\pi, U}$.

\begin{lemma}\label{HNNlimit} Let $A$ be a $C^*$-algebra, $B$ and $D$ its C*-subalgebras, and $\phi: B\to D$ an isomorphism. Let $\pi: A\to \mathcal B(H)$ be a representation of $A$ and let $P_n\in \mathcal B(H)$ be projections such that $SOT\!-\!\lim P_n = 1$. Let  $\pi_n: A \to B(P_nH)$ be  representations of $A$ such that $\pi= SOT-\lim \pi_n$ and let  $V\in \mathcal B(H)$ and $V_n\in B(P_nH)$ be unitaries such that $V= \ast\!-\!SOT\!-\!\lim V_n$,  $V^{-1}\pi(b)V = \pi(\phi(b))$, $V_n^{-1}\pi_n(b)V_n = \pi_n(\phi(b))$, for any $b\in B$, $n\in \mathbb N$. Let $x$  be an element of the HNN-extension $\langle A, t\;|\; t^{-1}Bt = D, \; \phi\rangle$ such that $\sigma_{\pi, V}(x)\neq 0$. Then there is $n\in \mathbb N$ such that   $\sigma_{\pi_n, V_n}(x)\neq 0$.
\end{lemma}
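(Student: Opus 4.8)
The plan is to mirror, almost verbatim, the proof of Lemma~\ref{SOT-limit}, replacing the monomials of an amalgamated free product by the monomials of the HNN-extension. Recall that $\langle A, t\;|\; t^{-1}Bt = D, \; \phi\rangle$ is the closed linear span of \emph{monomials}, by which I mean finite products of elements of $i_A(A)\cup\{t,t^{\ast}\}$, say of the form $i_A(a_0)t^{\varepsilon_1}i_A(a_1)t^{\varepsilon_2}\cdots t^{\varepsilon_k}i_A(a_k)$ with $a_j\in A$, $\varepsilon_j\in\mathbb Z$. Under $\sigma_{\pi,V}$ we have $i_A(a)\mapsto\pi(a)$, $t\mapsto V$, $t^{\ast}\mapsto V^{\ast}$, and likewise under $\sigma_{\pi_n,V_n}$ with $\pi_n$, $V_n$ in place of $\pi$, $V$; these representations of the HNN-extension are well-defined precisely because of the conjugation relations $V^{-1}\pi(b)V=\pi(\phi(b))$ and $V_n^{-1}\pi_n(b)V_n=\pi_n(\phi(b))$ assumed in the hypotheses. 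All operators in sight are regarded as acting on the single space $H$, using the paper's convention for $\mathrm{SOT}$-limits of representations on subspaces $P_nH\subseteq H$.

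The heart of the matter is the claim: for every monomial $y$, every $\xi\in H$ and every $\varepsilon>0$ there is $n\in\mathbb N$ with $\|\sigma_{\pi_n,V_n}(y)\xi-\sigma_{\pi,V}(y)\xi\|\le\varepsilon$. I would prove this by induction on the length of $y$. The length-$0$ and length-$1$ cases are immediate from the hypotheses: $\pi=\mathrm{SOT}\text{-}\lim\pi_n$ handles $y=i_A(a)$, while $V=\ast\text{-}\mathrm{SOT}\text{-}\lim V_n$ gives both $V_n\xi\to V\xi$ and $V_n^{\ast}\xi\to V^{\ast}\xi$, handling $y=t$ and $y=t^{\ast}$. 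For the inductive step write a length-$(k+1)$ monomial as $y=y_1z$ with $y_1$ of length $k$ and $z\in\{i_A(a),t,t^{\ast}\}$, insert the cross term $\sigma_{\pi_n,V_n}(y_1)\sigma_{\pi,V}(z)\xi$, and bound
$$\|\sigma_{\pi_n,V_n}(y)\xi-\sigma_{\pi,V}(y)\xi\|\le\|\sigma_{\pi_n,V_n}(y_1)\|\,\|(\sigma_{\pi_n,V_n}(z)-\sigma_{\pi,V}(z))\xi\|+\|(\sigma_{\pi_n,V_n}(y_1)-\sigma_{\pi,V}(y_1))\sigma_{\pi,V}(z)\xi\|.$$
The first summand tends to $0$ by the base case applied to $z$ and $\xi$, since $\|\sigma_{\pi_n,V_n}(y_1)\|\le\|y_1\|$ (the norm in the HNN C*-algebra, a constant independent of $n$); the second tends to $0$ by the inductive hypothesis applied to $y_1$ and the fixed vector $\sigma_{\pi,V}(z)\xi$. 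By linearity the claim passes to finite linear combinations of monomials.

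Granting the claim, I would finish exactly as in Lemma~\ref{SOT-limit}: pick a unit vector $\xi$ with $\|\sigma_{\pi,V}(x)\xi\|\ge\|\sigma_{\pi,V}(x)\|/2$ and a finite linear combination of monomials $y$ with $\|x-y\|\le\|\sigma_{\pi,V}(x)\|/8$; apply the claim to obtain $n$ with $\|\sigma_{\pi_n,V_n}(y)\xi-\sigma_{\pi,V}(y)\xi\|\le\|\sigma_{\pi,V}(x)\|/8$; then three applications of the triangle inequality, together with $\|\sigma_{\pi_n,V_n}\|\le1$ and $\|\sigma_{\pi,V}\|\le1$, give $\|\sigma_{\pi_n,V_n}(x)\|\ge\|\sigma_{\pi,V}(x)\|/8>0$, as required. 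I do not expect any serious obstacle here: the only point needing care is the bookkeeping in the inductive step — keeping the coefficients $\|\sigma_{\pi_n,V_n}(y_1)\|$ uniformly bounded in $n$ and consistently treating $\pi_n$, $V_n$ as (degenerate, respectively partially isometric) operators on $H$ — and everything else is a transcription of the amalgamated-free-product argument with $\{t,t^{\ast}\}$ playing the role of the second factor.
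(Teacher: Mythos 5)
Your argument is exactly what the paper intends: its proof of Lemma~\ref{HNNlimit} simply states that one shows $\sigma_{\pi,V}=SOT\text{-}\lim\sigma_{\pi_n,V_n}$ "in the same way as in Lemma~\ref{SOT-limit}", and your induction on monomial length (with $t,t^{\ast}$ handled by the $\ast$-SOT convergence of $V_n$ and the uniform contractivity of the $\sigma_{\pi_n,V_n}$) together with the concluding approximation of $x$ by a combination of monomials is precisely that transcription. So the proposal is correct and follows essentially the same route as the paper.
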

\begin{proof} In the same way as in Lemma \ref{SOT-limit} one shows that $\sigma_{\pi, V}=\!SOT\!-\!\lim \sigma_{\pi_n, V_n}$ and the statement follows.
\end{proof}

We will need one more lemma, which is essentially contained in [\cite{DonRFD}, Lemma
1], where it is formulated in slightly different terms (see also [\cite{KristinTanya}, Lemma 2.5] for the proof).

\begin{lemma}[Hadwin \cite{DonRFD}]\label{DonUnitary} Let $U: H\to H$ be unitary, $H_n\subseteq H$ are subspaces such that $SOT\!-\!\lim P_{H_n} = 1_H$. Then there are  unitaries $U_n: H_n \to H_n$ such that $U = \ast\!-\!SOT\!-\!\lim U_n.$
\end{lemma}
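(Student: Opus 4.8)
The statement to prove: given a unitary $U: H\to H$ and subspaces $H_n\subseteq H$ with $SOT\!-\!\lim P_{H_n} = 1_H$, there are unitaries $U_n: H_n\to H_n$ such that $U = \ast\!-\!SOT\!-\!\lim U_n$.

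The plan is to approximate $U$ restricted to $H_n$ by genuine unitaries of $H_n$, using the fact that $P_{H_n}UP_{H_n}|_{H_n}$ is ``nearly unitary'' when $P_{H_n}$ is close to $1$. Concretely, set $T_n = P_{H_n} U|_{H_n}: H_n \to H_n$. Since $P_{H_n}\to 1_H$ in SOT and $P_{H_n^\perp}\to 0$ in SOT, for any fixed vector $\xi\in H$ we have $\|(U - P_{H_n}U P_{H_n})\xi\|\to 0$ and similarly $\|(U^* - P_{H_n}U^* P_{H_n})\xi\|\to 0$; hence $T_n \to U$ and $T_n^* \to U^*$ in the strong operator topology (interpreting $T_n$ as acting on all of $H$ by extending by $0$ on $H_n^\perp$). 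Thus $T_n^*T_n \to 1$ and $T_nT_n^*\to 1$ in SOT, so $T_n$ is an ``asymptotic unitary''; the task is to upgrade each $T_n$ to an actual unitary $U_n$ of $H_n$ without destroying this SOT-convergence.

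The key step is the polar decomposition argument. Write $T_n = U_n' |T_n|$ where $|T_n| = (T_n^*T_n)^{1/2}$ and $U_n'$ is the partial isometry from $\overline{\mathrm{Ran}}\,|T_n| = (\ker T_n)^\perp$ onto $\overline{\mathrm{Ran}}\,T_n$. Since $|T_n|\to 1$ in SOT and $|T_n|\ge 0$, the subspace $\ker T_n = \ker |T_n|$ shrinks, and one can extend $U_n'$ to a full unitary $U_n: H_n\to H_n$ (possible because $H$, hence generically $H_n$, is separable/infinite-dimensional, and the deficiency spaces $\ker T_n$ and $(\overline{\mathrm{Ran}}\,T_n)^\perp$ have the same dimension as subspaces of $H_n$; if $H_n$ is finite-dimensional the deficiency spaces automatically have equal dimension). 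The main obstacle, and the technical heart, is to check that this extension can be done so that $U_n \to U$ in SOT: the discrepancy $U_n - T_n = U_n'(1 - |T_n|) + (U_n - U_n')$, and the first term goes to $0$ strongly since $|T_n|\to 1$, while the second term is supported on $\ker T_n$, whose projection tends to $0$ strongly because $\|(1-P_{\ker T_n})\xi - \xi\| \to 0$ follows from $\||T_n|\xi\|\to\|\xi\|$. Hence $U_n\to U$ and symmetrically, using the polar decomposition from the other side or applying the same reasoning to $T_n^* = |T_n^*| U_n'^*$, $U_n^*\to U^*$ strongly; that is, $U = \ast\!-\!SOT\!-\!\lim U_n$.

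I expect the delicate point to be the bookkeeping that $P_{\ker T_n}\to 0$ in SOT and that the unitary extension $U_n$ off $\overline{\mathrm{Ran}}|T_n|$ does not reintroduce an obstruction to $\ast$-strong convergence; everything else (the SOT-convergence $T_n\to U$, $T_n^*\to U^*$, and the continuity of $t\mapsto t^{1/2}$ for the convergence $|T_n|\to 1$) is routine. Since the paper only needs this as a black box, citing \cite{DonRFD} and \cite{KristinTanya} for the full argument is appropriate, but the proof above is the one I would write out.
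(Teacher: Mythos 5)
Your reduction to the operators $T_n = P_{H_n}U|_{H_n}$ and the convergence bookkeeping ($T_n\to U$, $T_n^*\to U^*$ strongly, $|T_n|\to 1$, $P_{\ker T_n}\to 0$ in SOT) are fine, but the step on which everything hinges --- that the polar partial isometry $U_n'$ of $T_n$ extends to a unitary of $H_n$ because ``the deficiency spaces $\ker T_n$ and $(\overline{\mathrm{Ran}}\,T_n)^\perp$ have the same dimension'' --- is false in general, and it can fail for \emph{every} $n$. Concretely, take $H=\ell^2(\mathbb Z)$, $U$ the bilateral shift $Ue_k=e_{k+1}$, and $H_n=\overline{\mathrm{span}}\{e_k : k\ge -n\}$; then $P_{H_n}\to 1_H$ in SOT, but $T_n=P_{H_n}U|_{H_n}$ is exactly the unilateral shift on $H_n$: it is an isometry with trivial kernel whose range misses $e_{-n}$, so its polar partial isometry is $T_n$ itself and admits no unitary extension on $H_n$. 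Thus your construction produces no $U_n$ at all in this situation, even though the lemma is true there (one must choose unitaries unrelated to the polar decomposition of $T_n$, e.g.\ by rerouting the missing vector through a ``far away'' infinite-dimensional piece, and making that compatible with $\ast$-SOT convergence for all vectors simultaneously is an extra argument you have not supplied). Your parenthetical remark is correct when the $H_n$ are finite-dimensional, where the deficiency indices automatically agree; that special case is in fact all the paper uses (in Theorem \ref{HNNRFD} the subspaces $K_n$ are finite-dimensional), but it does not prove the lemma as stated.

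The proof in the cited sources avoids this issue entirely by never passing through a polar decomposition: write $U=e^{iA}$ with $A=A^*$ bounded (Borel functional calculus), and set $U_n=e^{iP_{H_n}AP_{H_n}}|_{H_n}$, which is genuinely unitary on $H_n$ by construction. Since $P_{H_n}AP_{H_n}\to A$ in SOT and the sequence is norm-bounded, powers and hence the exponential series converge in SOT, giving $U_n\to U$ and $U_n^*\to U^*$ strongly (in the extend-by-zero sense, using $P_{H_n}\to 1$). If you want a self-contained argument rather than the citation, that is the route to take; alternatively, state and prove your version only for finite-dimensional $H_n$, which suffices for the application but is a strictly weaker statement than the lemma.
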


\begin{theorem}\label{HNNRFD} Let $G$ be an amenable RF group and let $C$ be its finitely generated central subgroup such that there exists a $C$-filtration of $G$. Then  the HNN-extension $\langle G, t\;|\; t^{-1}Ct = C, \; id\rangle$ is RFD.
\end{theorem}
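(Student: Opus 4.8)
The plan is to mimic closely the proof of Theorem \ref{main}, replacing amalgamated free products by HNN-extensions and using the HNN-analogues of the auxiliary lemmas (Lemma \ref{HNNlimit} in place of Lemma \ref{SOT-limit}, and Lemma \ref{DonUnitary} to produce the conjugating unitaries on finite-dimensional subspaces). Concretely, fix $0\neq x$ in the HNN-extension $H\!:=\!\langle G,t\mid t^{-1}Ct=C,\;id\rangle = \langle C^*(G),t\mid t^{-1}C^*(C)t=C^*(C),\;id\rangle$ and pick an irreducible representation $\pi$ of $H$ on an infinite-dimensional separable space $K$ with $\pi(x)\neq 0$. Since $C$ is central in $G$ and $t^{-1}ct=c$ for $c\in C$, the element $C^*(C)$ is central in $H$, so $\pi|_{C^*(C)}$ is given by a single $1$-dimensional representation $\lambda$ of $C$, i.e. $\pi(c)=\lambda(c)1$. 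Form $\tilde\lambda$ on $G$ as in Corollary \ref{CorollaryTracesInducedFromCenter} and let $\Lambda_{\tilde\lambda}$ be its GNS-representation of $C^*(G)$ on $H_{\tilde\lambda}$.

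The first main step is the analogue of Claim 1 in Theorem \ref{main}. By Corollary \ref{kernel}(i) (applicable since $C$ is normal and $G/C$ is a quotient of an amenable group, hence amenable, and $\pi|_{C^*(G)}$ restricts to $\lambda 1$ on $C$), $\ker\Lambda_{\tilde\lambda}\subseteq\ker(\pi|_{C^*(G)})$, so $\pi|_{C^*(G)}\oplus(\Lambda_{\tilde\lambda})^{(\infty)}$ and $(\Lambda_{\tilde\lambda})^{(\infty)}$ have equal ranks on every element and hence, by Voiculescu's theorem, are approximately unitarily equivalent; combined with Corollary \ref{Don} this shows $\pi|_{C^*(G)}$ is a $\ast$-SOT limit of unitary conjugates $U^*(\Lambda_{\tilde\lambda})^{(\infty)}U$. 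By Lemma \ref{scalar} each such conjugate still sends $c\in C$ to $\lambda(c)1$. Now we also need to carry the unitary $t$: given $U^*(\Lambda_{\tilde\lambda})^{(\infty)}U$ approximating $\pi|_{C^*(G)}$, the operator $U^*$ transports $\pi(t)$ to a unitary $W$ on $H_{\tilde\lambda}^{(\infty)}$; since $\Lambda_{\tilde\lambda}$ sends $C$ to scalars, the relation $W^{-1}\,\cdot\,W = \mathrm{id}$ on $C^*(C)$ holds automatically, so $\sigma_{U^*(\Lambda_{\tilde\lambda})^{(\infty)}U,\;W}$ is a genuine HNN-representation, unitarily equivalent to $\pi$ via $U^{(\infty)}$-adjustments; passing to an appropriate approximant and applying Lemma \ref{HNNlimit} we may assume $\sigma_{U^*(\Lambda_{\tilde\lambda})^{(\infty)}U,\,W}(x)\neq 0$.

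The second main step is the finite-dimensional approximation. By Corollary \ref{CorollaryTracesInducedFromCenter} there are finite-dimensional representations $\pi_n$ of $G$ with $\pi_n(c)$ scalar for $c\in C$ and $\tilde\lambda(a)=\lim \operatorname{tr}\pi_n(a)$ for $a\in C^*(G)$. By Lemma \ref{ExelLoring} there are coisometries $V_n:H_{\tilde\lambda}\to H_{\operatorname{tr}\pi_n}$ with $\rho_n:=V_n^*\Lambda_{\operatorname{tr}\pi_n}V_n$ satisfying $\Lambda_{\tilde\lambda}=SOT\!-\!\lim\rho_n$; by Lemma \ref{GNSfin-dim} the $\Lambda_{\operatorname{tr}\pi_n}$ are finite-dimensional, and by Lemma \ref{scalar} they send $c\in C$ to scalars. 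Passing to infinite multiples via Lemma \ref{infty}, then conjugating by $U$ as in Theorem \ref{main}, we get finite-dimensional subspaces $\tilde K_n\subseteq K$, projections $P_n=P_{\tilde K_n}$ with $SOT\!-\!\lim P_n=1$, and finite-dimensional representations $\bar\rho_n$ of $C^*(G)$ on $\tilde K_n$ with $\pi|_{C^*(G)}=SOT\!-\!\lim\bar\rho_n$ and $\bar\rho_n(c)$ scalar. The new ingredient is that we must also produce the conjugating unitaries: apply Lemma \ref{DonUnitary} to $V:=\pi(t)$ and the subspaces $\tilde K_n$ to obtain unitaries $V_n:\tilde K_n\to\tilde K_n$ with $V=\ast\!-\!SOT\!-\!\lim V_n$. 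Because $\bar\rho_n$ sends $C$ to scalars, $V_n^{-1}\bar\rho_n(c)V_n=\bar\rho_n(c)$ holds automatically, so $\sigma_{\bar\rho_n,V_n}$ is a finite-dimensional representation of the HNN-extension $H$. Finally, Lemma \ref{HNNlimit} yields some $n$ with $\sigma_{\bar\rho_n,V_n}(x)\neq 0$, proving $C^*(H)$ is RFD.

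I expect the main obstacle to be the bookkeeping in the first step, namely simultaneously arranging that (a) $\pi|_{C^*(G)}$ is a $\ast$-SOT limit of unitary conjugates of $(\Lambda_{\tilde\lambda})^{(\infty)}$ and (b) the unitary $\pi(t)$ is correctly transported so that the resulting HNN-representations are honest and still detect $x$ — exactly the role played by Claim 1 plus Lemma \ref{SOT-limit} in Theorem \ref{main}, but now one must also track $t$. The point that makes it go through cleanly is that the associated subgroup $C$ is central and the identifying isomorphism is the identity, so the HNN relation restricted to $C^*(C)$ is trivially preserved by any unitary once the representation is scalar on $C^*(C)$; thus no compatibility of $t$ with the amalgam has to be engineered, unlike in situations with a nontrivial associated isomorphism. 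The rest is routine adaptation of the machinery already assembled in Sections 4--6.
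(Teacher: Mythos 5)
Your proposal is correct and follows essentially the same route as the paper's proof: the same Claim obtained from Corollary \ref{kernel}, Voiculescu's theorem, Corollary \ref{Don}, Lemma \ref{scalar} and Lemma \ref{HNNlimit}, then finite-dimensional approximants from Corollary \ref{CorollaryTracesInducedFromCenter}, Lemma \ref{ExelLoring}, Lemma \ref{GNSfin-dim}, Lemma \ref{infty}, unitaries from Lemma \ref{DonUnitary}, and Lemma \ref{HNNlimit} again to finish, with centrality of $C$ making compatibility with $t$ automatic. The only slip is notational: your finite-dimensional representations converge SOT to $U^*\Lambda_{\tilde\lambda}^{(\infty)}U$ (not to $\pi|_{C^*(G)}$), which is exactly what is needed since the Claim makes $\sigma_{U^*\Lambda_{\tilde\lambda}^{(\infty)}U,\;\pi(t)}$ nonzero on $x$.
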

\begin{proof} Let $0\neq x\in C^*\langle G, t\;|\; t^{-1}Ct = C, \; id\rangle$. There is an irreducible representation $\pi$ of $C^*\langle G, t\;|\; t^{-1}Ct = C, \; id\rangle$ on a Hilbert space $K$ such that $\pi(x)\neq 0.$ Since $C^*(C)$ is a central $C^*$-subalgebra of $C^*\langle G, t\;|\; t^{-1}Ct = C, \; id\rangle$, there is a 1-dimensional representation $\lambda$ of $C^*(C)$ such that
$\pi(a) = \lambda(a) 1$, for any $a\in C^*(C)$.  Let
 $$\tilde\lambda (g)= \begin{cases} \lambda(g),\; g\in C \\ 0,\; g\notin  G\end{cases}.$$
 Let $\Lambda_{\tilde\lambda}$ be the GNS-representation of $C^*(G)$ with respect to the trace $\tilde\lambda$. The corresponding Hilbert spaces  will be denoted by $H$.

{\bf Claim:} There exists a unitary $U: K\to H^{(\infty)}$ such that
$$[U^* \Lambda_{\tilde\lambda}^{(\infty)}(c) U, \pi(t)]=0, $$ for any $c\in C$, and
$$\sigma_{U^* \Lambda_{\tilde\lambda}^{(\infty)} U, \pi(t)}(x)\neq 0.$$

{\it Proof of Claim:}
 By Corollary \ref{kernel} $\ker \Lambda_{\tilde\lambda} \subseteq \ker \pi|_{C^*(G)} $. Hence $$ rank \left(\pi|_{C^*(G)}\oplus \left(\Lambda_{\tilde\lambda}\right)^{\infty}\right)(a) = rank \left(\Lambda_{\tilde\lambda}\right)^{\infty}(a),$$ for any $a\in C^*(G)$. By Vociulescu's theorem the representations $\pi|_{C^*(G)}\oplus \left(\Lambda_{\tilde\lambda}\right)^{\infty}$ and $\left(\Lambda_{\tilde\lambda}\right)^{\infty}$ are approximately unitarily equivalent. On the other hand, by Corollary \ref{Don} the representation $\pi|_{C^*(G)}$ is the $\ast$-strong limit of unitary conjugates of $\pi|_{C^*(G)}\oplus \left(\Lambda_{\tilde\lambda}\right)^{\infty}$. All together this says us that $\pi|_{C^*(G)}$ is the pointwise $\ast$-strong limit of unitary conjugates of $\left(\Lambda_{\tilde\lambda}\right)^{\infty}$. We notice also that  by Lemma \ref{scalar} $\left(\Lambda_{\tilde\lambda}\right)^{(\infty)}(a) = \lambda(a)1$, for any $a\in C^*(C)$, and hence the same equality is true for unitary conjugates of $\left(\Lambda_{\tilde\lambda}\right)^{(\infty)}$. Hence the restriction of any unitary conjugate of $\Lambda_{\tilde\lambda}^{(\infty)}$ onto $C^*(C)$ commutes with $\pi(t)$. By Lemma \ref{HNNlimit} we conclude that there exists a unitary $U: K\to H^{(\infty)}$ such that
$$\sigma_{U^* \Lambda_{\tilde\lambda}^{(\infty)} U, \pi(t)}(x)\neq 0.$$
Claim is proved.

By Corollary \ref{CorollaryTracesInducedFromCenter}  there exist finite-dimensional representations $\pi_n $ of $G$ such that
\begin{equation}
\tilde\lambda(g) = \lim_{n\to \infty} tr \pi_n(g),
\end{equation}
for any $g\in G$, and  $\pi_n(c) $ is a scalar matrix, for any $c\in C$.  By Lemma \ref{ExelLoring} there exist coisometries $V_n: H \to H_{tr_{\pi_n}}$ such that for the representations $\rho_n = {V_n}^*\Lambda_{tr {\pi_n}} V_n$ we have
$$\Lambda_{\tilde\lambda}(a) = SOT\!-\!\lim \rho_n(a),$$ for any $a\in C^*(G)$.
By Lemma \ref{GNSfin-dim}, $\Lambda_{tr {\pi_n}}$,  and hence $\rho_n$, are finite-dimensional representations,  and by Lemma \ref{scalar}, $\Lambda_{tr{\pi_n}}(c)$,  and hence $\rho_n(c)$,   are scalar matrices. By Lemma \ref{infty} there are finite multiples of $\rho_n$, let us call them $\tilde \rho_n$, such that
\begin{equation}\label{uparrowHNN}
{\Lambda_{\tilde\lambda}}^{(\infty)}(a) = SOT\!-\!\lim \tilde\rho_n(a),
\end{equation}
 for any $a\in C^*(G)$, and we still have  that for each $c\in C$,  $\tilde\rho_n(c)$  are scalar matrices. Denote by $H_n\subset H^{(\infty)}$ the essential spaces of the representations $\tilde \rho_n$. It follows from (\ref{uparrowHNN}) that
 \begin{equation}\label{uparrow} SOT\!-\!\lim P_{H_n} =  1_{H^{(\infty)}}.\end{equation}
  We define  $K_n \subset K$  by $$K_n = U^{-1}(H_n),$$ where $U$ is the unitary form the claim,  and we define unitaries  $U_{n}: K_n\to H_n$ by $$U_n = U|_{K_n}.$$ Then $U = \ast\!-\!SOT\!-\!\lim U_n$ and  also
 $$U^*\Lambda_{\tilde\lambda}^{(\infty)}(a)U = SOT\!-\!\lim U_{n}^*\tilde\rho_n(a)U_{n},$$ for any $a\in C^*(G)$.  By (\ref{uparrow}) and Lemma \ref{DonUnitary} there are unitaries $W_n\in B(K_n)$ such that  $\pi(t) = \ast\!-\!SOT\!-\!\lim W_n$. We also notice that for each $c\in C$,  $U_{n}^*\tilde\rho_n(c)U_{n}$  are scalar matrices and hence commute with $W_n$.
By Claim and Lemma \ref{HNNlimit} there is $n\in \mathbb N$ such that the finite-dimensional representation $\sigma_{U_{n}^*\tilde\rho_nU_{n}, W_n}$ does not vanish on $x$.
\end{proof}

\begin{corollary}\label{HNNCorollaryMainCases} Let $G$ be a finitely generated amenable group and let $C$ be its finitely generated central subgroup. Then the following are equivalent:

(i) the HNN-extension $\langle G, t\;|\; t^{-1}Ct = C, \; id\rangle$ is RF;

(ii) there exists a $C$-filtration of $G$;

(iii) the HNN-extension $\langle G, t\;|\; t^{-1}Ct = C, \; id\rangle$ is RFD.
\end{corollary}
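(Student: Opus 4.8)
The plan is to prove the cycle of implications (iii) $\Rightarrow$ (i) $\Rightarrow$ (ii) $\Rightarrow$ (iii). Throughout, write $\Gamma$ for the HNN-extension $\langle G, t\mid t^{-1}Ct=C,\ id\rangle$.

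First I would dispatch the two easy arrows. For (iii) $\Rightarrow$ (i): an RFD group is MAP, and $\Gamma$ is finitely generated (a finite generating set of $G$ together with $t$ generates it), so Proposition~\ref{MAPhenceRF} gives that $\Gamma$ is RF. For (ii) $\Rightarrow$ (iii): the existence of a $C$-filtration of $G$ forces in particular the intersection of its members to be trivial, so $G$ is RF; since moreover $G$ is amenable and $C$ is finitely generated and central, Theorem~\ref{HNNRFD} applies verbatim and yields that $\Gamma$ is RFD.

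The real content is (i) $\Rightarrow$ (ii). Since $G$ embeds into $\Gamma$ and subgroups of RF groups are RF, $G$ itself is RF. Because $C$ is central, Corollary~\ref{QuotientRF} reduces the task to showing that $G/C$ is RF, which by Proposition~\ref{QuotientRF0} is exactly the statement $\bigcap_{N\triangleleft_f G}CN=C$. So I would fix $g\in G\setminus C$ and seek $N\triangleleft_f G$ with $g\notin CN$. The key is that in $\Gamma$ the stable letter $t$ commutes with all of $C$ but with no element of $G\setminus C$: by the normal form theorem for HNN-extensions (Britton's Lemma), for $g\notin C$ the word $t^{-1}gt$ is reduced and genuinely involves the letter $t$, hence cannot equal the element $g\in G$, so $tg\neq gt$, i.e. $[t,g]\neq e$. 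Using residual finiteness of $\Gamma$, choose a homomorphism $\psi\colon\Gamma\to F$ onto a finite group with $\psi([t,g])\neq e$, so that $\psi(g)$ lies outside the centralizer of $\psi(t)$ in $F$, and set $N=\ker(\psi|_G)\triangleleft_f G$. Since $t$ centralizes $C$, we have $\psi(C)$ inside that centralizer; therefore an identity $g=cn$ with $c\in C$, $n\in N$ would give $\psi(g)=\psi(c)$, a contradiction. Hence $g\notin CN$ as required, and a $C$-filtration of $G$ then exists by Corollary~\ref{QuotientRF}.

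I expect the implication (i) $\Rightarrow$ (ii) to be the main obstacle: one has to convert residual finiteness of the whole HNN-extension into a statement purely about finite quotients of $G$ that separate $g$ from the subgroup $C$, and the only evident handle is the commutator $[t,g]$, whose non-triviality rests on the normal form / Britton's Lemma for HNN-extensions (equivalently, on the fact that the centralizer of $t$ in $\Gamma$ meets $G$ in exactly $C$). Everything else is bookkeeping assembled from Proposition~\ref{MAPhenceRF}, Proposition~\ref{QuotientRF0}, Corollary~\ref{QuotientRF}, and Theorem~\ref{HNNRFD}.
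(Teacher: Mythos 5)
Your proposal is correct, and its overall architecture --- the cycle (iii) $\Rightarrow$ (i) $\Rightarrow$ (ii) $\Rightarrow$ (iii), with (iii) $\Rightarrow$ (i) via MAP plus Proposition \ref{MAPhenceRF} and (ii) $\Rightarrow$ (iii) via Theorem \ref{HNNRFD} --- is exactly the paper's. The one place where you genuinely diverge is (i) $\Rightarrow$ (ii): the paper simply cites Wong's theorem ([Wong, Th.\ 2.3]) for the case $C\neq G$ (and treats $C=G$ as a trivial separate case), whereas you reprove the needed direction from scratch. Your argument --- $G$ is RF as a subgroup of the RF group $\Gamma$; for $g\in G\setminus C$ Britton's Lemma gives $[t,g]\neq e$ while $t$ centralizes $C$ by the defining relation, so a finite quotient $\psi$ of $\Gamma$ separating $[t,g]$ from $e$ yields $N=\ker(\psi|_G)\triangleleft_f G$ with $g\notin CN$; hence $G/C$ is RF by Proposition \ref{QuotientRF0} and a $C$-filtration exists by Corollary \ref{QuotientRF} --- is sound (the only candidate pinch in $t^{-1}g t g^{-1}$ is $t^{-1}gt$, excluded since $g\notin C$). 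What this buys is a self-contained, elementary proof that also handles $C=G$ uniformly, at the cost of invoking Britton's Lemma explicitly; what the paper's route buys is brevity, delegating precisely this separability criterion to the literature. Note also that Wong's statement is an equivalence, but, like you, the paper only needs the direction RF $\Rightarrow$ $C$-separability here, since the converse is recovered through (ii) $\Rightarrow$ (iii) $\Rightarrow$ (i).
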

\begin{proof} (i) $\Rightarrow$ (ii): In  the case $C\neq G$ (i) $\Leftrightarrow$ (ii) by [\cite{Wong}, Th. 2.3]. The case $C=G$ is obvious. Indeed (i) implies that $G$ is RF, and then a $G$-filtration of $G$ obviously exists.

(ii) $\Rightarrow$ (iii):
This implication is proved in Theorem \ref{HNNRFD}.

(iii) $\Rightarrow$ (i): (iii) implies that $\langle G, t\;|\; t^{-1}Ct = C, \; id\rangle$ is MAP. Since it is finitely-generated, it is RF by Proposition \ref{MAPhenceRF}.
\end{proof}

\subsection{From HNN-extensions to amalgamated free products}

Here amalgams need not be central.

It is well-known that many results on residual finiteness of amalgamated free products of groups can be obtained from results on HNN-extensions by embedding an amalgamated free product of $A$ and $B$ into a certain HNN-extension of $A\ast B$ (see \cite{survey}). Below we show that for the RFD-property there is a different connection between amalgamated free products and HNN-extensions which arises by "purely operator-algebraic" reasons.


\begin{theorem}\label{FromHNNtoAmalgam} Let $A$ be a separable C*-algebra (or a group) and $B$ its C*-subalgebra (a subgroup, respectively). If the HNN-extension $\langle A, t\;|\; t^{-1}Bt = B, \; id\rangle$
is RFD, then the amalgamated free product $A\ast_{B=B} A$ is RFD.
\end{theorem}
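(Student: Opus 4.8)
The plan is to exhibit $A\ast_{B=B}A$ as a C*-subalgebra of the HNN-extension $\mathcal{H}=\langle A, t\;|\; t^{-1}Bt=B,\;id\rangle$, so that RFD passes to the subalgebra for free (a C*-subalgebra of an RFD C*-algebra is RFD, since finite-dimensional representations restrict). Concretely, inside $\mathcal{H}$ we have the copy $A$ together with the unitary $t$ which, by the defining relation, commutes with $B$; hence the inner automorphism $\mathrm{Ad}(t)$ restricts to the identity on $B$. Therefore $t^{-1}At$ is a second copy of $A$ inside $\mathcal{H}$ which agrees with the first copy on $B$: the element $b\in B\subset A$ maps to $t^{-1}bt=b$. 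By the universal property of the amalgamated free product there is a $\ast$-homomorphism
\[
\Psi\colon A\ast_{B=B}A\longrightarrow \mathcal{H}
\]
sending the first copy of $A$ identically into $A\subset\mathcal{H}$ and the second copy of $A$ onto $t^{-1}At$ via $a\mapsto t^{-1}at$. (In the group case one takes the corresponding homomorphism on the group level and then on full group C*-algebras, using $C^*(A\ast_B A)=C^*(A)\ast_{C^*(B)}C^*(A)$ and $C^*\langle A,t\mid t^{-1}Bt=B,id\rangle=\langle C^*(A),t\mid t^{-1}C^*(B)t=C^*(B),id\rangle$ from the preliminaries.)

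First I would verify that this $\Psi$ is injective. One cannot invoke injectivity of $i_A$ alone, since the two copies of $A$ sit inside the same $\mathcal{H}$; instead I would use the canonical conditional-expectation / Fell-bundle structure of the HNN-extension. Recall $\mathcal{H}$ carries a faithful conditional expectation onto $A$ coming from the $\mathbb{Z}$-grading by the $t$-exponent, and reduced words $a_0 t^{\varepsilon_1} a_1 t^{\varepsilon_2}\cdots$ (with the usual reduction rules for $B$) span a dense subspace; likewise $A\ast_{B=B}A$ has its own canonical expectation onto $A$ and a reduced-word description. The map $\Psi$ sends an alternating word $a_1 a_1' a_2 a_2'\cdots$ (with $a_i$ in the first copy, $a_i'$ in the second, not in $B$ modulo $B$) to $a_1 t^{-1}a_1' t\, a_2 t^{-1}a_2' t\cdots$; conjugating and collecting the $t$'s one sees this is, up to a reduction controlled precisely by the relation $t^{-1}bt=b$, a reduced word in $\mathcal{H}$ of $t$-degree zero, nonzero whenever the original word was nonzero. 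I would make this rigorous by checking that $\Psi$ intertwines the two conditional expectations onto $A$ and is isometric on the $A$-bimodule generated by each reduced word, from which faithfulness of the expectations gives injectivity of $\Psi$ on the dense $\ast$-subalgebra, hence on all of $A\ast_{B=B}A$.

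The main obstacle is exactly this injectivity/word-reduction bookkeeping: one must confirm that the reduction rules in the amalgamated free product (collapsing adjacent letters that land in $B$) correspond under $\Psi$ to genuine reductions in the HNN-extension, and that no extra collapsing occurs on the $\mathcal{H}$ side — i.e.\ that $\Psi$ does not identify distinct reduced words. Once injectivity of $\Psi$ is in hand, the theorem follows immediately: $A\ast_{B=B}A\cong\Psi(A\ast_{B=B}A)$ is a C*-subalgebra of $\mathcal{H}$, and a C*-subalgebra of an RFD C*-algebra is RFD, so the hypothesis that $\langle A,t\mid t^{-1}Bt=B,id\rangle$ is RFD gives that $A\ast_{B=B}A$ is RFD. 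In the group formulation, separability of $A$ is used only to stay within the separable setting where Voiculescu-type tools and the stated structural lemmas apply, but the embedding argument itself is purely algebraic.
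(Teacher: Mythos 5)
Your overall reduction --- realize $A\ast_{B=B}A$ inside $\mathcal H=\langle A,t\mid t^{-1}Bt=B,\;id\rangle$ via $a\mapsto a$ on the first copy and $a\mapsto t^{-1}at$ on the second, and then use that RFD passes to C*-subalgebras --- is a legitimate strategy, and in the group case it does work: Britton's lemma gives injectivity of $G\ast_{C}G\to\langle G,t\mid t^{-1}Ct=C\rangle$ at the group level, and $C^*(H)\to C^*(G)$ is injective for subgroup inclusions. But for general separable C*-algebras the step you yourself flag as the main obstacle --- injectivity of $\Psi$ --- is exactly where the proposal breaks, because the tools you invoke are not available. First, no conditional expectation of $A$ onto $B$ is assumed ($B$ is just a C*-subalgebra), so there is no canonical expectation with which to run a reduced-word/Fell-bundle argument at all. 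Second, even when such an expectation exists, faithfulness of the canonical expectations is a feature of the \emph{reduced} HNN-extension and \emph{reduced} amalgamated free product, whereas the theorem concerns the universal (full) constructions; on the full HNN-extension the gauge circle action only gives an expectation onto the degree-zero part (which is much larger than $A$), and canonical expectations onto the base are not faithful there in general. Third, injectivity of $\Psi$ on the dense algebraic amalgam --- which is all that reduced-word bookkeeping could deliver --- does not imply injectivity of the $\ast$-homomorphism between the C*-completions: one needs, for every nonzero $x$, \emph{some} representation of the HNN-extension whose pullback along $\Psi$ does not vanish on $x$, and word combinatorics alone does not produce such representations.

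This missing step is precisely what the paper supplies, by a representation-theoretic rather than combinatorial argument: take $\pi=\oplus\pi_{\alpha}$, the direct sum of all irreducible representations of $A\ast_{B=B}A$, and let $\rho,\gamma$ be its restrictions to the two copies of $A$. Since swapping the two copies is an automorphism preserving irreducibility, $\rho$ and $\gamma$ are unitarily equivalent, and because they agree on $B$ the intertwining unitary $u$ commutes with $\gamma(B)$; thus $(\gamma,u)$ is a representation of the HNN-extension whose pullback to the amalgam is faithful (this, incidentally, yields the embedding you wanted, but by representation theory, not by word reductions). The paper then concludes not via subalgebra-heredity but via Exel--Loring: finite-dimensional $\ast$-SOT approximants $(\tilde\gamma_n,u_n)$ with $[u_n,\tilde\gamma_n(b)]=0$ give pairs $(\tilde\rho_n,\tilde\gamma_n)=(u_n^{*}\tilde\gamma_n u_n,\tilde\gamma_n)$, i.e.\ finite-dimensional representations of $A\ast_{B=B}A$ converging pointwise $\ast$-strongly to a faithful one, which separate points by Lemma~\ref{SOT-limit}. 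If you wish to keep your subalgebra formulation, you still need an argument of this kind (or some other proof that $\Psi$ is injective for the full C*-constructions); the expectation-based route you sketch cannot provide it.
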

\begin{proof} Let $\{\pi_{\alpha}\}$ be the set of all irreducible representations of $A\ast_{B=B} A$ up to unitary equivalence and let $\pi = \oplus \pi_{\alpha}$. Let $\rho$, $\gamma$, $\rho_{\alpha}$  and $\gamma_{\alpha}$ be the restrictions of $\pi$ and $\pi_{\alpha}$ to the first and second copies of $A$ respectively. In other words, $$\pi_{\alpha} = \sigma_{\rho_{\alpha},\; \gamma_{\alpha}},\; \pi = \sigma_{\rho,  \gamma}.$$ We notice that if for two representations $\tau_1, \tau_2$ of $A$ the representation $\sigma_{\tau_1, \tau_2}$ is irreducible, then so is the representation $\sigma_{\tau_2, \tau_1}$. This implies that each direct summand of $\rho = \oplus \rho_{\alpha}$ is also a direct summand of $\gamma = \oplus \gamma_{\alpha}$.  It follows that $\rho $ and $\gamma $ are unitarily equivalent and thus there is a unitary $u$ such that $$\rho(a) = u^*\gamma(a)u, $$ for any $a\in A$. Since $\rho(b) = \gamma(b),$ for any $b\in B$, we have $$[u, \gamma(b)]=0,$$ for any $b\in B$. Hence the pair $\gamma, u$ defines a representation of the HNN-extension $\langle A, t\;|\; t^{-1}Bt = B, \; id\rangle$ (on the Hilbert space $H$ of the representation $\pi$). Since $\langle A, t\;|\; t^{-1}Bt = B, \; id\rangle$ is RFD, by Exel-Loring [\cite{ExelLoring}, Th. 2.4]  there are representations $\tilde\gamma_n$ of $A$ living on finite-dimensional subspaces $H_n$ of $H$  and unitaries $u_n\in B(H_n)$ such that $u= \ast-SOT-\lim u_n$, $\gamma(a) = \ast-SOT-\lim \tilde\gamma_n(a)$, for each $a\in A$, and $[u_n, \tilde\gamma_n(b)]=0$, for any $b\in B$. Let $\tilde\rho_n = u_n^*\tilde\gamma_n u_n.$ Then $$\rho(a) = \ast-SOT-\lim \tilde\rho_n(a), $$
 for any $a\in A$ and
 $$\tilde\rho_n(b) = \tilde\gamma_n(b),$$
  for any $b\in B$. The latter equality implies that the pair $\tilde\rho_n, \tilde\gamma_n$ defines a representation of $A\ast_{B=B} A$. As $\pi$ is faithful, Lemma \ref{SOT-limit} shows that the representations $\sigma_{\tilde\rho_n, \tilde\gamma_n}$, $n\in \mathbb N$,  form a separating family of finite-dimensional representations of $A\ast_{B=B} A$.
  \end{proof}

\section{Negative results: a question of Khan and Morris and central amalgamation of RFD C*-algebras}

Trying to find a counterexample showing that central amalgams of RFD C*-algebras need not be RFD, we notice that Corollary \ref{CorollaryMainCases} indicates  natural candidates to test for such a counterexample. Namely we should look  at $C^*(G)\ast_{C^*(N)} C^*(G)$, where $G$ is an RF amenable group and $N$ is its finitely generated central subgroup such that $G/N$ is not RF.  Moreover, Corollary \ref{HNNCorollaryMainCases} (together with Proposition \ref{QuotientRF0}) asserts that in this case the corresponding HNN-extension is indeed not RFD.
We therefore will try to pass from HNN-extensions to amalgamated free products. This approach will work for the RF property as well.

\begin{lemma}\label{HNNnotRF} Let $\Gamma$ be a group and $N$ its central subgroup such that $\Gamma/N$ is not RF. Then the HNN-extension $\langle \Gamma, t\;|\; t^{-1}Nt = N, \; id\rangle$ is not RF.
\end{lemma}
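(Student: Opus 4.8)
The plan is to argue by contradiction: assuming the HNN-extension $H := \langle \Gamma, t\;|\; t^{-1}Nt = N, \; id\rangle$ is RF, I will deduce that $\Gamma/N$ is RF, contradicting the hypothesis. The first observation is that $N$ is central not only in $\Gamma$ but in all of $H$: it is central in $\Gamma$ by assumption, and it commutes with the stable letter $t$ because the defining relation of this HNN-extension reads $t^{-1}nt = id(n) = n$ for every $n \in N$. Hence $N \subseteq Z(H)$.

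Next I would identify $Z(H) \cap \Gamma$. It certainly contains $N$. For the reverse inclusion, suppose $x \in \Gamma$ lies in $Z(H)$; then $t^{-1}xt = x \in \Gamma$, and Britton's Lemma for HNN-extensions forces $x$ to belong to the associated subgroup, that is, $x \in N$ (an element $x$ of the base for which $t^{-1}xt$ again lies in the base must lie in the associated subgroup $N$). Therefore $Z(H) \cap \Gamma = N$, and consequently the canonical homomorphism $\Gamma \to H/Z(H)$ has kernel exactly $N$, so $\Gamma/N$ embeds as a subgroup of $H/Z(H)$.

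Finally, if $H$ were RF then $H/Z(H)$ would be RF by Corollary \ref{QuotientCenter}, whence its subgroup $\Gamma/N$ would be RF as well, a contradiction. Therefore $H$ is not RF. The only non-formal ingredient in this argument is Britton's Lemma (equivalently, the normal form theorem for HNN-extensions), used to pin down $Z(H) \cap \Gamma$; everything else is bookkeeping, and this is the step I would be most careful with.

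An alternative, more hands-on route that avoids appealing to Corollary \ref{QuotientCenter} is also available and may fit better into the surrounding discussion: by Proposition \ref{QuotientRF0}, non-residual-finiteness of $\Gamma/N$ produces $g \in \Gamma \setminus N$ with $g \in NK$ for every $K \triangleleft_f \Gamma$; then $w := t^{-1}g^{-1}tg$ is nontrivial in $H$ by Britton's Lemma (since $g \notin N$), while for any homomorphism $f$ from $H$ onto a finite group, writing $g = nk$ with $n \in N$ and $k \in \ker(f|_\Gamma) \triangleleft_f \Gamma$ gives $f(g) = f(n) \in f(N)$ and hence $f(w) = f(t)^{-1}f(n)^{-1}f(t)f(n) = e$, because $f(t)$ commutes with $f(n)$. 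Thus $w$ is a nontrivial element of $H$ killed by every finite quotient, so $H$ is not RF.
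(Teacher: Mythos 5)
Your argument is correct, but it proceeds differently from the paper. The paper's proof is a two-line reduction: by Proposition \ref{QuotientRF0}, non-residual-finiteness of $\Gamma/N$ means $\Gamma$ is not $N$-separable, and then the conclusion is quoted from Wong--Wong [WW05, Th.\ 2.3], a criterion for residual finiteness of HNN-extensions with central associated subgroup. You avoid that external reference in two ways. Your first route identifies $Z(H)\cap\Gamma=N$ for $H=\langle \Gamma, t\;|\; t^{-1}Nt=N,\,id\rangle$ (the inclusion $\subseteq$ correctly pinned down by Britton's Lemma, since $g\in\Gamma$ with $t^{-1}gt\in\Gamma$ forces $g\in N$), embeds $\Gamma/N$ into $H/Z(H)$, and invokes Corollary \ref{QuotientCenter} plus heredity of RF for subgroups; this is clean but leans on the structure of $Z(H)$. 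Your second route is essentially a self-contained proof of the relevant direction of Wong's criterion in this special case: Proposition \ref{QuotientRF0} (applied exactly as in the paper) yields $g\in\Gamma\setminus N$ lying in $NK$ for every $K\triangleleft_f\Gamma$, and the commutator $w=t^{-1}g^{-1}tg$ is nontrivial by Britton's Lemma yet dies in every finite quotient because $f(g)=f(n)\in f(N)$ commutes with $f(t)$. What the paper's citation buys is brevity; what your arguments buy is that the lemma becomes independent of \cite{Wong}, at the cost of explicitly invoking Britton's Lemma (which the paper only uses, in the guise of the Normal Form Theorem, in the subsequent Lemma \ref{HNNembeds}). Note also that your explicit witness $w$ is conceptually the same obstruction that makes the later embedding argument of Lemma \ref{HNNembeds} work, so the second route fits naturally with the rest of Section 8.
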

\begin{proof} Since $\Gamma/N$ is not RF, it follows from Proposition \ref{QuotientRF0} that $\Gamma$ is not $N$-separable. Now by [\cite{Wong}, Th. 2.3] the HNN-extension $\langle \Gamma, t\;|\; t^{-1}Nt = N, \; id\rangle$ is not RF.
\end{proof}

\begin{lemma}\label{HNNembeds} Let $\Gamma$ be a group and $N$ its central subgroup. Then

 (i) if there exists $g_0\in \Gamma$ such that $g_0^k\notin N$, for any $k\in \mathbb Z$, then the  HNN-extension
  $\langle \Gamma, t\;| t^{-1}Nt = N, \; id\rangle$ embeds into the amalgamated free product $\Gamma \ast_{N=N} \Gamma$,

 (ii) if for any $K\in \mathbb Z$ there exists  $g_0\in \Gamma$ such that $g_0^k\notin N$, for any $-|K| \le k\le |K|$, and the HNN-extension $\langle \Gamma, t\;|\; t^{-1}Nt = N, \; id\rangle$ is not RF, then the amalgamated free product $\Gamma \ast_{N=N} \Gamma$ is not RF.
\end{lemma}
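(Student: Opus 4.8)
The plan is to realise the HNN-extension inside the amalgam through the intermediate group $\Gamma\ast_N(N\times\mathbb{Z})$, and for part (ii) to transport a ``bad'' element — one killed by every finite quotient — along a suitable, not necessarily injective, map. Write $h=g_0$ (here and below $g_0^k\notin N$ is meant for $k\neq0$, since $g_0^0=e\in N$), and record two elementary facts. \emph{Fact (a):} for any integer $k$, the assignment $(n,j)\mapsto nh^{j}$ is a homomorphism $N\times\mathbb{Z}\to\Gamma$ — it is multiplicative precisely because $N$ is central, so that $h^{j}$ commutes with every $n\in N$ — it restricts to the identity on $N\times\{0\}$, and it maps $(n,j)$ into $N$ iff $h^{j}\in N$. \emph{Fact (b):} there is a canonical isomorphism $\Gamma\ast_N(N\times\mathbb{Z})\cong\langle\Gamma,t\;|\;t^{-1}Nt=N,\;id\rangle$ carrying $t$ to $(e,1)$; indeed both groups are presented by the relations of $\Gamma$ together with the relations $[t,n]=1$ for $n\in N$, the remaining relations of $N\times\mathbb{Z}$ being consequences of those of $\Gamma$ since $N\le\Gamma$.

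For (i): under the hypothesis $h^{k}\notin N$ for all $k\neq0$, Fact (a) shows that $(n,j)\mapsto nh^{j}$ is injective, so it identifies $N\times\mathbb{Z}$ with the subgroup $\langle N,h\rangle$ of $\Gamma$, which contains $N$. By the standard fact that replacing one factor of an amalgam by a subgroup that contains the amalgamated subgroup yields a subgroup of the amalgam (a reduced word remains reduced), the induced homomorphism $\Gamma\ast_N\langle N,h\rangle\to\Gamma\ast_{N=N}\Gamma$ is injective; composing with the isomorphisms $\langle\Gamma,t\;|\;t^{-1}Nt=N,\;id\rangle\cong\Gamma\ast_N(N\times\mathbb{Z})\cong\Gamma\ast_N\langle N,h\rangle$ from Fact (b) gives the desired embedding.

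For (ii): as the HNN-extension is not RF, fix $g^{*}\neq e$ in it lying in the kernel of every homomorphism to a finite group. Using Fact (b), regard $g^{*}$ as an element of $\Gamma\ast_N(N\times\mathbb{Z})$ and write it in amalgam normal form $g^{*}=\gamma_{0}z_{1}\gamma_{1}\cdots z_{p}\gamma_{p}$ with $z_{i}=(n_{i},k_{i})\in(N\times\mathbb{Z})\setminus N$ (so $k_{i}\neq0$) and $\gamma_{i}\notin N$ for $1\le i\le p-1$; the case $p=0$ is immediate, so assume $p\ge1$ and set $K=\max_{i}|k_{i}|$. Apply the hypothesis to this $K$ to obtain $h=g_{0}$ with $h^{k}\notin N$ for $1\le|k|\le K$, and let $\mu\colon\Gamma\ast_N(N\times\mathbb{Z})\to\Gamma\ast_{N=N}\Gamma$ be the homomorphism that is the identity on the first copy of $\Gamma$ and sends $(n,j)\mapsto nh^{j}$ into the second (Fact (a)). Then each $\mu(z_{i})=n_{i}h^{k_{i}}\notin N$, so the word $\mu(g^{*})=\gamma_{0}(n_{1}h^{k_{1}})\gamma_{1}\cdots(n_{p}h^{k_{p}})\gamma_{p}$, after absorbing into the adjacent syllable any of the end terms $\gamma_{0},\gamma_{p}$ that lie in $N$, becomes a reduced alternating word of length $\ge1$ with all syllables outside $N$; hence $\mu(g^{*})\neq e$ by the normal form theorem for amalgamated products. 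Finally, for any homomorphism $\varphi$ of $\Gamma\ast_{N=N}\Gamma$ onto a finite group, $\varphi\circ\mu$ (composed with the isomorphism of Fact (b)) is a homomorphism of the HNN-extension onto a finite group, so it kills $g^{*}$; thus $\varphi(\mu(g^{*}))=e$. Therefore the nontrivial element $\mu(g^{*})$ is annihilated by every finite quotient of $\Gamma\ast_{N=N}\Gamma$, which is consequently not RF.

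The one delicate point is the verification, in (ii), that $\mu(g^{*})\neq e$: since $\mu$ need not be injective one cannot merely quote (i), and one must ensure that carrying the normal form of $g^{*}$ through $\mu$ causes no collapse. This is precisely where the quantitative hypothesis is used: choosing $g_{0}$ with $g_{0}^{k}\notin N$ for all $|k|\le K$, where $K$ is the largest $\mathbb{Z}$-exponent occurring in the normal form of $g^{*}$, forces every $\mu(z_{i})=n_{i}h^{k_{i}}$ to stay outside $N$, so that the normal form of $g^{*}$ is sent to a reduced — hence nontrivial — word of the amalgam.
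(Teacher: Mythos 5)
Your proof is correct and follows essentially the same route as the paper: both arguments come down to the homomorphism sending $\Gamma$ identically into the first copy and $t$ to $i_2(g_0)$ in the second copy, with the normal form theorem for amalgamated products showing no collapse, and with the quantitative choice of $g_0$ (bounding the $t$-exponents of the non-RF witness by $K$) doing the work in (ii). Your repackaging via the isomorphism $\langle \Gamma, t\;|\;t^{-1}Nt=N,\;id\rangle \cong \Gamma\ast_N(N\times\mathbb Z)$ is a pleasant cosmetic variant -- it makes well-definedness transparent and lets (i) follow from the standard subgroup-of-amalgam embedding instead of case-checking HNN reduced forms -- but it is not a genuinely different argument.
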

\begin{proof} Let $e\neq x\in \langle \Gamma, t\;|\; t^{-1}Nt = N, \; id\rangle$. One can reduce $x$ to one of the forms

$1) x = g_1t^{k_1}g_2t^{k_2}\ldots g_nt^{k_n},$ where $g_i \notin N, 0\neq k_i\in \mathbb Z, n\ge 2$,

$2) x = t^{k_1}g_2t^{k_2}\ldots g_nt^{k_n},$ where $g_i \notin N, 0\neq k_i\in \mathbb Z, n\ge 2$,

$3) x = g_1t^{k_1}g_2t^{k_2}\ldots g_n,$ where $g_i \notin N, 0\neq k_i\in \mathbb Z, n\ge 2$,

$4) x = t^{k_1}g_2t^{k_2}\ldots g_n,$ where $g_i \notin N, 0\neq k_i\in \mathbb Z, n\ge 2$,

$5) x = ht^{k},$ where $h\in N$, $0\neq k\in \mathbb Z$,

$6) x = h,$ where $h\in N$.

Let us denote the first (second, respectively)  copy of $\Gamma$ inside $\Gamma \ast_{N=N} \Gamma$  by $i_1(\Gamma)$ ($i_2(\Gamma)$, respectively). For any $g_0\in \Gamma$ we can define a homomorphism $$f: \langle \Gamma, t\;|\; t^{-1}Nt = N, \; id\rangle \to \Gamma \ast_{N=N} \Gamma$$ by $$f(t) = i_2(g_0), \; f(g) = i_1(g), \; g\in \Gamma.$$

Let now $g_0$ be as in the assumption (i) of the lemma. Then for any $e\neq x\in \langle \Gamma, t\;|\; t^{-1}Nt = N, \; id\rangle$ we have
$f(x)\neq e$. Indeed if $x$ is written in the form 5) or 6) then it is obvious, and if it is written in one the first 4 forms then it follows from the Normal Form Theorem for amalgamated free products. Thus $f$ is injective.

We now prove (ii). Since  $\langle \Gamma, t\;|\; t^{-1}Nt = N, \; id\rangle$ is not RF, there is $e\neq x\in \langle \Gamma, t\;|\; t^{-1}Nt = N, \; id\rangle$ that vanish under any homomorphism to a finite group. Let $K = \max |k_j|$ over all $k_j$'s appearing in the reduced form of $x$. By assumption we can choose $g_0\in \Gamma$ such that $g_0^k\notin N$, for any $-K \le k\le K$ and then as above we have $f(x)\neq e$.
 If $\Gamma \ast_{N=N} \Gamma$ was RF, there would be a homomorphism $\alpha$ from $\Gamma \ast_{N=N} \Gamma $ to a finite group such that $\alpha(f(x))\neq e$ and therefore $x$ would not vanish under the homomorphism $\alpha\circ f$, a contradiction.
\end{proof}

Recall the construction of Abels's group \cite{Abels}:
$$\Gamma = \left\{ \left(\begin{array}{cccc}1&x_{12}&x_{13}&x_{14}\\ 0 &p^k&x_{23}&x_{24}\\0&0&p^n&x_{34}\\0&0&0&1 \end{array}\right) : x_{ij}\in \mathbb Z\left[\frac{1}{p}\right] , k, n\in \mathbb Z\right\}.$$ Let
$$N = \left\{ \left(\begin{array}{cccc}1&0&0&x \\0&1&0&0\\ 0&0&1&0\\ 0&0&0&1 \end{array}\right) : x\in \mathbb Z \right\}.$$
It is easy to check that $N$ is a central subgroup of $\Gamma$. Abels showed in \cite{Abels} that $\Gamma$ and $\Gamma/N$ are finitely presented groups, $\Gamma$ is RF (and hence MAP), and $\Gamma/N$ is not RF.

\medskip

Now we can answer a question of Khan and Morris [\cite{KhanMorris}, p.428] of whether free product of MAP topological groups amalgamated over a common closed central subgroup is always MAP.

\begin{proposition}\label{Khan-Morris} Let $\Gamma$ and $N$  be as above. Then the amalgamated free product $\Gamma \ast_{N=N} \Gamma$ is not MAP.
\end{proposition}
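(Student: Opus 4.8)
The plan is to run the machine assembled in Lemmas~\ref{HNNnotRF} and \ref{HNNembeds} with Abels's group $\Gamma$ and its central subgroup $N$, converting the failure of residual finiteness of the associated HNN-extension into the failure of maximal almost periodicity of the amalgam. First I would record the input we need from Abels: $\Gamma/N$ is \emph{not} residually finite. Hence Lemma~\ref{HNNnotRF} applies directly and shows that the HNN-extension $\langle \Gamma, t\;|\; t^{-1}Nt = N,\; id\rangle$ is not RF.

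Next I would verify the power-genericity hypothesis required by Lemma~\ref{HNNembeds}(ii): for every $K$ there must exist $g_0\in\Gamma$ all of whose nonzero powers of exponent at most $K$ avoid $N$. In fact a single $g_0$ works for all $K$ simultaneously. Take $g_0$ to be the matrix that is the identity except for the $(2,2)$-entry, which is $p$; this lies in $\Gamma$ (the exponents are $k=1$, $n=0\in\mathbb Z$ and all the off-diagonal entries vanish). Its $m$-th power is again the identity except for the $(2,2)$-entry, which is $p^{m}$, so for $m\neq 0$ that entry is $\neq 1$, whereas every element of $N$ has $(2,2)$-entry equal to $1$. Thus $g_0^{m}\notin N$ for all $m\neq 0$, which is (more than) what Lemma~\ref{HNNembeds}(ii) asks for. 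Combining this with the previous step, Lemma~\ref{HNNembeds}(ii) yields that the amalgamated free product $\Gamma\ast_{N=N}\Gamma$ is not RF.

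Finally I would upgrade ``not RF'' to ``not MAP''. Since $\Gamma$ is finitely presented (Abels), hence finitely generated, and $N$ is infinite cyclic, the amalgam $\Gamma\ast_{N=N}\Gamma$ is finitely generated; so Proposition~\ref{MAPhenceRF} shows that if it were MAP it would be RF, contradicting the previous step. Therefore $\Gamma\ast_{N=N}\Gamma$ is not MAP. (A fortiori it is neither RF nor RFD, and, regarded as a locally compact group with $N$ a closed central subgroup, this answers the Khan--Morris question in the negative.)

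As for where the difficulty lies: essentially all the substantive work is already carried out in Lemmas~\ref{HNNnotRF}, \ref{HNNembeds} and in Abels's construction, so the only points needing care are (i) choosing $g_0$ so that $N$, which consists exactly of matrices with trivial diagonal, cannot absorb any nonzero power of it --- immediate once one inspects the $(2,2)$-entry --- and (ii) making sure finite generation is in place so that the passage from non-residual-finiteness to non-MAP through Proposition~\ref{MAPhenceRF} is legitimate. The slight discrepancy between the literal range $-|K|\le k\le|K|$ and the intended range of \emph{nonzero} exponents in the hypothesis of Lemma~\ref{HNNembeds}(ii) is harmless here, since our $g_0$ satisfies $g_0^{k}\notin N$ for every $k\neq 0$.
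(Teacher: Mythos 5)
Your argument is correct and is essentially the paper's own proof: deduce from Abels's result and Lemma~\ref{HNNnotRF} that the HNN-extension is not RF, embed it into $\Gamma\ast_{N=N}\Gamma$ via Lemma~\ref{HNNembeds}(ii) using a diagonal element whose nonzero powers avoid $N$, and then pass from non-RF to non-MAP by finite generation and Proposition~\ref{MAPhenceRF}. The only difference is your choice $g_0=\mathrm{diag}(1,p,1,1)$ instead of the paper's $\mathrm{diag}(1,p,p,1)$, which is immaterial, and your remark that the exponent condition is only needed for $k\neq 0$ is a sensible reading of the lemma.
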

\begin{proof} Since $\Gamma/N$ is not RF (\cite{Abels}), by Lemma \ref{HNNnotRF} the HNN-extension $\langle \Gamma, t\;|\; t^{-1}Nt = N, \; id\rangle$ is not RF. Let $g_0 = diag(1, p, p, 1)\in \Gamma$. Then $g_0^k\notin N$ for any $k\in \mathbb Z$. By Lemma \ref{HNNembeds} the amalgamated free product $\Gamma \ast_{N=N} \Gamma$ is not RF. Since $\Gamma$ is finitely generated, so is $\Gamma \ast_{N=N} \Gamma$. Hence  $\Gamma \ast_{N=N} \Gamma$ is not MAP by Proposition \ref{MAPhenceRF}.
\end{proof}

Since $\Gamma$ is solvable (\cite{Abels}) (hence amenable)  and MAP, the C*-algebra $C^*(\Gamma)$ is RFD by \cite{BekkaLouvet}.  Since $C^*\left(\Gamma \ast_{N=N} \Gamma \right) = C^*(\Gamma)\ast_{C^*(N)} C^*(\Gamma)$, we conclude

\begin{corollary} Central amalgamated free products of RFD C*-algebras need not be RFD.
\end{corollary}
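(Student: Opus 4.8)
The plan is to exhibit an explicit counterexample, and in fact all the pieces needed are already assembled above, so the work is mostly one of bookkeeping. I would take $\Gamma$ to be Abels's group and $N$ its central subgroup as defined just before Proposition~\ref{Khan-Morris}. The first thing to record is that $C^*(\Gamma)$ is RFD: by Abels, $\Gamma$ is solvable, hence amenable, and $\Gamma$ is MAP; for amenable groups MAP and RFD coincide by \cite{BekkaLouvet}, so $C^*(\Gamma)$ is RFD. Next I would note that since $N$ is central in $\Gamma$, the C*-subalgebra $C^*(N)\subseteq C^*(\Gamma)$ is central, so that $C^*(\Gamma)\ast_{C^*(N)}C^*(\Gamma)$ is genuinely a \emph{central} amalgamated free product of RFD C*-algebras.

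The remaining step is to identify this amalgam group-theoretically and invoke non-MAPness. Using the identity $C^*(G_1\ast_C G_2)=C^*(G_1)\ast_{C^*(C)}C^*(G_2)$ recalled in Section~2 (here with $G_1=G_2=\Gamma$ and $C=N$), we have
$$C^*(\Gamma)\ast_{C^*(N)}C^*(\Gamma)=C^*\!\left(\Gamma\ast_{N=N}\Gamma\right).$$
Now Proposition~\ref{Khan-Morris} says $\Gamma\ast_{N=N}\Gamma$ is not MAP. Since a group always embeds in its full C*-algebra, a C*-algebra of a non-MAP group cannot be RFD: if finite-dimensional representations of $C^*(\Gamma\ast_{N=N}\Gamma)$ separated points, their restrictions to the group would separate points of $\Gamma\ast_{N=N}\Gamma$, contradicting non-MAPness. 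Hence $C^*(\Gamma)\ast_{C^*(N)}C^*(\Gamma)$ is not RFD, which is exactly the assertion.

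The genuine content is therefore hidden inside Proposition~\ref{Khan-Morris}, and that is the step I would flag as the real obstacle; everything else is soft. That proposition in turn rests on two inputs established earlier: first, Lemma~\ref{HNNnotRF}, which upgrades the failure of residual finiteness of $\Gamma/N$ (Abels) to the failure of RF of the HNN-extension $\langle\Gamma,t\mid t^{-1}Nt=N,\ id\rangle$ via \cite{Wong} and Proposition~\ref{QuotientRF0}; and second, Lemma~\ref{HNNembeds}(ii), which transfers non-RF of the HNN-extension to non-RF of the amalgamated free product $\Gamma\ast_{N=N}\Gamma$ once one produces, for each bound $K$, an element $g_0\in\Gamma$ with $g_0^k\notin N$ for $|k|\le K$ --- here the single element $g_0=\mathrm{diag}(1,p,p,1)$ works for all $K$ at once, since no nonzero power of it lands in $N$. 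Finite generation of $\Gamma$ (again Abels) then makes $\Gamma\ast_{N=N}\Gamma$ finitely generated, so Proposition~\ref{MAPhenceRF} converts non-RF into non-MAP. Assembling these, the corollary follows with no further computation.
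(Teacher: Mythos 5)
Your proposal is correct and follows essentially the same route as the paper: Abels's group $\Gamma$ with its central subgroup $N$, the identification $C^*(\Gamma)\ast_{C^*(N)}C^*(\Gamma)=C^*(\Gamma\ast_{N=N}\Gamma)$, RFD-ness of $C^*(\Gamma)$ via Bekka--Louvet, and non-MAPness of $\Gamma\ast_{N=N}\Gamma$ from Proposition~\ref{Khan-Morris} (itself built on Lemmas~\ref{HNNnotRF} and~\ref{HNNembeds} exactly as you describe). No gaps; this matches the paper's argument.
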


\begin{remark}\label{RFnotenough} Proposition \ref{Khan-Morris}, together with the strategy of the proof of Theorem \ref{main}, shows that our refined approximation of characters as in Theorem \ref{TracesInducedFromCenter}  and Corollary \ref{CorollaryTracesInducedFromCenter} does not hold if one assumes only RF instead of the existence of (compatible) filtrations. \end{remark}

\end{document}